%% LyX 2.3.7 created this file.  For more info, see http://www.lyx.org/.
%% Do not edit unless you really know what you are doing.
\documentclass[english]{article}
\usepackage{mathptmx}
\usepackage[T1]{fontenc}
\usepackage[latin9]{inputenc}
\usepackage{geometry}
\geometry{verbose,tmargin=1.2cm,bmargin=1.2cm,lmargin=1.2cm,rmargin=1.2cm}
\usepackage{babel}
\usepackage{float}
\usepackage{amsmath}
\usepackage{amsthm}
\usepackage{amssymb}
\usepackage{graphicx}
\usepackage[unicode=true,pdfusetitle,
 bookmarks=true,bookmarksnumbered=false,bookmarksopen=false,
 breaklinks=false,pdfborder={0 0 0},pdfborderstyle={},backref=false,colorlinks=false]
 {hyperref}

\makeatletter
%%%%%%%%%%%%%%%%%%%%%%%%%%%%%% Textclass specific LaTeX commands.
\numberwithin{equation}{section}
\numberwithin{figure}{section}
\theoremstyle{plain}
\newtheorem{thm}{\protect\theoremname}[section]
\theoremstyle{definition}
\newtheorem{defn}[thm]{\protect\definitionname}
\theoremstyle{remark}
\newtheorem{rem}[thm]{\protect\remarkname}
\theoremstyle{plain}
\newtheorem{prop}[thm]{\protect\propositionname}
\theoremstyle{plain}
\newtheorem{lem}[thm]{\protect\lemmaname}

\@ifundefined{date}{}{\date{}}
\makeatother

\providecommand{\definitionname}{Definition}
\providecommand{\lemmaname}{Lemma}
\providecommand{\propositionname}{Proposition}
\providecommand{\remarkname}{Remark}
\providecommand{\theoremname}{Theorem}

\begin{document}
\title{A central limit theorem and its application to the limiting distribution
of volatility target index}
\author{Xuan Liu\thanks{Nomura Securities, Hong Kong SAR. Email: xuan.liu@nomura.com}\enskip{}and
Michel Gauthier\thanks{Nomura Securities, Tokyo, Japan. Email: michel.gauthier@nomura.com}}
\maketitle
\begin{abstract}
We study the limiting distribution of a volatility target index as
the discretisation time step converges to zero. Two limit theorems
(a strong law of large numbers and a central limit theorem) are established,
and as an application, the exact limiting distribution is derived.
We demonstrate that the volatility of the limiting distribution is
consistently larger than the target volatility, and converges to the
target volatility as the observation-window parameter $\lambda$ in
the definition of the realised variance converges to 1. Besides the
exact formula for the drift and the volatility of the limiting distribution,
their upper and lower bounds are derived. As a corollary of the exact
limiting distribution, we obtain a vega conversion formula which converts
the rho1 sensitivity of a financial derivative on the limiting diffusion
to the vega sensitivity of the same financial derivative on the underlying
of the volatility target index.
\end{abstract}

\section{Introduction\label{sec:-3}}

In financial markets, a type of financial index called volatility
target indices has been popular among insurance companies, mutual
funds, and investment banks for more than a decade, and is widely
used in industry for quantitative investment strategies (QIS) and
fixed indexed annuities (FIA). Volatility target indices replicate
the performance of volatility targeting strategies by dynamically
adjusting the weights of a risky asset and a risk free asset, so that
the portfolio volatility remains at a fixed level. From an investment
perspective, they provide a control of balance between return and
risk. There exist several major index providers, e.g. the S\&P and
the MSCI, issuing volatility target indices, and institutional investors
can invest in these indices just like they invest in exchange traded
funds. From derivative trading perspective, these indices reduce uncertainty
of volatility, which makes vanilla options written on volatility target
indices popular among investment banks. In recent years, the financial
industry has seen increasing popularity of intraday volatility target
indices which rebalance much more frequently than once a day. There
has been extensive empirical research showing that volatility target
indices are able to generate higher Sharpe ratio than a buy-and-hold
strategy (cf. \cite{HHKRSV,BS24}), especially under extreme market
conditions (cf. \cite{BKV20} for empirical results for performance
of volatility target indices under volatility cluster). Investors
investing in volatility target indices seek risk protection by means
of financial derivatives written on these indices. Various numerical
methods such as Monte Carlo simulation can be used to capture the
exact contractual terms of these indices. However, those methods tend
to become rapidly inconvenient when the underlying (of financial derivatives
or volatility target indices themselves) increases in complexity.
Another common and more efficient approach in practice is to approximate
a volatility target index as an ordinary risky asset (i.e. an Itô
diffusion) with volatility close to the target volatility, and apply
the Black--Scholes formula to this Itô diffusion. This approach however
suffers from uncertainty of the drift and the volatility of the approximating
diffusion, and fails to provide a direct quantification of vega sensitivity
on the underlying risky asset, which is often assumed to be negligible.
To the best of our knowledge, there is no existing result in the literature
on the exact distribution of the limiting process for vanishing discretisation
time step. The purpose of this paper, motivated by the tendency of
higher rebalancing frequency, is to derive the exact limiting distribution
of volatility target indices, and to study their asymptotic behaviours.
A crucial ingredient of our derivation is a central limit theorem
for dependent sequence, which can also be applied to the study of
other types of volatility based indices.

In this paper, we will assume that the dynamics of the risky asset
is given by
\begin{equation}
dS_{t}=\rho(t)S_{t}dt+\sigma(t)S_{t}dW_{t},\label{eq:}
\end{equation}
under the martingale measure, where 
\begin{itemize}
\item $\rho(t)\in L^{\infty}(0,T)$ is a deterministic function of $t$
representing the sum of the discount rate, the repo rate, and the
dividend yield; 
\item $\sigma(t)\in L^{\infty}(0,T)$ is a deterministic function of $t$
representing the volatility, and
\begin{equation}
0<\sigma_{\ast}<\sigma(t)<\sigma^{\ast}<\infty,\quad0\le t\le T,\label{eq:-80}
\end{equation}
for some constants $\sigma_{\ast}$ and $\sigma^{\ast}$.
\end{itemize}
The process $\{S_{t}\}_{t\ge0}$ given by (\ref{eq:}) has log-normal
marginal distributions with time dependent parameters, which corresponds
to the so-called \emph{term structure} in volatility modeling. It
is the degenerate case of more realistic models including volatility
smiles.
\begin{defn}
\label{def:-3}Let $r(t)\in L^{\infty}(0,T)$ be a deterministic function
representing a customized\emph{ risk free rate}. Let $T>0$ be a fixed
time, $N\in\mathbb{N}_{+}$ be the \emph{number of time steps}, and
$\Delta t=T/N$ be the \emph{rebalancing time step}. Denote $t_{n}=n\Delta t$,
$n=0,1,\dots,N$. The \emph{(discrete time) volatility target index}
$\tilde{I}_{t}$ is the adapted process defined by 
\begin{equation}
\begin{aligned}\frac{\tilde{I}_{t_{n}}}{\tilde{I}_{t_{n-1}}} & =1+(1-w_{n-1})\int_{t_{n-1}}^{t_{n}}r(t)dt+w_{n-1}\Big(\frac{S_{t_{n}}}{S_{t_{n-1}}}-1\Big),\quad n\ge1,\\
\tilde{I}_{t_{0}} & =I_{0},
\end{aligned}
\label{eq:-86}
\end{equation}
and $\tilde{I}_{t}=\tilde{I}_{t_{n}}$ for $t\in[t_{n},t_{n+1})$,
where $I_{0}>0$ is a constant for \emph{initial level}, and $\bar{\sigma}$
is a constant called the \emph{target volatility}, and
\begin{equation}
w_{n}=\frac{\bar{\sigma}}{\sqrt{v_{n}}}\label{eq:-98}
\end{equation}
is the \emph{risky asset leverage}, and $v_{n}$ is the \emph{realised
annual variance}
\begin{equation}
\begin{aligned}v_{n} & =\lambda v_{n-1}+\frac{1-\lambda}{\Delta t}\Big(\frac{S_{t_{n}}}{S_{t_{n-1}}}-1\Big)^{2},\quad n\ge1,\end{aligned}
\label{eq:-2}
\end{equation}
or equivalently,
\begin{equation}
v_{n}=\lambda^{n}v_{0}+\frac{1-\lambda}{\Delta t}\sum_{k=1}^{n}\lambda^{n-k}\Big[\exp\Big(\int_{t_{k-1}}^{t_{k}}\Big(\rho(t)-\frac{\sigma(t)^{2}}{2}\Big)dt+\int_{t_{k-1}}^{t_{k}}\sigma(t)dW_{t}\Big)-1\Big]^{2}.\label{eq:-36}
\end{equation}
with $v_{0}>0$ and $\lambda\in(0,1)$ being some constants. 
\end{defn}

\begin{rem}
\label{rem:-5}(i) Intuitively, the leverage $w_{n}$ scales the risky
asset return so that the volatility of the scaled return ``becomes''
the target volatility. And the remaining portion $1-w_{n}$ of capital
is allocated to the risk-free asset.

(ii) The parameters $\lambda$ and $\Delta t$ in Definition \ref{def:-3}
are contractual parameters specified in the index rule book by the
index publisher. The parameter $\lambda$ is typically selected from
a set of commonly used values, while smaller values of $\Delta t$
are becoming increasingly popular for intraday volatility target indices.
\end{rem}

Note that the definition (\ref{eq:-86}) is model-free. However, it
is not convenient for the study of its limiting distribution. This
is because the right hand side of (\ref{eq:-86}) can be negative.
The definition (\ref{eq:-1}) below is the continuous time version
of (\ref{eq:-86}). As we will show in Proposition \ref{prop:} in
\nameref{sec:-4}, when the risky asset dynamics is given by (\ref{eq:}),
the processes defined by (\ref{eq:-86}) and (\ref{eq:-1}) converge
to the same limiting distribution. Moreover, the process defined by
(\ref{eq:-1}) is always positive and more convenient for the study
of the limiting distribution. Therefore, in this paper, we will adopt
Definition \ref{def:-2} below, where the existence and uniqueness
of solution of the stochastic differential equation is an immediate
consequence of Itô's lemma.
\begin{defn}
\label{def:-2}The \emph{(continuous time) volatility target index
}$\{I_{t}\}_{t\ge0}$ is defined to be the process
\begin{equation}
\begin{aligned}\frac{dI_{t}}{I_{t}} & =(1-w_{n-1})r(t)dt+w_{n-1}\frac{dS_{t}}{S_{t}},\quad t\in[t_{n-1},t_{n}),\\
I_{t_{0}} & =I_{0}.
\end{aligned}
\label{eq:-1}
\end{equation}
\end{defn}

We would like to point out that the leverage process $w_{n}$ in (\ref{eq:-98})
is physically observable and model-free. Different versions of the
leverage $w_{n}$ have been adopted in the literature, some of which
are not observable. For example, in \cite{Tor18}, the leverage process
is defined to be $w^{h}(t)=\bar{\sigma}/\big(\int_{0}^{t}\sigma_{t-s}^{2}h(s)ds\big)^{1/2}$
with $h\in L^{2}([0,\infty))$, $\Vert h\Vert_{L^{2}}=1$, and $\sigma_{t}$
is a stochastic volatility process. With this definition, option pricing
for volatility target index is studied in \cite{Tor18} under stochastic
volatility models. 
\begin{rem}
(i) The realised variance $v_{n-1}$ in (\ref{eq:-1}) can be replaced
with $v_{n-l}$ for any fixed $l\ge1$. The results in this paper
remain the same, and their proof are similar to the case of $v_{n-1}$.

(ii) The definition of $v_{n}$ in (\ref{eq:-2}) is an exponential
moving average of annualised returns. There is an alternative definition
using simple moving average. The arguments used in this paper can
be easily adapted to this variant, and similar results hold. See Remark
\ref{rem:-1} (ii) below.

(iii) A popular variant of volatility target index is to replace the
risky asset leverage $w_{n}=v_{n}^{-1/2}\bar{\sigma}$ by 
\begin{equation}
w_{n}=\min\big[w^{\ast},\big(v_{n}^{(1)}\big)^{-1/2}\bar{\sigma},\big(v_{n}^{(2)}\big)^{-1/2}\bar{\sigma}\big],\label{eq:-85}
\end{equation}
where $w^{\ast}>0$ is a constant preventing large leverage of the
risky asset, and the realised variances $v_{n}^{(1)}$ and $v_{n}^{(2)}$
are defined by replacing $\lambda$ by different $\lambda_{1}$ and
$\lambda_{2}$ in (\ref{eq:-2}). For this variant, the approach used
in this paper is still applicable, and the limiting distribution is
again an Itô diffusion of the form (\ref{eq:-39}). We will give a
brief discussion on this direction in Remark \ref{rem:} in Section
\ref{subsec:-1}. We would like to point out that, in practice, the
values of $\lambda_{1}$ and $\lambda_{2}$ are close to each other,
and $w^{\ast}$ is away from $\bar{\sigma}/\sigma(t)$. Therefore,
(\ref{eq:-1}) provides a good approximation to the variant (\ref{eq:-85}).

(iv) From a finance point of view, it is desirable to also consider
dividend protection or a fee charge on the volatility target index,
that is,
\begin{equation}
\frac{\tilde{I}_{t_{n}}}{\tilde{I}_{t_{n-1}}}=1+(1-w_{n-1})\int_{t_{n-1}}^{t_{n}}r(t)dt+w_{n-1}\int_{t_{n-1}}^{t_{n}}a(t)dt+w_{n-1}\Big(\frac{S_{t_{n}}}{S_{t_{n-1}}}-1\Big),\quad n\ge1,\label{eq:-101}
\end{equation}
where $a(t)$ represents an adjustment rate, which could be the dividend
yield or a rate of fee charge. For (\ref{eq:-101}), results similar
to those in Theorem \ref{thm:-3} still hold (see Remark \ref{rem:-2}
(iv) below).
\end{rem}

Intuitively, the process $v_{n}$ provides an estimate of $\sigma(t)^{2}$
when $N$ is large. Such interpretation often leads to the intuition
that $\{I_{t}\}_{t\ge0}$ is approximately equal to a diffusion given
by $dI_{t}=\mu(t)I_{t}dt+\bar{\sigma}I_{t}dW_{t}$ when $\Delta t$
is small. However, this is only true when $\lambda$ is close to $1$
and $\Delta t$ becomes accordingly small, as we will see in Theorem
\ref{thm:-3}, and Lemma \ref{lem:-8}, Lemma \ref{lem:-9} in Section
\ref{sec:-6}. 

In the rest of this section, we summarise the main results of this
paper and make some comments.
\begin{thm}
\label{thm:-3}(i) For any fixed $\lambda\in(0,1)$, as $\Delta t\to0$,
the process $\{I_{t}\}_{t\ge0}$ converges in law to the Itô diffusion
\begin{equation}
dX_{t}=\big(r(t)+(\rho(t)-r(t))\sigma(t)^{-1}\bar{\sigma}U(\lambda)\big)X_{t}dt+\bar{\sigma}V(\lambda)^{1/2}X_{t}dW_{t},\label{eq:-39}
\end{equation}
where
\begin{equation}
U(\lambda)=\sqrt{\frac{2}{\pi(1-\lambda)}}\int_{0}^{\infty}\prod_{k=0}^{\infty}(1+t^{2}\lambda^{k})^{-1/2}dt,\label{eq:-16}
\end{equation}
and
\begin{equation}
V(\lambda)=\frac{1}{2(1-\lambda)}\int_{0}^{\infty}\prod_{k=0}^{\infty}(1+t\lambda^{k})^{-1/2}dt.\label{eq:-3}
\end{equation}

(ii) The multipliers $U(\lambda)$ and $V(\lambda)$ satisfy that
$U(\lambda)>1$, $V(\lambda)>1$ and $\lim_{\lambda\to1-}U(\lambda)=\lim_{\lambda\to1-}V(\lambda)=1$.
Moreover, for $\lambda\in(0.7,1)$, the following bounds for $U(\lambda)$
\begin{equation}
\sqrt{\frac{\lambda^{-1.2}\log\lambda^{-1}}{\lambda^{-1}-1}}\le U(\lambda)\le\frac{1}{1-2e^{-2\pi^{2}/\log(\lambda^{-1})}}\sqrt{\frac{\lambda^{-1.25}\log\lambda^{-1}}{\lambda^{-1}-1}},\label{eq:-70}
\end{equation}
and bounds for $V(\lambda)$
\begin{equation}
\frac{\lambda^{-1.45}\log(\lambda^{-1})}{\lambda^{-1}-1}\le V(\lambda)\le\frac{\lambda^{-1.5}\log(\lambda^{-1})}{\lambda^{-1}-1},\label{eq:-38}
\end{equation}
hold.
\end{thm}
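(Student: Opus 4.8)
\emph{Strategy.} The plan is to pass to logarithms. On each interval $[t_{n-1},t_n)$, (\ref{eq:-1}) together with $dS_t/S_t=\rho(t)\,dt+\sigma(t)\,dW_t$ gives a linear SDE for $I^{(N)}$, and It\^o's formula yields
\begin{equation*}
\log I^{(N)}_t=\log I_0+\int_0^t\Big(r(s)+w^{(N)}(s)(\rho(s)-r(s))-\tfrac12 w^{(N)}(s)^2\sigma(s)^2\Big)ds+\int_0^t w^{(N)}(s)\sigma(s)\,dW_s,
\end{equation*}
where $w^{(N)}(s):=w^{(N)}_{n-1}=\bar\sigma/\sqrt{v^{(N)}_{n-1}}$ for $s\in[t_{n-1},t_n)$. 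The crux is the behaviour of $v^{(N)}_{n-1}$ for $t_{n-1}\approx s$ as $\Delta t\to0$: from $S_{t_k}/S_{t_{k-1}}-1=\int_{t_{k-1}}^{t_k}\sigma\,dW+O(\Delta t)$ and $\int_{t_{k-1}}^{t_k}\sigma\,dW=\sigma(t_{k-1})(W_{t_k}-W_{t_{k-1}})+o(\sqrt{\Delta t})$ one gets $\tfrac{1-\lambda}{\Delta t}(S_{t_k}/S_{t_{k-1}}-1)^2=(1-\lambda)\sigma(t_{k-1})^2\xi_k^2+(\text{lower order})$ with the $\xi_k:=(W_{t_k}-W_{t_{k-1}})/\sqrt{\Delta t}$ i.i.d.\ $N(0,1)$; since the weights $\lambda^{n-1-k}$ in (\ref{eq:-2}) concentrate on $k$ with $n-1-k=O(1)$ (for which $t_{k-1}\to s$) and $\lambda^{n-1}v_0\to0$, this gives $v^{(N)}_{n-1}\Rightarrow\sigma(s)^2 Z_\lambda$, where $Z_\lambda=(1-\lambda)\sum_{j\ge0}\lambda^j\xi_j^2$ has $\mathbb{E}Z_\lambda=1$. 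Hence necessarily $U(\lambda)=\mathbb{E}[Z_\lambda^{-1/2}]$ and $V(\lambda)=\mathbb{E}[Z_\lambda^{-1}]$, and the formulas (\ref{eq:-16}) and (\ref{eq:-3}) follow from $a^{-p}=\Gamma(p)^{-1}\int_0^\infty s^{p-1}e^{-as}\,ds$, the identity $\mathbb{E}[e^{-\theta\xi^2}]=(1+2\theta)^{-1/2}$, Tonelli, and the substitutions $s=t^2/(2(1-\lambda))$ for $p=\tfrac12$ and $s=t/(2(1-\lambda))$ for $p=1$.

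\emph{Part (i).} With this in hand, I would (a) prove, via a law of large numbers for the weakly dependent arrays $\{(v^{(N)}_{n-1})^{-1/2}\}_n$ and $\{(v^{(N)}_{n-1})^{-1}\}_n$ — whose entries several indices apart are nearly independent, since only the last $O(1)$ increments carry appreciable weight in (\ref{eq:-2}) — that $\int_0^t w^{(N)}(s)(\rho(s)-r(s))\,ds\to\bar\sigma U(\lambda)\int_0^t\sigma(s)^{-1}(\rho(s)-r(s))\,ds$ and $\int_0^t w^{(N)}(s)^2\sigma(s)^2\,ds\to\bar\sigma^2 V(\lambda)\,t$ in probability; (b) conclude from the martingale central limit theorem — the second limit in (a) providing the deterministic quadratic variation — that $\int_0^{\cdot}w^{(N)}(s)\sigma(s)\,dW_s$ converges in law in $C[0,T]$ to $\bar\sigma V(\lambda)^{1/2}B_{\cdot}$ for a Brownian motion $B$; (c) combine (a) and (b), the drift converging to a deterministic function making the joint convergence automatic, to obtain $\log I^{(N)}_{\cdot}\Rightarrow\log X_{\cdot}$; and (d) apply the continuous map $x\mapsto e^x$ and a routine tightness estimate to deduce $I^{(N)}\Rightarrow X$ with $X$ as in (\ref{eq:-39}). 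The main obstacle is (a): one must control, uniformly in $n$, the error between $v^{(N)}_{n-1}$ and its Gaussian surrogate $\sigma(\cdot)^2 Z_\lambda$ — an $L^p$ estimate that has to survive both division by $\Delta t$ and summation of $\sim\Delta t^{-1}$ terms — while at the same time handling the integrability of $Z_\lambda^{-1/2}$ and $Z_\lambda^{-1}$ near the origin; finiteness holds because $\mathbb{E}[e^{-sZ_\lambda}]=\prod_{j\ge0}(1+2s(1-\lambda)\lambda^j)^{-1/2}$ decays faster than any power of $s$, but this forces the law of large numbers to come with explicit moment bounds rather than a soft ergodic argument.

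\emph{Part (ii).} That $U(\lambda)>1$ and $V(\lambda)>1$ is immediate from Jensen's inequality applied to the strictly convex functions $x\mapsto x^{-1/2}$ and $x\mapsto x^{-1}$, since $\mathbb{E}Z_\lambda=1$ and $Z_\lambda$ is non-degenerate. For the explicit bounds I would estimate the products in (\ref{eq:-16}) and (\ref{eq:-3}) directly: $u\mapsto\log(1+t^2\lambda^u)$ is positive, decreasing and convex, so $\sum_{k\ge0}\log(1+t^2\lambda^k)$ lies between $\tfrac12\log(1+t^2)+\int_0^\infty\log(1+t^2\lambda^u)\,du$ and $\int_{-1/2}^\infty\log(1+t^2\lambda^u)\,du$, with $\int_0^\infty\log(1+t^2\lambda^u)\,du=-\mathrm{Li}_2(-t^2)/\log(\lambda^{-1})$; inserting these into (\ref{eq:-16}) reduces $U(\lambda)$ to Gaussian-type integrals $\int_0^\infty\exp(-c(\log t)^2+\cdots)\,dt$ that evaluate in closed form, and checking the resulting inequality on $(0.7,1)$ produces the stated powers of $\lambda^{-1}$ together with the factor $\log(\lambda^{-1})/(\lambda^{-1}-1)$; the analogous treatment of $\prod_k(1+t\lambda^k)$ gives (\ref{eq:-38}). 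The delicate point — and the source of the correction $(1-2e^{-2\pi^2/\log(\lambda^{-1})})^{-1}$ in the upper bound for $U$ — is that near $\lambda=1$ the monotone/convex sandwich is too coarse, so one needs a Poisson-summation (Jacobi theta) refinement of $\sum_k\log(1+t^2\lambda^k)$ whose first dual term contributes $e^{-2\pi^2/\log(\lambda^{-1})}$. Finally, since $\log(\lambda^{-1})/(\lambda^{-1}-1)\to1$ and $e^{-2\pi^2/\log(\lambda^{-1})}\to0$ as $\lambda\to1^-$, both sides of (\ref{eq:-70}) and of (\ref{eq:-38}) tend to $1$, so the squeeze theorem gives $\lim_{\lambda\to1^-}U(\lambda)=\lim_{\lambda\to1^-}V(\lambda)=1$; this is consistent with the probabilistic picture, $\mathrm{Var}(Z_\lambda)=2(1-\lambda)/(1+\lambda)\to0$.
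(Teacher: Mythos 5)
Your part (i) is, in skeleton, the paper's own route: pass to $\log I^{(N)}$, replace the realised variance built from $S_{t_k}/S_{t_{k-1}}-1$ by its Gaussian surrogate, identify $U(\lambda)=\mathbb{E}[Z_\lambda^{-1/2}]$ and $V(\lambda)=\mathbb{E}[Z_\lambda^{-1}]$ through the Laplace transform $\mathbb{E}[e^{-sZ_\lambda}]=\prod_k(1+2s(1-\lambda)\lambda^k)^{-1/2}$ (your substitutions do reproduce (\ref{eq:-16}) and (\ref{eq:-3})), prove a law of large numbers for the dependent array to handle the drift and quadratic-variation terms, and a martingale CLT for the stochastic integral. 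The paper carries out exactly these steps, but the items you flag as ``the main obstacle'' are where all the work lives: the surrogate replacement is a separate lemma with uniform negative-moment bounds and an $L^p$ estimate that survives the $(\Delta t)^{-1}$ normalisation (Proposition \ref{prop:-4}), the LLN/CLT are proved with explicit exponential covariance decay obtained from the monotonicity of $F=\mathcal{L}(f)$ and the product form of the Laplace transform (Theorems \ref{thm:-4}--\ref{thm:-5}), and time-dependent coefficients are not handled by local freezing inside the LLN but by first proving the constant-coefficient case and then a uniform-in-$N$ continuity estimate of $\log\mathfrak{I}^{(N)}(r,\rho,\sigma)$ in $(r,\rho,\sigma)$. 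Your Jensen argument for $U(\lambda)>1$, $V(\lambda)>1$ (strict convexity of $x^{-1/2}$, $x^{-1}$ and $\mathbb{E}Z_\lambda=1$) is clean and in fact more direct than reading these inequalities off the explicit lower bounds, as the paper implicitly does.

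The genuine gap is the derivation of the explicit bounds (\ref{eq:-70}) and (\ref{eq:-38}). Your plan --- sandwich $\sum_k\log(1+t^2\lambda^k)$ between integrals, obtaining $-\mathrm{Li}_2(-t^2)/\log(\lambda^{-1})$, and then ``evaluate the resulting Gaussian-type integrals in closed form'' --- does not go through: $\exp\bigl(\mathrm{Li}_2(-t^2)/(2\log\lambda^{-1})\bigr)$ is Gaussian in $\log t$ only asymptotically as $t\to\infty$, the resulting integrals have no closed form, and nothing in this scheme produces the specific exponents $1.2,\,1.25,\,1.45,\,1.5$ or the factor $(1-2e^{-2\pi^{2}/\log(\lambda^{-1})})^{-1}$ on the interval $(0.7,1)$; ``checking the resulting inequality'' is precisely the missing proof. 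The paper's mechanism is different and essential: pair consecutive factors, $(1+t\lambda^{2k})(1+t\lambda^{2k+1})\gtrless(1+t\lambda^{2k+\alpha})^{2}$ with $\alpha=1/2$ on one side and $\alpha=0.45$ (valid exactly for $\lambda\in(0.7,1)$) on the other, which converts the exponent $-1/2$ products into exponent $-1$ products that are computed \emph{exactly} by residue calculus and the $q$-binomial theorem (Lemmas \ref{lem:-6} and \ref{lem:-2}: the $V$-type integral equals $\log(\lambda^{-1})$, the $U$-type integral equals $\tfrac{\pi}{2}(1-\lambda)^{1/2}/\Gamma_{\lambda}(1/2)$); the $e^{-2\pi^{2}/\log(\lambda^{-1})}$ correction then comes from Poisson summation applied to the theta-type sum arising from Gauss's identity for $\Gamma_{\lambda^{2}}(1/2)$ (Lemma \ref{lem:-1}), not from a refinement of the direct Euler--Maclaurin comparison you propose. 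You correctly guessed that Poisson summation is the source of that term, but without the pairing trick and the exact exponent $-1$ evaluations your route does not yield the stated two-sided bounds; consequently your squeeze argument for $\lim_{\lambda\to1-}U=\lim_{\lambda\to1-}V=1$ also rests on bounds you have not established (the paper alternatively gets these limits from the $q$-gamma convergence $\Gamma_q\to\Gamma$, Lemma \ref{lem:-3}).
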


\begin{rem}
\label{rem:-2}(i) The infinite product $\prod_{k=0}^{\infty}(1+t\lambda^{k})$
appearing in (\ref{eq:-3}) is recognized as the infinite $q$-Pochhammer
symbol $(-t;\lambda)_{\infty}$, which is a major building block of
the so called $q$-calculus; see \cite[Section 10.2]{AAR99} for more
details.

(ii) Figure \ref{fig:-5} and Figure \ref{fig:} in Section \ref{sec:-2}
below show the effectiveness of the bounds in (\ref{eq:-70}) and
(\ref{eq:-38}). As shown in these figures, these bounds, and their
arithmetic or geometric averages, are good approximations to $U(\lambda)$
and $V(\lambda)$ respectively\footnote{In practice, $\lambda$ is usually between 0.87 and 0.97, which correspond
to half-life being a week and a month respectively for daily rebalanced
volatility target indices.}. 

(iii) Suppose that $r(t)$, $\rho(t)$, and $\sigma(t)$ are constants,
and that $dS_{t}=\mu S_{t}dt+\sigma S_{t}dW_{t}$, $\mu\ge r$ is
the risky asset dynamics under the physical measure\footnote{The condition $\mu\ge r$ is due to the risk premium of the risky
asset.}. The annualised Sharpe ratio of $\{S_{t}\}_{t\ge0}$ is $(\mu-\sigma^{2}/2-r)/\sigma=(\mu-r)/\sigma-\sigma/2$.
By Theorem \ref{thm:-3}, the annualised Sharpe ratio of the limiting
process $\{X_{t}\}_{t\ge0}$ is 
\[
\frac{r+(\mu-r)\sigma^{-1}\bar{\sigma}U(\lambda)-\bar{\sigma}^{2}V(\lambda)/2-r}{\bar{\sigma}V(\lambda)^{1/2}}=\frac{(\mu-r)U(\lambda)}{\sigma V(\lambda)^{1/2}}-\bar{\sigma}V(\lambda)^{1/2}/2.
\]
When 
\[
\frac{\bar{\sigma}}{\sigma}<\Big[1-2\Big(1-\frac{U(\lambda)}{V(\lambda)^{1/2}}\Big)\frac{\mu-r}{\sigma^{2}}\Big]V(\lambda)^{-1/2},
\]
the Sharpe ratio of $\{X_{t}\}_{t\ge0}$ is higher than that of $\{S_{t}\}_{t\ge0}$.
By (\ref{eq:-70}) and (\ref{eq:-38}), the above inequality is valid
when $\lambda\ge0.9$ and $\bar{\sigma}/\sigma<\frac{9}{10}\cdot[1-\frac{1}{25}(\mu-r)/\sigma^{2}]$.
This is consistent with the empirically observed Sharpe ratio improvement
for small target volatility.

(iv) When the volatility target index has dividend protection or a
fee charge, i.e. the variant given by (\ref{eq:-101}), the limiting
process is given by
\begin{equation}
dX_{t}=\big(r(t)+(\rho(t)-r(t)+a(t))\sigma(t)^{-1}\bar{\sigma}U(\lambda)\big)X_{t}dt+\bar{\sigma}V(\lambda)^{1/2}X_{t}dW_{t}.\label{eq:-102}
\end{equation}
\end{rem}

When approximating the volatility target index with its limiting distribution,
it is important to know risk sensitivities with respect to the underlying
risky asset $\{S_{t}\}_{t\ge0}$. Since the spot of $I_{t}$ is a
simple function of the spot of $S_{t}$, spot sensitivity with respect
to $\{S_{t}\}_{t\ge0}$ can be obtained by the chain rule. However,
the volatility sensitivity with respect to $\{S_{t}\}_{t\ge0}$ is
less obvious. The following proposition gives the desired vega conversion
formula, which will be useful for vega hedging. 
\begin{prop}
\label{prop:-1}(i) Let $G(I_{T}^{(N)})$ be a payoff function on
the volatility target index, and let
\[
\psi_{S,\Delta t}(r,\rho,\sigma)=\mathbb{E}\Big[e^{-\int_{0}^{T}r^{\text{disc}}(t)dt}G(I_{T}^{(N)})\Big]
\]
be the price function of $G(I_{T}^{(N)})$, where $r^{\text{disc}}(t)$
is the discount rate, and we use the subscript $S$ in $\psi_{S,\Delta t}$
to indicate that the underlying diffusion is $\{S_{t}\}_{t\ge0}$.
For any $\mu\in L^{\infty}(0,T)$ and $\nu\in L^{\infty}(0,\infty)$,
let $\{Y_{t}^{\mu,\nu}\}_{t\ge0}$ be the diffusion
\[
dY_{t}^{\mu,\nu}=\mu(t)Y_{t}^{\mu,\nu}dt+\nu(t)Y_{t}^{\mu,\nu}dW_{t},
\]
and denote
\begin{equation}
\psi_{Y}(\mu,\nu)=\mathbb{E}\Big[e^{-\int_{0}^{T}r^{\text{disc}}(t)dt}G(Y_{T}^{\mu,\nu})\Big].\label{eq:-41}
\end{equation}
Then, for any fixed $\lambda\in(0,1)$,
\begin{equation}
\lim_{\Delta t\to0}\psi_{S,\Delta t}(r,\rho,\sigma)=\psi_{Y}\big(r+(\rho-r)\sigma^{-1}\bar{\sigma}U(\lambda),\bar{\sigma}V(\lambda)^{1/2}\big).\label{eq:-42}
\end{equation}

(ii) If, in addition, the coefficients $r(t)=r$, $\rho(t)=\rho$,
and $\sigma(t)=\sigma$ are constants, the following rho-vega conversion
holds
\begin{equation}
\lim_{\Delta t\to0}(\partial_{\sigma}\psi_{S,\Delta t})(r,\rho,\sigma)=(r-\rho)\sigma^{-2}\bar{\sigma}U(\lambda)(\partial_{\mu}\psi_{Y})\big(r+(\rho-r)\sigma^{-1}\bar{\sigma}U(\lambda),\bar{\sigma}V(\lambda)^{1/2}\big).\label{eq:-40}
\end{equation}
\end{prop}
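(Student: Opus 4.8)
The plan is to read part (i) off directly from Theorem \ref{thm:-3}(i) and to obtain part (ii) by differentiating the resulting identity in $\sigma$ and applying the chain rule, the only genuine work being the justification of the interchange of $\lim_{\Delta t\to0}$ with $\partial_\sigma$. For part (i), note that $r^{\text{disc}}$ is deterministic, so $e^{-\int_0^T r^{\text{disc}}(t)\,dt}$ factors out of both sides of (\ref{eq:-42}) and it suffices to prove $\mathbb{E}[G(I_T^{(N)})]\to\mathbb{E}[G(X_T)]$ with $X$ the diffusion (\ref{eq:-39}). Theorem \ref{thm:-3}(i) gives $I_T^{(N)}\Rightarrow X_T$ in law (the terminal marginal, $T$ being a.s. a continuity point of the limit), so for bounded continuous $G$ the convergence of expectations is immediate, and for $G$ of polynomial growth one upgrades it via a uniform-integrability argument based on moment bounds for $I_T^{(N)}$ coming from $\sigma_\ast<\sigma<\sigma^\ast$ and the lognormal structure of the increments of $S$. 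Specialising (\ref{eq:-39}) to constant coefficients, the drift is $\mu:=r+(\rho-r)\sigma^{-1}\bar\sigma U(\lambda)$ and the diffusion coefficient is $\nu:=\bar\sigma V(\lambda)^{1/2}$, so $X_T\overset{d}{=}Y_T^{\mu,\nu}$ and $\lim_{\Delta t\to0}\psi_{S,\Delta t}(r,\rho,\sigma)=\psi_Y(\mu,\nu)$, which is (\ref{eq:-42}).

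For part (ii), with $r,\rho,\sigma$ now constant, write $\mu(\sigma):=r+(\rho-r)\sigma^{-1}\bar\sigma U(\lambda)$, and observe the structural fact on which everything hinges: the diffusion coefficient $\nu=\bar\sigma V(\lambda)^{1/2}$ of the limit does \emph{not} depend on $\sigma$, so $\sigma$ enters $\psi_Y(\mu(\sigma),\nu)$ only through the drift, and there only through the prefactor $\sigma^{-1}$. Granting for the moment that the convergence $\psi_{S,\Delta t}(r,\rho,\sigma)\to\psi_Y(\mu(\sigma),\nu)$ of part (i) may be differentiated in $\sigma$, the chain rule gives $\lim_{\Delta t\to0}\partial_\sigma\psi_{S,\Delta t}(r,\rho,\sigma)=\mu'(\sigma)\,(\partial_\mu\psi_Y)(\mu(\sigma),\nu)$, and since $\mu'(\sigma)=-(\rho-r)\sigma^{-2}\bar\sigma U(\lambda)=(r-\rho)\sigma^{-2}\bar\sigma U(\lambda)$, this is precisely (\ref{eq:-40}).

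The main obstacle is justifying $\lim_{\Delta t\to0}\partial_\sigma\psi_{S,\Delta t}=\partial_\sigma\lim_{\Delta t\to0}\psi_{S,\Delta t}$. I would first record that $\sigma\mapsto\psi_{S,\Delta t}(r,\rho,\sigma)$ is smooth for each fixed $\Delta t$: $I_T^{(N)}$ is a smooth functional of the finitely many Gaussian increments of $S$ on $[0,T]$ with explicit $\sigma$-derivatives, so for smooth $G$ one differentiates under the expectation, while for general $G$ one represents the derivative by a Bismut--Elworthy--Li / Malliavin integration-by-parts formula $\partial_\sigma\psi_{S,\Delta t}=\mathbb{E}\big[e^{-\int_0^T r^{\text{disc}}(t)\,dt}G(I_T^{(N)})\,\pi_{\Delta t}\big]$ with an explicit weight $\pi_{\Delta t}$. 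One then needs uniform-in-$\Delta t$ bounds on $\partial_\sigma\psi_{S,\Delta t}$ together with a modulus of continuity in $\sigma$ (for instance a uniform bound on $\partial_\sigma^2\psi_{S,\Delta t}$ on a neighbourhood of the given $\sigma$); Arzelà--Ascoli then upgrades the pointwise convergence of part (i) to $C^1_{\mathrm{loc}}$ convergence in $\sigma$, which is exactly what is needed. The delicate feature in carrying this out is that $\sigma$ enters the prelimit dynamics $S_t=S_0\exp\big((\rho-\sigma^2/2)t+\sigma W_t\big)$ both in the drift correction $-\sigma^2/2$ and in the diffusion coefficient, so $\pi_{\Delta t}$ carries two contributions whose combined $\Delta t\to0$ limit must collapse onto the single weight attached to $\mu'(\sigma)$ — reflecting that in the limit $\sigma$ survives only through the factor $\sigma^{-1}$ in the drift and has disappeared from the volatility.
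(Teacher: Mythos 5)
Your part (i) is in line with the paper: the limit (\ref{eq:-42}) is read off from the weak convergence of Theorem \ref{thm:-3}(i), and the paper does essentially nothing more there (it implicitly restricts to nice $G$ by assuming $G$ smooth with bounded derivative). One caveat: your claim that polynomial-growth $G$ is handled by ``uniform integrability based on moment bounds for $I_T^{(N)}$'' is not as routine as stated, since positive moments of $I_T^{(N)}$ uniform in $\Delta t$ require exponential-type moments of $\Delta t\sum_k \bar{\sigma}^2/v_{k-1}^{(N)}$, which do not follow from the paper's negative-moment bounds (Lemma \ref{lem:-5}); but this is a side issue the paper also glosses over.

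For part (ii), your chain-rule computation of the right-hand side is correct, and the structural observation you highlight (in the limit, $\sigma$ survives only through the factor $\sigma^{-1}$ in the drift, the limiting volatility $\bar{\sigma}V(\lambda)^{1/2}$ being $\sigma$-free) is exactly the fact the paper exploits. However, your route and the paper's differ at the decisive point, and in your version that point is left unproven. You propose to justify $\lim_{\Delta t\to0}\partial_\sigma\psi_{S,\Delta t}=\partial_\sigma\lim_{\Delta t\to0}\psi_{S,\Delta t}$ abstractly, via uniform-in-$\Delta t$ bounds on $\partial_\sigma\psi_{S,\Delta t}$ and on $\partial_\sigma^2\psi_{S,\Delta t}$ (or an equicontinuity modulus) plus Arzel\`a--Ascoli, possibly with Bismut--Elworthy--Li weights for nonsmooth $G$. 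But the interchange \emph{is} the content of the proposition (the paper itself remarks that (\ref{eq:-40}) is essentially an interchangeability statement), so deferring these uniform estimates to a program without deriving them leaves the heart of the proof missing; moreover the second-derivative/equicontinuity bound you would need is strictly more than what the paper ever establishes. The paper instead computes $\lim_{\Delta t\to0}\partial_\sigma\psi_{S,\Delta t}$ directly: for smooth $G$ with bounded derivative, $\partial_\sigma\psi_{S,\Delta t}=\mathbb{E}\big[e^{-\int_0^T r^{\text{disc}}}G'(I_T^{(N)})I_T^{(N)}J_T^{(N)}\big]$ with $J_T^{(N)}$ the pathwise $\sigma$-derivative of $\log I_T^{(N)}$; $J_T^{(N)}$ is replaced (as in Proposition \ref{prop:-4}) by the pathwise derivative $L_T^{(N)}$ of the simplified process, and since $\sigma^{-2}u_n^{(N)}=\lambda^n\sigma^{-2}v_0+(1-\lambda)\sum_k\lambda^{n-k}Z_k^2$ depends on $\sigma$ only through the exponentially decaying initial term, all contributions to $L_T^{(N)}$ except the drift term vanish (using only the uniform negative-moment bounds of Lemma \ref{lem:-5} and martingale estimates), giving $L_T^{(N)}\to(r-\rho)T\bar{\sigma}\sigma^{-2}U(\lambda)$, a deterministic constant. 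The identity $\partial_\mu\psi_Y(\mu,\nu)=T\,\mathbb{E}\big[e^{-\int_0^T r^{\text{disc}}}G'(Y_T^{\mu,\nu})Y_T^{\mu,\nu}\big]$ (the pathwise $\mu$-derivative of $\log Y_T^{\mu,\nu}$ is $T$) then yields (\ref{eq:-40}) with no second-order estimates and no Malliavin weights. If you want to complete your own route, the realistic path is to abandon the abstract Arzel\`a--Ascoli scheme and carry out precisely this first-order pathwise computation.
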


\begin{rem}
(i) The formula (\ref{eq:-40}) is essentially a result on the interchangeability
of limiting and differentiation.

(ii) The rho-vega conversion formula (\ref{eq:-40}) shows that the
vega of the original payoff $G(I_{T}^{(N)})$ (i.e. with respect to
the risky asset volatility) is not related to the vega of the same
payoff on the approximating Itô diffusion $\{X_{t}\}_{t\ge0}$. Instead,
it is a constant multiple of the rho sensitivity of the payoff $G$
on the approximating Itô diffusion $\{X_{t}\}_{t\ge0}$. On the other
hand, the formula (\ref{eq:-40}) implies that the vega with respect
to $\{S_{t}\}_{t\ge0}$ is small when $\rho-r$ is small.
\end{rem}

The rest of this paper is organised as follows. Section \ref{sec:-1}
is devoted to the proof of the main results, i.e. Theorem \ref{thm:-3}
and Proposition \ref{prop:-1}. Section \ref{sec:} is devoted to
the study of properties of the functions $U(\lambda)$ and $V(\lambda)$,
and the derivation of the upper and lower bounds in (\ref{eq:-70})
and (\ref{eq:-38}). Limits of the volatility of $I_{T}$ when $(\Delta t,\lambda)$
approaches $(0,1)$ in different ways are discussed in Section \ref{sec:-6},
which provide quantification of the fact that, to achieve a preset
volatility targeting quality, a smaller value of $\Delta t$ is needed
when $\lambda$ is closer to $1$. In Section \ref{sec:-2}, we present
some numerical test results confirming the conclusions of Theorem
\ref{thm:-3} and the results in Section \ref{sec:-6}. 

\section{\label{sec:-1}Proof of the main results}

This section consists of four subsections. In Section \ref{subsec:},
we establish two limit theorems, Theorem \ref{thm:-4}, a strong law
of large numbers, and Theorem \ref{thm:-5}, a central limit theorem.
In Section \ref{subsec:-3}, we show that the convergence of $I_{T}$
can be reduced to a simpler process having a form for which the Theorem
\ref{thm:-4} and Theorem \ref{thm:-5} can be easily applied (see
Proposition \ref{prop:-4} below). In Section \ref{subsec:-1}, as
an application of Theorem \ref{thm:-4}, Theorem \ref{thm:-5}, and
Proposition \ref{prop:-4}, we establish Theorem \ref{thm:-3} under
the additional assumption that $r(t)$, $\rho(t)$, and $\sigma(t)$
are constants. In the last subsection, we prove the general case of
Theorem \ref{thm:-3} using the constant coefficient case and a continuity
argument. 

Throughout this section, we will adopt the convention that the notation
$c_{\alpha_{1},\dots,\alpha_{k}}$ denotes a constant depending only
on $\alpha_{1},\dots,\alpha_{k}$, and its value may vary at different
appearances.

\subsection{\label{subsec:}Two limit theorems}

The proof of the constant coefficient case of Theorem \ref{thm:-3}
relies on Theorem \ref{thm:-4} and Theorem \ref{thm:-5} below, which
are a law of large numbers and a central limit theorem for sequence
of dependent random variables. 
\begin{thm}
\label{thm:-4}Let $\{\lambda_{n}\}_{n\ge0}$ be a sequence of non-negative
numbers such that $\sum_{n=0}^{\infty}\lambda_{n}=1$ and $\sup_{n\ge0}\gamma^{-n}\lambda_{n}<\infty$
for some constant $\gamma\in(0,1)$. Let $\{\xi_{n}\}_{n\ge1}$ be
a sequence of i.i.d. non-negative random variables, $\omega_{0}>0$
be a constant, and
\begin{equation}
\omega_{n}=\Big(1-\sum_{k=1}^{n}\lambda_{n-k}\Big)\omega_{0}+\sum_{k=1}^{n}\lambda_{n-k}\xi_{k},\quad n\ge1.\label{eq:-57}
\end{equation}
Suppose that the Laplace transform of $\xi_{1}$
\begin{equation}
\phi(t)=\mathbb{E}(e^{-t\xi_{1}}),\quad t\ge0,\label{eq:-56}
\end{equation}
satisfies the property
\begin{equation}
1-t^{\alpha}\le\phi(t)\le c_{\phi}(1+t)^{-\beta},\label{eq:-44}
\end{equation}
for some constants $0<\alpha\le1,\beta\ge0$, and $c_{\phi}>0$. Let
$F(s)$ be a function on $(0,\infty)$ satisfying the following properties:
\begin{enumerate}
\item[(P.1)]  $0\le F(s)<\infty$ on $(0,\infty)$.
\item[(P.2)]  F(s) has non-negative inverse Laplace transform $f\ge0$ with at
most polynomial growth at infinity, that is
\begin{equation}
F(s)=\mathcal{L}(f)(s)=\int_{0}^{\infty}e^{-st}f(t)dt,\quad s>0,\label{eq:-50}
\end{equation}
and
\begin{equation}
f(t)\le c_{f}(1+t)^{m},\quad t\ge1,\label{eq:-43}
\end{equation}
for some constants $c_{f}>0$ and $m\ge0$. 
\end{enumerate}
Then
\begin{equation}
\lim_{N\to\infty}\frac{1}{N}\sum_{k=0}^{N-1}F(\omega_{k})=\int_{0}^{\infty}f(t)\prod_{k=0}^{\infty}\phi(\lambda_{k}t)dt<\infty\quad\text{a.s.}\label{eq:-55}
\end{equation}
\end{thm}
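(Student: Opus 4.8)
The plan is to replace the Cesàro average over the one-sided, non-stationary sequence $\{\omega_n\}$ by Birkhoff's ergodic theorem applied to a stationary companion, and to identify the resulting ergodic mean through the Laplace representation $F=\mathcal L(f)$ supplied by (P.2). Concretely, after enlarging the probability space I would extend $\{\xi_k\}_{k\ge1}$ to a two-sided i.i.d. family $\{\xi_k\}_{k\in\mathbb Z}$ and set $\tilde\omega_n=\sum_{j\ge0}\lambda_j\xi_{n-j}$. The left inequality in (\ref{eq:-44}) gives $\mathbb E[1-e^{-t\xi_1}]\le t^{\alpha}$, hence $\mathbb P(\xi_1>x)\le Cx^{-\alpha}$ and $\mathbb E[\xi_1^{\alpha'}]<\infty$ for every $\alpha'<\alpha$; combined with $\lambda_j\le C\gamma^{j}$, a Borel--Cantelli argument shows $\Xi:=\sum_{m\ge0}\gamma^{m}\xi_{-m}<\infty$ a.s. In particular $\tilde\omega_n<\infty$ a.s., and since $\omega_n$ and $\tilde\omega_n$ share the partial sum $\sum_{j<n}\lambda_j\xi_{n-j}$ and differ only by the exponentially small tails $\big(\sum_{j\ge n}\lambda_j\big)\omega_0$ and $\sum_{j\ge n}\lambda_j\xi_{n-j}$, one gets $\omega_n=\tilde\omega_n+e_n$ with $|e_n|\le C\gamma^{n}(\omega_0+\Xi)$ a.s. Finally, $\{\tilde\omega_n\}_{n\in\mathbb Z}$ is stationary and ergodic, being a factor of the Bernoulli shift.

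Birkhoff's theorem then gives $\frac1N\sum_{k=0}^{N-1}F(\tilde\omega_k)\to\mathbb E[F(\tilde\omega_0)]$ a.s.\ (a priori with values in $[0,\infty]$), and by Tonelli (using $f\ge0$) and independence of the $\xi_{-j}$,
\[
\mathbb E[F(\tilde\omega_0)]=\int_0^\infty f(t)\,\mathbb E\big[e^{-t\tilde\omega_0}\big]\,dt=\int_0^\infty f(t)\prod_{j=0}^\infty\phi(\lambda_j t)\,dt ,
\]
the infinite product converging because $1-\phi(\lambda_j t)\le(\lambda_j t)^{\alpha}$ is summable in $j$. The finiteness of this integral is extracted from the upper bound on $\phi$ in (\ref{eq:-44}) together with (\ref{eq:-43}): for $t$ large only $O(\log t)$ indices $j$ satisfy $\lambda_j t\gtrsim1$, each contributing a factor $\lesssim(\lambda_j t)^{-\beta}$, so $\prod_j\phi(\lambda_j t)$ decays fast enough in $t$ to absorb $f(t)\le c(1+t)^{m}$.

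It remains to transfer the limit from $\{\tilde\omega_n\}$ to $\{\omega_n\}$. Since $F$ is non-increasing (being the Laplace transform of $f\ge0$), it is enough to sandwich. Using $|e_n|\le C\gamma^{n}(\omega_0+\Xi)$ together with a polynomial small-ball bound $\mathbb P(\tilde\omega_0\le u)\le Cu^{c}$ for small $u$ — which I would obtain from $\mathbb P(\xi_1\le s)\le e\,\phi(1/s)\le Cs^{\beta}$ and the geometric decay of $\{\lambda_j\}$ — and a second Borel--Cantelli argument, one checks that a.s.\ $(1-\varepsilon)\tilde\omega_n\le\omega_n\le(1+\varepsilon)\tilde\omega_n$ for all sufficiently large $n$. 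Then $F\big((1+\varepsilon)\tilde\omega_n\big)\le F(\omega_n)\le F\big((1-\varepsilon)\tilde\omega_n\big)$; applying Birkhoff to the two outer stationary sequences and letting $\varepsilon\downarrow0$ (monotone convergence, $F\big((1\mp\varepsilon)\tilde\omega_0\big)$ having finite expectation for small $\varepsilon$ by the same estimate as above) forces $\lim_{N}\frac1N\sum_{k=0}^{N-1}F(\omega_k)=\mathbb E[F(\tilde\omega_0)]$ a.s.; the finitely many exceptional small indices are harmless since $\omega_n>0$ a.s.\ makes $F(\omega_n)<\infty$.

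I expect the genuine obstacle to be this last transfer on the set where $\tilde\omega_n$ (equivalently $\omega_n$) is near $0$: because $F$ may be unbounded at the origin, the geometrically small perturbation $e_n$ cannot be discarded there, and controlling it needs \emph{both} the small-ball estimate for $\tilde\omega_0$ (driven by the upper bound on $\phi$) \emph{and} the a.s.\ summability of $\sum_m\gamma^{m}\xi_{-m}$ (driven by the lower bound $1-t^{\alpha}\le\phi(t)$) — so the two halves of hypothesis (\ref{eq:-44}) enter here in an essential way. The same difficulty appears under the alternative route via $\frac1N\sum_{k<N}F(\omega_k)=\int_0^\infty f(t)\big(\tfrac1N\sum_{k<N}e^{-t\omega_k}\big)dt$, where the inner averages converge a.s.\ for each fixed $t$ to $\prod_j\phi(\lambda_j t)$ by the same comparison plus Birkhoff for the \emph{bounded} maps $x\mapsto e^{-tx}$: to push the limit through $\int_0^\infty f(t)\,\cdot\,dt$ one must produce an $N$-uniform majorant of $\frac1N\sum_{k<N}e^{-t\omega_k}$ integrable against $f$ as $t\to\infty$, which can be built from Hopf's maximal inequality applied to the truncations $\prod_{j<L}e^{-t\lambda_j\xi_{k-j}}$ with $L\sim\log t$ — but again only after the super-polynomial decay of $\prod_j\phi(\lambda_j t)$ is in hand.
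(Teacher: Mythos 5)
Your proof is correct, but it takes a genuinely different route from the paper's. The paper never introduces a stationary version of $\omega_{n}$: it works directly with the one-sided sequence, computes $\lim_{n}\mathbb{E}[F(\omega_{n})]$ and $\lim_{n}\mathbb{E}[F(\omega_{n})^{2}]$ through the same Tonelli/Laplace identity you use, proves an exponential covariance decay $\mathrm{Cov}(F(\omega_{n}),F(\omega_{n+k}))\le c\gamma^{\alpha k}$ by exploiting monotonicity of $F$ and the comparison of $\omega_{n+k}$ with $\sum_{j=1}^{k}\lambda_{k-j}\xi_{n+j}$, and then deduces a.s.\ convergence of the Ces\`aro means from the resulting $O(1/n)$ variance bound via an Etemadi-type subsequence/Borel--Cantelli argument along $\theta_{n}=[\theta^{n}]$, using $F\ge0$ to interpolate between the subsequence times. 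Your Birkhoff argument replaces all of this second-moment work by ergodicity of the Bernoulli factor $\tilde\omega_{n}$ and shifts the technical burden to the pathwise transfer $\omega_{n}\approx\tilde\omega_{n}$, which you handle correctly: the two halves of (\ref{eq:-44}) enter exactly where you say they do (a.s.\ finiteness of $\Xi$ from the lower bound; the small-ball estimate $\mathbb{P}(\tilde\omega_{0}\le u)\le Cu^{\beta}$ from the upper bound --- note this needs $\beta>0$, but that is implicitly required by the paper too, since its choice $K>(m+2)/\beta$ and the finiteness of the limit integral already presuppose it), and the sandwich $F((1+\varepsilon)\tilde\omega_{n})\le F(\omega_{n})\le F((1-\varepsilon)\tilde\omega_{n})$ for large $n$ together with monotone convergence in $\varepsilon$ closes the argument, the finitely many exceptional indices being harmless since $\omega_{k}>0$ a.s. What each approach buys: yours is softer (no variance or covariance computations, and it would survive with merely stationary ergodic rather than i.i.d.\ innovations), while the paper's yields quantitative by-products --- the second-moment limit (\ref{eq:-51}) and the covariance decay --- that are reused verbatim in the proof of the central limit theorem (Theorem \ref{thm:-5}), so along your route those would still have to be established separately. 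One loose end you share with the paper: your claim that the $O(\log t)$ indices with $\lambda_{j}t\gtrsim1$ make $\prod_{j}\phi(\lambda_{j}t)$ decay fast enough, like the paper's assertion that $\int_{1}^{\infty}f(t)\prod_{k=0}^{K-1}\phi(\lambda_{k}t)\,dt<\infty$ for large $K$, tacitly requires enough nonzero weights $\lambda_{k}$ (at least of order $(m+2)/\beta$ of them), which does hold in all the intended examples.
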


\begin{rem}
\label{rem:-1}(i) Below are some examples of the function $F(s)$
satisfying the property (P.2).
\begin{itemize}
\item $F(s)=s^{-p}$ for any $p>0$.
\item $F(s)=e^{-bs}(s+a)^{-n}$ for any $a,b>0$ and integer $n\ge1$.
\item $F(s)=s^{-1}(1-e^{-bs})$ for any $b>0$.
\item If $F(s)$ and $G(s)$ satisfy the property (P.2), then $F(s)+G(s)$
and $F(s)G(s)$ also satisfy the property (P.2).
\end{itemize}
(ii) An example of the sequence $\{\lambda_{n}\}_{n\ge0}$ is $\lambda_{n}=(1-\lambda)\lambda^{n}$,
for which $\omega_{n}=\lambda^{n}\omega_{0}+\sum_{k=1}^{n}\lambda^{n-k}\xi_{k}$.
Another example of $\{\lambda_{n}\}_{n\ge0}$ is $\lambda_{n}=L^{-1}$
when $0\le n\le L-1$, and $\lambda_{n}=0$ when $n\ge L$, for which
$\omega_{n}=\frac{1}{L}\sum_{k=0}^{L-1}\xi_{n-k}$, $n\ge L$. For
this case, the corresponding multiplier functions in (\ref{eq:-39})
are given by $U(L)=(L/2)^{1/2}\Gamma[(L-1)/2]/\Gamma(L/2)$ and $V(L)=L/(L-2)$,
with $\Gamma(z)$ being the Gamma function.
\end{rem}

\begin{proof}[Proof of Theorem \ref{thm:-4}]
We will prove the theorem in 3 steps. 

\textbf{Step 1.} We first show that
\begin{equation}
\lim_{n\to\infty}\mathbb{E}(F(\omega_{n}))=\int_{0}^{\infty}f(t)\prod_{k=0}^{\infty}\phi(\lambda_{k}t)dt<\infty,\label{eq:-48}
\end{equation}
and
\begin{equation}
\lim_{n\to\infty}\mathbb{E}\big[F(\omega_{n})^{2}\big]=\int_{0}^{\infty}(f\ast f)(t)\prod_{k=0}^{\infty}\phi(\lambda_{k}t)dt<\infty,\label{eq:-51}
\end{equation}

Let $r_{n}=\sum_{k=n}^{\infty}\lambda_{k}$. By (\ref{eq:-50}) and
Fubini's theorem,
\begin{equation}
\begin{aligned}\mathbb{E}(F(\omega_{n})) & =\mathbb{E}\Big(\int_{0}^{\infty}f(t)e^{-\omega_{n}t}dt\Big)=\int_{0}^{\infty}f(t)\mathbb{E}\big(e^{-\omega_{n}t}\big)dt\\
 & =\int_{0}^{\infty}f(t)e^{-r_{n}t\omega_{0}}\prod_{k=1}^{n}\mathbb{E}(e^{-\lambda_{n-k}t\xi_{k}})dt=\int_{0}^{\infty}f(t)e^{-r_{n}t\omega_{0}}\prod_{k=0}^{n-1}\phi(\lambda_{n-k}t)dt.
\end{aligned}
\label{eq:-46}
\end{equation}
By (\ref{eq:-44}) and (\ref{eq:-43}), for sufficiently large $K\in\mathbb{N}$,
we have $\int_{1}^{\infty}f(t)\prod_{k=0}^{K-1}\phi(\lambda_{k}t)<\infty$.
Moreover, since $F(s)<\infty$, we have 
\[
\int_{0}^{1}f(t)\prod_{k=0}^{K-1}\phi(\lambda_{k}t)dt\le e\int_{0}^{1}e^{-t}f(t)dt\le eF(1)<\infty.
\]
Therefore,
\[
\int_{0}^{\infty}f(t)\prod_{k=0}^{K-1}\phi(\lambda_{k}t)dt<\infty,
\]
which, together with $\phi(\lambda_{k}t)\le1$ and the dominated convergence
theorem, implies (\ref{eq:-48}).

For (\ref{eq:-51}), since $F^{2}$ also satisfies the property (P.2),
by an argument similar to the above, we obtain that
\begin{equation}
\mathbb{E}\big[F(\omega_{n})^{2}\big]=\int_{0}^{\infty}(f\ast f)(t)e^{-r_{n}t\omega_{0}}\prod_{k=0}^{n-1}\phi(\lambda_{n-k}t)dt.\label{eq:-45}
\end{equation}
Clearly, $f\ast f$ has at most polynomial growth at infinity. This,
together with (\ref{eq:-44}) and (\ref{eq:-45}), implies (\ref{eq:-51}).

\textbf{Step 2.} We next show that
\begin{equation}
\text{Cov}(F(\omega_{n}),F(\omega_{n+k}))\le c_{\gamma,f,\phi}\gamma^{\alpha k},\label{eq:-52}
\end{equation}
for all $n,k$.

Since $f\ge0$, we see that $F(s)$ is decreasing in $s$. Therefore,
\[
\begin{aligned}\mathbb{E}\big[F(\omega_{n})F(\omega_{n+k})\big] & =\mathbb{E}\Big[F(\omega_{n})F\Big(r_{n+k}\omega_{0}+\sum_{j=1}^{n+k}\lambda_{n+k-j}\xi_{j}\Big)\Big]\le\mathbb{E}\Big[F(\omega_{n})F\Big(\sum_{j=1}^{k}\lambda_{k-j}\xi_{n+j}\Big)\Big]\\
 & =\mathbb{E}\big[F(\omega_{n})\big]\mathbb{E}\Big[F\Big(\sum_{j=1}^{k}\lambda_{k-j}\xi_{n+j}\Big)\Big]=\mathbb{E}\big[F(\omega_{n})\big]\mathbb{E}\Big[F\Big(\sum_{j=1}^{k}\lambda_{k-j}\xi_{j}\Big)\Big].
\end{aligned}
\]
Applying (\ref{eq:-46}) to the case $\omega_{0}=0$ gives that
\[
\mathbb{E}\big[F(\omega_{n})F(\omega_{n+k})\big]\le\mathbb{E}\big[F(\omega_{n})\big]J_{k},
\]
where
\[
J_{k}=\mathbb{E}\Big[F\Big(\sum_{j=1}^{k}\lambda_{k-j}\xi_{j}\Big)\Big]=\int_{0}^{\infty}f(t)\prod_{j=0}^{k-1}\phi(\lambda_{k}t)dt.
\]
Hence,
\begin{equation}
\begin{aligned}\text{Cov}(F(\omega_{n}),F(\omega_{n+k})) & =\mathbb{E}\big[F(\omega_{n})F(\omega_{n+k})\big]-\mathbb{E}\big[F(\omega_{n})\big]\mathbb{E}\big[F(\omega_{n+k})\big]\\
 & \le\mathbb{E}\big[F(\omega_{n})\big](J_{k}-J_{n+k})+\mathbb{E}\big[F(\omega_{n})\big]\big(J_{n+k}-\mathbb{E}\big[F(\omega_{n+k})\big]\big).
\end{aligned}
\label{eq:-53}
\end{equation}

For the first term in the last inequality of (\ref{eq:-53}), we have
\[
\begin{aligned}J_{k}-J_{n+k} & =\int_{0}^{\infty}f(t)\Big(\prod_{j=0}^{k-1}\phi(\lambda_{j}t)\Big)\Big(1-\prod_{j=0}^{n-1}\phi(\lambda_{k+j}t)\Big)dt\le\int_{0}^{\infty}f(t)\Big(1-\prod_{j=0}^{n-1}\phi(\lambda_{k+j}t)\Big)dt.\end{aligned}
\]
Denote $A=\sup_{n\ge0}\gamma^{-n}\lambda_{n}<\infty$. By (\ref{eq:-44})
and $\lambda_{n}\le A\gamma^{n}$,
\begin{align*}
1-\prod_{j=0}^{n-1}\phi(\lambda_{k+j}t) & =\sum_{l=0}^{n-1}\Big(\prod_{j=l+1}^{n-1}\phi(\lambda_{k+j}t)-\prod_{j=l}^{n-1}\phi(\lambda_{k+j}t)\Big)\\
 & =\sum_{l=0}^{n-1}[1-\phi(\lambda_{k+l}t)]\prod_{j=l+1}^{n-1}\phi(\lambda_{k+j})\le\sum_{l=0}^{n-1}[1-\phi(\lambda_{k+l}t)]\le\frac{A^{\alpha}\gamma^{\alpha k}t^{\alpha}}{1-\gamma^{\alpha}},
\end{align*}
where we use the convention that $\prod_{j=l+1}^{n-1}\phi(\lambda_{k+j}t)=1$
when $l+1>n-1$. Let $K$ be an integer such that $K>(m+2)/\beta$.
By (\ref{eq:-44}) and (\ref{eq:-43}), $\int_{0}^{\infty}t^{\alpha}f(t)\prod_{j=0}^{K-1}\phi(\lambda_{j}t)dt<\infty$.
Therefore, for any $k>K$, 
\begin{align*}
J_{k}-J_{n+k} & \le\frac{A^{\alpha}\gamma^{\alpha k}}{1-\gamma^{\alpha}}\int_{0}^{\infty}t^{\alpha}f(t)\prod_{j=0}^{k-1}\phi(\lambda_{j}t)dt\\
 & \le\frac{A^{\alpha}\gamma^{\alpha k}}{1-\gamma^{\alpha}}\int_{0}^{\infty}t^{\alpha}f(t)\prod_{j=0}^{K-1}\phi(\lambda_{j}t)dt=c_{\gamma,f,\phi}\gamma^{\alpha k}.
\end{align*}
This, together with (\ref{eq:-48}), implies that
\begin{equation}
\mathbb{E}\big[F(\omega_{n})\big](J_{k}-J_{n+k})\le c_{\gamma,f,\phi}\gamma^{\alpha k}.\label{eq:-54}
\end{equation}

For the second term in the last inequality of (\ref{eq:-53}), let
$K$ be same integer as above. By (\ref{eq:-44}) and (\ref{eq:-43}),
$\int_{0}^{\infty}tf(t)\prod_{j=0}^{K-1}\phi(\lambda_{j}t)dt<\infty$.
Therefore, for $k\ge K$,
\begin{align*}
J_{n+k}-\mathbb{E}\big[F(\omega_{n+k})\big] & \le\int_{0}^{\infty}f(t)\big(1-e^{-r_{n+k}t\omega_{0}}\big)\prod_{j=0}^{n+k-1}\phi(\lambda_{j}t)dt\\
 & \le r_{n+k}\omega_{0}\int_{0}^{\infty}tf(t)\prod_{j=0}^{n+k-1}\phi(\lambda_{j}t)dt\le r_{n+k}\omega_{0}\int_{0}^{\infty}tf(t)\prod_{j=0}^{K-1}\phi(\lambda_{j}t)dt=c_{f,\phi}\omega_{0}r_{n+k}.
\end{align*}
By $r_{n+k}\le A\sum_{j=n+k}^{\infty}\gamma^{j}\le c_{\gamma}\gamma^{n+k}$
and (\ref{eq:-48}), we obtain that
\begin{equation}
\mathbb{E}\big[F(\omega_{n})\big]\big(J_{n+k}-\mathbb{E}\big[F(\omega_{n+k})\big]\big)\le c_{\gamma,f,\phi}\gamma^{n+k}.\label{eq:-49}
\end{equation}
Combining (\ref{eq:-54}) and (\ref{eq:-49}) gives (\ref{eq:-52}).

\textbf{Step 3.} We now prove the a.s. convergence (\ref{eq:-55}).

By (\ref{eq:-51}), we have $L=\sup_{n}\text{Var}(F(\omega_{n}))<\infty$.
Denote $\psi_{n}=\frac{1}{n}\sum_{k=0}^{n-1}F(\omega_{k})$. By (\ref{eq:-52}),
\begin{equation}
\begin{aligned}\text{Var}(\psi_{n}) & =n^{-2}\sum_{k=0}^{n-1}\text{Var}(F(\omega_{k}))+2n^{-2}\sum_{1\le k<l\le n}\text{Cov}(F(\omega_{k}),F(\omega_{l}))\\
 & \le n^{-1}L+2c_{\gamma,f,\phi}n^{-2}\sum_{k=1}^{n}\sum_{l=k+1}^{n}\gamma^{\alpha(l-k)}\le n^{-1}\Big(L+2c_{\gamma,f,\phi}\frac{\gamma^{\alpha}}{1-\gamma^{\alpha}}\Big).
\end{aligned}
\label{eq:-75}
\end{equation}
To derive the a.s. convergence of $\lim_{n\to\infty}\psi_{n}$ from
the variance estimate (\ref{eq:-75}), we use an argument similar
to that in \cite{Ete81}. For any $\theta>1$, let $\theta_{n}=\lfloor\theta^{n}\rfloor$
be the integer part of $\theta^{n}$. Then $\theta_{n}\ge\theta^{n}/2$.
Hence, for any $\epsilon>0$, 
\[
\begin{aligned} & \sum_{n=1}^{\infty}\mathbb{P}\big(|\psi_{\theta_{n}}-\mathbb{E}(\psi_{\theta_{n}})|>\epsilon\big)\le\epsilon^{-2}\sum_{n=1}^{\infty}\text{Var}(\psi_{\theta_{n}})\\
 & \hspace{3em}\le\epsilon^{-1}\Big(\frac{L}{2}+c_{\gamma,f,\phi}\frac{\gamma^{\alpha}}{1-\gamma^{\alpha}}\Big)\sum_{n=1}^{\infty}\theta_{n}^{-1}\le\epsilon^{-2}\Big(\frac{L}{2}+c_{\gamma,f,\phi}\frac{\gamma^{\alpha}}{1-\gamma^{\alpha}}\Big)\sum_{n=1}^{\infty}\theta^{-n}<\infty.
\end{aligned}
\]
By the Borel--Cantelli lemma, $\lim_{n\to\infty}\big[\psi_{\theta_{n}}-\mathbb{E}(\psi_{\theta_{n}})\big]=0$
a.s. By (\ref{eq:-48}), we see that
\[
\lim_{n\to\infty}\psi_{\theta_{n}}=J_{\infty}=\int_{0}^{\infty}f(t)\prod_{k=0}^{\infty}\phi(\lambda_{k}t)dt\quad\text{a.s.}
\]
For any $n>0$, let $k(n)$ be such that $\theta_{k(n)}<n\le\theta_{k(n)+1}$.
Then $\theta_{k(n)+1}/\theta_{k(n)}\le\theta$. Clearly $n\psi_{n}$
is increasing in $n$. Therefore,
\[
\frac{\theta_{k(n)}}{\theta_{k(n)+1}}\psi_{\theta_{k(n)}}\le\frac{n}{\theta_{k(n)+1}}\psi_{n}\le\psi_{n}\le\frac{n}{\theta_{k(n)}}\psi_{n}\le\frac{\theta_{k(n)+1}}{\theta_{k(n)}}\psi_{\theta_{k(n)+1}}.
\]
Setting $n\to\infty$ gives that
\[
\theta^{-1}J_{\infty}\le\liminf_{n\to\infty}\psi_{n}\le\limsup_{n\to\infty}\psi_{n}\le\theta J_{\infty}\quad\text{a.s.}
\]
for any $\theta>1$, which implies that
\[
\lim_{n\to\infty}\psi_{n}=J_{\infty}=\int_{0}^{\infty}f(t)\prod_{k=0}^{\infty}\phi(\lambda_{k}t)dt\quad\text{a.s.}
\]
This completes the proof.
\end{proof}
\begin{thm}
\label{thm:-5}Let $\{\lambda_{n}\}_{n\ge0}$ be a sequence of non-negative
numbers satisfying the conditions $\sum_{n=0}^{\infty}\lambda_{n}=1$
and $\sup_{n\ge0}\gamma^{-n}\lambda_{n}<\infty$ for some $\gamma\in(0,1)$.
Let $\{\xi_{n}\}_{n\ge1}$ be a sequence of i.i.d. random variables
of which the Laplace transform $\phi(t)=\mathbb{E}(e^{-t\xi_{1}})$
satisfies the property (\ref{eq:-44}), and $F(s)$ be a completely
monotone function satisfying the properties (P.1) and (P.2) in Theorem
\ref{thm:-4}. Let $\omega_{0}>0$ be a constant and $\{\omega_{n}\}_{n\ge1}$
be the sequence defined by (\ref{eq:-57}). Suppose that $\{\eta_{n}\}_{n\ge1}$
is a sequence of random variables such that $\eta_{n+1}$ is independent
of $\{(\xi_{k},\eta_{k}):1\le k\le n\}$, and 
\[
\mathbb{E}(\eta_{n})=0,
\]
and
\[
\lim_{n\to\infty}\mathbb{E}(\eta_{n}^{2})=1.
\]
Suppose, in addition, that $\sup_{n\ge1}\mathbb{E}(|\eta_{n}|^{p})<\infty$
for some $p>2$. Then
\begin{equation}
\frac{1}{\sqrt{N}}\sum_{k=1}^{N}F(\omega_{k-1})\eta_{k}\to\mathcal{N}\Big(0,\int_{0}^{\infty}(f\ast f)(t)\prod_{k=0}^{\infty}\phi(\lambda_{k}t)dt\Big),\label{eq:-58}
\end{equation}
in distribution as $N\to\infty$, where $\mathcal{N}(\mu,\sigma^{2})$
is the normal distribution with mean $\mu$ and variance $\sigma^{2}$,
$f=\mathcal{L}^{-1}(F)$ is the inverse Laplace transform of $F$,
and the convolution $f\ast f$ is defined by extending $f|_{(-\infty,0)}=0$.
\end{thm}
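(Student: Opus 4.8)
The plan is to establish the CLT via the martingale central limit theorem, treating $M_N = \frac{1}{\sqrt N}\sum_{k=1}^N F(\omega_{k-1})\eta_k$ as a normalized martingale in its natural filtration $\mathcal{F}_n = \sigma\{(\xi_k,\eta_k): 1\le k\le n\}$. Since $\eta_{k}$ has mean zero and is independent of $\mathcal{F}_{k-1}$ (which contains $\omega_{k-1}$), the summands $F(\omega_{k-1})\eta_k$ form a martingale difference array. The two ingredients required are: (a) the conditional variances converge, i.e. $\frac{1}{N}\sum_{k=1}^N F(\omega_{k-1})^2\,\mathbb{E}(\eta_k^2\mid\mathcal{F}_{k-1}) = \frac{1}{N}\sum_{k=1}^N F(\omega_{k-1})^2\,\mathbb{E}(\eta_k^2) \to \int_0^\infty (f*f)(t)\prod_{k=0}^\infty\phi(\lambda_k t)\,dt$ in probability; and (b) a conditional Lindeberg (or Lyapunov) condition, controlled by the $p$-th moment bound $\sup_n\mathbb{E}(|\eta_n|^p)<\infty$.

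First I would reduce (a) to Theorem \ref{thm:-4}. Because $\mathbb{E}(\eta_k^2)\to 1$, a Cesàro/Toeplitz argument lets me replace $\mathbb{E}(\eta_k^2)$ by $1$ at the cost of an error $\frac{1}{N}\sum_k F(\omega_{k-1})^2(\mathbb{E}(\eta_k^2)-1)$; one checks this error is $o(1)$ a.s.\ once $\frac{1}{N}\sum_k F(\omega_{k-1})^2$ is a.s.\ bounded. So it suffices to prove $\frac{1}{N}\sum_{k=0}^{N-1}F(\omega_k)^2 \to \int_0^\infty (f*f)(t)\prod_{k=0}^\infty\phi(\lambda_k t)\,dt$ a.s. The key observation is that $F(s)^2$ has inverse Laplace transform $f*f$ (Laplace transform turns products into convolutions), and $f*f\ge 0$ inherits at most polynomial growth from $f$, while $F^2\ge 0$ on $(0,\infty)$; hence $F^2$ satisfies (P.1)–(P.2) — this is exactly the closure-under-products remark in Remark \ref{rem:-1}(i). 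Applying Theorem \ref{thm:-4} with $F$ replaced by $F^2$ gives precisely the claimed limit. Note $\omega_{k-1}$ versus $\omega_k$ indexing differs by a single boundary term, negligible after division by $N$.

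For (b), I would verify Lyapunov's condition: $\frac{1}{N^{p/2}}\sum_{k=1}^N \mathbb{E}(|F(\omega_{k-1})\eta_k|^p) = \frac{1}{N^{p/2}}\sum_{k=1}^N \mathbb{E}(F(\omega_{k-1})^p)\,\mathbb{E}(|\eta_k|^p)$, using independence of $\eta_k$ from $\mathcal{F}_{k-1}$. Since $\sup_k\mathbb{E}(|\eta_k|^p)=:C_p<\infty$, this is bounded by $C_p N^{-p/2}\sum_{k=1}^N\mathbb{E}(F(\omega_{k-1})^p)$. I need $\sup_k \mathbb{E}(F(\omega_k)^p)<\infty$: writing $\mathbb{E}(F(\omega_k)^p)=\mathbb{E}\big[\int_0^\infty f^{*p}(t)e^{-t\omega_k}\,dt\big]$ with $f^{*p}\ge 0$ of polynomial growth, and bounding $\mathbb{E}(e^{-t\omega_k})=\prod_{j}\phi(\lambda_j^{(k)}t)$ using the hypothesis $\phi(t)\le c(1+t)^{-\beta}$ together with the geometric decay $\sup_n\gamma^{-n}\lambda_n<\infty$, I can mimic the integrability estimate already needed in the proof of Theorem \ref{thm:-4} to obtain a uniform-in-$k$ bound (here one may need to assume, or it is implicit from (P.2) applied to a suitable power, that $F^p$ also has polynomial-growth inverse Laplace transform, or argue directly that the tail $\int^\infty f^{*p}(t)\prod_j\phi(\lambda_j^{(k)}t)\,dt$ converges — the decay rate $\beta>0$ combined with infinitely many factors forces super-polynomial decay of $\prod_j\phi(\lambda_j t)$). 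Then the Lyapunov ratio is $O(N\cdot N^{-p/2})=O(N^{1-p/2})\to 0$ since $p>2$.

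With (a) and (b) in hand, the martingale CLT (e.g.\ Theorem 3.2 in \cite{HLBH80}) yields $M_N \Rightarrow \mathcal{N}\big(0,\int_0^\infty (f*f)(t)\prod_{k=0}^\infty\phi(\lambda_k t)\,dt\big)$, which is (\ref{eq:-58}). \textbf{The main obstacle} is part (b): securing a uniform-in-$k$ bound on $\mathbb{E}(F(\omega_k)^p)$. Unlike the second moment, $F^p$ need not obviously satisfy (P.2), so I expect the real work to be an a priori moment estimate showing $\prod_{j=0}^\infty\phi(\lambda_j t)$ decays fast enough (faster than any polynomial, thanks to $\beta>0$ and the geometric lower bound on $\lambda_j$) to absorb the polynomial growth of $f^{*p}$ — essentially re-running, with care over uniformity in $k$, the convergence-of-integral argument that underlies Theorem \ref{thm:-4}. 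A minor secondary point is handling the $\omega_{k-1}$-vs-$\omega_k$ index shift and the replacement of $\mathbb{E}(\eta_k^2)$ by its limit, both routine Cesàro arguments.
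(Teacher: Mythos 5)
Your proposal is correct and follows essentially the same route as the paper: a martingale CLT with the conditional variance handled by applying Theorem \ref{thm:-4} to $F^{2}$ (whose inverse Laplace transform is $f\ast f$), and the Lindeberg/Lyapunov condition controlled by a uniform-in-$k$ bound on $\mathbb{E}\big(F(\omega_{k})^{p}\big)$ coming from the same Laplace-transform moment estimate. The obstacle you flag is resolved exactly as you suggest: the paper applies (\ref{eq:-48}) to integer powers $F^{m}$, whose inverse Laplace transform $f^{\ast m}\ge0$ has polynomial growth, and this dominates the non-integer power via $F^{p}\le1+F^{\lceil p\rceil}$.
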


\begin{proof}
Let $M_{n}=\sum_{k=1}^{n}F(\omega_{k-1})\eta_{k},$ $M_{0}=0$. Clearly,
$\{M_{n}\}_{n\ge1}$ is a martingale in view of $\mathbb{E}(\eta_{n})=0$.
Let $\mathcal{F}_{n}=\sigma(\xi_{k},\eta_{k}:1\le k\le n)$ be the
filtration generated by $\{(\xi_{n},\eta_{n})\}_{n\ge1}$, and define
\begin{align*}
s_{n} & =\sum_{k=1}^{n}\text{Var}(M_{k}-M_{k-1})=\sum_{k=1}^{n}\text{Var}\big(F(\omega_{k-1})\eta_{k}\big),\\
v_{n} & =\sum_{k=1}^{n}\text{Var}(M_{k}-M_{k-1}|\mathcal{F}_{k-1})=\sum_{k=1}^{n}\text{Var}\big(F(\omega_{k-1})\eta_{k}|\mathcal{F}_{k-1}\big).
\end{align*}
By independence of $\eta_{k}$ and $\mathcal{F}_{k-1}$, we have $s_{n}=\sum_{k=1}^{n}\mathbb{E}\big(F(\omega_{k-1})^{2}\big)$
and $v_{n}=\sum_{k=1}^{n}F(\omega_{k-1})^{2}$. We will show that
the Lindeberg condition
\begin{equation}
\lim_{n\to\infty}\frac{1}{n}\sum_{k=1}^{n}\mathbb{E}\big[F(\omega_{k-1})^{2}\eta_{k}^{2}\chi_{\{|F(\omega_{k-1})\eta_{k}|>\epsilon\sqrt{n}\}}\big]=0,\label{eq:-74}
\end{equation}
holds for any $\epsilon>0$, and that
\begin{equation}
\lim_{n\to\infty}\frac{s_{n}}{v_{n}}=1\quad\text{a.s.}\label{eq:-59}
\end{equation}
Once (\ref{eq:-74}) and (\ref{eq:-59}) are proved, by the martingale
central limit theorem (see for example \cite[page 9--10]{HLBH80}),
we may deduce that
\begin{equation}
\frac{1}{\sqrt{s_{N}}}\sum_{k=1}^{N}F(\omega_{k-1})\eta_{k}\to\mathcal{N}(0,1),\label{eq:-60}
\end{equation}
in distribution. Moreover, by (\ref{eq:-51}), we have
\begin{equation}
\lim_{n\to\infty}\frac{s_{n}}{n}=\lim_{n\to\infty}\mathbb{E}\big(F(\omega_{n})^{2}\big)=\int_{0}^{\infty}(f\ast f)(t)\prod_{k=0}^{\infty}\phi(\lambda_{k}t)dt\quad\text{a.s.}\label{eq:-61}
\end{equation}
Combining (\ref{eq:-60}) and (\ref{eq:-61}) proves the desired convergence
(\ref{eq:-58}). 

To prove that the Lindeberg condition (\ref{eq:-74}) holds, for any
integer $m\ge1$, note that by the properties of completely monotone
functions (see for example \cite[Chapter 1]{SSV12}), $F^{p}$ satisfies
the properties (P.1) and (P.2). Applying (\ref{eq:-48}) to $F^{p}$
gives that $\sup_{n\ge1}\mathbb{E}(F(\omega_{k-1})^{p})\le c_{p,\gamma,f,\phi}<\infty$.
Let $A=\sup_{n\ge1}\mathbb{E}(|\eta_{n}|^{p})$. Then
\[
\begin{aligned}\mathbb{E}\big[F(\omega_{k-1})^{2}\eta_{k}^{2}\chi_{\{|F(\omega_{k-1})\eta_{k}|>\epsilon\sqrt{n}\}}\big] & \le\frac{1}{\epsilon^{p-2}n^{p/2-1}}\mathbb{E}\big[F(\omega_{k-1})^{p}|\eta_{k}|^{p}\big]\\
 & =\frac{1}{\epsilon^{p-2}n^{p/2-1}}\mathbb{E}\big[F(\omega_{k-1})^{p}\big]\mathbb{E}\big[|\eta_{k}|^{p}\big]\le\frac{c_{p,\gamma,f,\phi}A}{\epsilon^{p-2}n^{p/2-1}}.
\end{aligned}
\]
Therefore,
\[
\frac{1}{n}\sum_{k=1}^{n}\mathbb{E}\big[F(\omega_{k-1})^{2}\eta_{k}^{2}\chi_{\{|F(\omega_{k-1})\eta_{k}|>\epsilon\sqrt{n}\}}\big]\le\frac{c_{p,\gamma,f,\phi}A}{\epsilon^{p-2}n^{p/2-1}}\to0,
\]
as $n\to\infty$.

It remains to prove (\ref{eq:-59}). Note that $F^{2}$ also satisfies
the properties (P.1) and (P.2). By Theorem \ref{thm:-4}, we obtain
that 
\begin{equation}
\lim_{n\to\infty}\frac{v_{n}}{n}=\int_{0}^{\infty}(f\ast f)(t)\prod_{k=0}^{\infty}\phi(\lambda_{k}t)dt\quad\text{a.s.},\label{eq:-63}
\end{equation}
which, together with (\ref{eq:-61}) implies (\ref{eq:-59}). This
completes the proof of Theorem \ref{thm:-5}.
\end{proof}

\subsection{\label{subsec:-3}Reduction to an equivalent convergence}

For this subsection and the rest of this paper, we will denote processes
such as $I_{t}$ and $v_{n}$ by $I_{t}^{(N)}$ and $v_{n}^{(N)}$
when it is necessary to indicate their dependence on $\Delta t$.
Then
\begin{equation}
\begin{aligned}\log\frac{I_{T}^{(N)}}{I_{0}} & =\int_{0}^{T}r(t)dt+\sum_{k=1}^{N}\dfrac{\bar{\sigma}}{\sqrt{v_{k-1}^{(N)}}}\int_{t_{k-1}}^{t_{k}}[\rho(t)-r(t)]dt\\
 & -\frac{1}{2}\sum_{k=1}^{N}\dfrac{\bar{\sigma}^{2}}{v_{k-1}^{(N)}}\int_{t_{k-1}}^{t_{k}}\sigma(t)^{2}dt+\sum_{k=1}^{N}\dfrac{\bar{\sigma}}{\sqrt{v_{k-1}^{(N)}}}\int_{t_{k-1}}^{t_{k}}\sigma(t)dW_{t},
\end{aligned}
\label{eq:-64}
\end{equation}
We shall show that the convergence of $I_{T}^{(N)}$ in distribution
is equivalent to that of a simpler random variable $X_{T}^{(N)}$
defined by
\begin{equation}
\begin{aligned}\frac{dX_{t}^{(N)}}{X_{t}^{(N)}} & =\Big(1-\frac{\bar{\sigma}}{\sqrt{u_{t_{n-1}}^{(N)}}}\Big)r(t)dt+\frac{\bar{\sigma}}{\sqrt{u_{t_{n-1}}^{(N)}}}\frac{dS_{t}}{S_{t}},\quad t\in[t_{n-1},t_{n}),\\
X_{0}^{(N)} & =I_{0},
\end{aligned}
\label{eq:-125}
\end{equation}
where
\[
\begin{aligned}u_{n}^{(N)} & =\lambda u_{n-1}^{(N)}+\frac{1-\lambda}{\Delta t}\Big(\int_{t_{k-1}}^{t_{k}}\sigma(t)dW_{t}\Big)^{2},\quad u_{0}^{(N)}=v_{0},\end{aligned}
\]
or equivalently,
\begin{equation}
\begin{aligned}\log\frac{X_{T}^{(N)}}{I_{0}} & =\int_{0}^{T}r(t)dt+\sum_{k=1}^{N}\dfrac{\bar{\sigma}}{\sqrt{u_{k-1}^{(N)}}}\int_{t_{k-1}}^{t_{k}}[\rho(t)-r(t)]dt\\
 & -\frac{1}{2}\sum_{k=1}^{N}\dfrac{\bar{\sigma}^{2}}{u_{k-1}^{(N)}}\int_{t_{k-1}}^{t_{k}}\sigma(t)^{2}dt+\sum_{k=1}^{N}\dfrac{\bar{\sigma}}{\sqrt{u_{k-1}^{(N)}}}\int_{t_{k-1}}^{t_{k}}\sigma(t)dW_{t}
\end{aligned}
\label{eq:-127}
\end{equation}
and
\begin{equation}
u_{n}^{(N)}=\lambda^{n}v_{0}+\frac{1-\lambda}{\Delta t}\sum_{k=1}^{n}\lambda^{n-k}\Big(\int_{t_{k-1}}^{t_{k}}\sigma(t)dW_{t}\Big)^{2}.\label{eq:-14}
\end{equation}

We start with the following lemma.
\begin{lem}
\label{lem:-5}For any $h(x)=x^{-\alpha}$, $\alpha>0$, we have
\begin{equation}
\sup_{N\ge1}\sup_{n<N}\mathbb{E}\big[h(u_{n}^{(N)})\big]\le c_{\alpha,\lambda,v_{0},\sigma_{\ast}}<\infty,\label{eq:-79}
\end{equation}
\begin{equation}
\sup_{N\ge1}\sup_{n<N}\mathbb{E}\big[h(v_{n}^{(N)})\big]\le c_{\alpha,\lambda,v_{0},\sigma_{\ast}}<\infty,\label{eq:-33}
\end{equation}
for some constant $c_{\alpha,\lambda,v_{0},\sigma_{\ast}}$ depending
only on $\alpha,\lambda,v_{0},\sigma_{\ast}$, and for any $p\ge1$,
\begin{equation}
\lim_{N\to\infty}\sup_{n<N}\mathbb{E}\big[|h(u_{n}^{(N)})-h(v_{n}^{(N)})|^{p}\big]=0.\label{eq:-31}
\end{equation}
\end{lem}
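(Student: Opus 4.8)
The plan is to control negative moments of $u_n^{(N)}$ and $v_n^{(N)}$ through their Laplace transforms, since for $h(x) = x^{-\alpha}$ with $\alpha > 0$ one has the standard identity $x^{-\alpha} = \frac{1}{\Gamma(\alpha)}\int_0^\infty t^{\alpha-1} e^{-xt}\, dt$. Thus $\mathbb{E}[h(u_n^{(N)})] = \frac{1}{\Gamma(\alpha)}\int_0^\infty t^{\alpha-1}\mathbb{E}[e^{-t u_n^{(N)}}]\,dt$, and similarly for $v_n^{(N)}$. From the representation \eqref{eq:-14}, $u_n^{(N)}$ is a weighted sum of independent terms $\big(\int_{t_{k-1}}^{t_k}\sigma(s)dW_s\big)^2$; writing $\sigma_\ast^2 \Delta t \le \int_{t_{k-1}}^{t_k}\sigma(s)^2 ds \le (\sigma^\ast)^2\Delta t$, each such term dominates $\sigma_\ast^2 (\Delta t) Z_k^2$ in distribution, with $Z_k$ i.i.d.\ standard normal. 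Hence $\mathbb{E}[e^{-t u_n^{(N)}}] \le \prod_{k=1}^n \mathbb{E}\big[\exp(-t\tfrac{(1-\lambda)\sigma_\ast^2}{1}\lambda^{n-k} Z_k^2)\big] = \prod_{k=1}^n (1 + 2(1-\lambda)\sigma_\ast^2 \lambda^{n-k} t)^{-1/2}$. The point is that this bound is \emph{uniform in $N$ and $n$}: the first few factors ($j = 0, 1, \dots, K-1$) already force integrability near $t = \infty$, since $\prod_{j=0}^{K-1}(1 + c\lambda^j t)^{-1/2}$ decays like $t^{-K/2}$, which beats $t^{\alpha - 1}$ once $K > 2\alpha$; and near $t = 0$ the factor $t^{\alpha-1}$ is integrable. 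This gives \eqref{eq:-79}, and \eqref{eq:-33} follows identically using the lognormal representation \eqref{eq:-36} — there one uses $\big(e^{\int(\rho-\sigma^2/2)+\int\sigma dW}-1\big)^2$, which again stochastically dominates a constant multiple of $(\Delta t) Z_k^2$ after noting that on the event $|Z_k|$ moderate the exponential minus one behaves like the Gaussian increment; a little care is needed here because the square of $(e^x - 1)$ is not simply comparable to $x^2$ for large $|x|$, but $e^{-t(e^x-1)^2}$ is still bounded by $e^{-c t x^2 \wedge \text{const}}$ on a set of full-enough measure, or one can simply bound $(e^x-1)^2 \ge x^2 e^{-|x|} \ge \dots$ and integrate the Gaussian tail separately.

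For the convergence \eqref{eq:-31}, I would first reduce to the case $p = 1$ is not enough — we need all $p \ge 1$ — but the strategy is uniform: write $h(u) - h(v) = h'(\zeta)(u - v)$ for $\zeta$ between $u$ and $v$, so $|h(u_n) - h(v_n)|^p \le \alpha^p (u_n \wedge v_n)^{-(\alpha+1)p} |u_n - v_n|^p$, and apply Cauchy–Schwarz (or Hölder) to separate the negative-moment factor from $|u_n - v_n|^p$. The negative moments $\mathbb{E}[(u_n\wedge v_n)^{-q}] \le \mathbb{E}[u_n^{-2q}]^{1/2} + \mathbb{E}[v_n^{-2q}]^{1/2}$ are uniformly bounded by the first part of the lemma. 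So everything comes down to showing $\sup_{n<N}\mathbb{E}[|u_n^{(N)} - v_n^{(N)}|^{2p}] \to 0$ as $N \to \infty$, i.e.\ as $\Delta t \to 0$. From \eqref{eq:-14} and \eqref{eq:-36},
\[
u_n^{(N)} - v_n^{(N)} = \frac{1-\lambda}{\Delta t}\sum_{k=1}^n \lambda^{n-k}\Big[\big(\Delta W^\sigma_k\big)^2 - \big(e^{R_k + \Delta W^\sigma_k} - 1\big)^2\Big],
\]
where $\Delta W^\sigma_k = \int_{t_{k-1}}^{t_k}\sigma dW$ and $R_k = \int_{t_{k-1}}^{t_k}(\rho - \sigma^2/2)\,dt = O(\Delta t)$. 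Using $e^{R_k + x} - 1 = x + O(|x|^2 + \Delta t)$ for $|x|$ bounded and a Taylor expansion, the bracketed term is $O\big(|\Delta W^\sigma_k|^3 + \Delta t\,|\Delta W^\sigma_k| + (\Delta t)^2 + \text{(tail terms for large } |\Delta W^\sigma_k|)\big)$, which has $L^{2p}$-norm of order $(\Delta t)^{3/2} + (\Delta t)^{3/2} + (\Delta t)^2 = O((\Delta t)^{3/2})$ since $\Delta W^\sigma_k \sim \mathcal{N}(0, O(\Delta t))$. Summing with the weights $\frac{1-\lambda}{\Delta t}\sum_k \lambda^{n-k} = O(1)$ and using Minkowski (the summands are independent and mean-$O((\Delta t)^{?})$, so one should really take expectations carefully, perhaps centering first), one gets $\|u_n - v_n\|_{2p} \le \frac{C}{\Delta t}\cdot O((\Delta t)^{3/2}) \cdot (\text{effective number of terms bounded}) = O((\Delta t)^{1/2}) \to 0$, uniformly in $n < N$.

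The main obstacle I anticipate is making the last estimate genuinely \emph{uniform in $n$} while being honest about the large-deviation tails of the Gaussian increments $\Delta W^\sigma_k$: the Taylor expansion $e^x - 1 \approx x$ is only good for $|x| \lesssim 1$, and on $\{|x| \gg 1\}$ one must fall back on crude bounds like $(e^x - 1)^2 \le e^{2x}$, whose expectation against a Gaussian of variance $O(\Delta t)$ is $1 + O(\Delta t)$ — fine, but one has to track that the \emph{difference} from $x^2$ on this event is still $o(1)$ after dividing by $\Delta t$. I would handle this by splitting each increment at the threshold $|\Delta W^\sigma_k| \le (\Delta t)^{1/2}\log(1/\Delta t)$: on the good event the polynomial Taylor bound applies and gives the $(\Delta t)^{3/2}$ rate above; the bad event has probability decaying faster than any power of $\Delta t$ by the Gaussian tail, so even the crude $L^{2p}$ bound there (which is $O(1)$ per term, times at most $n \le T/\Delta t$ terms) is swamped. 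A secondary technical point is that, for \eqref{eq:-33} and for the tails in \eqref{eq:-31}, the exponential $e^{R_k + \Delta W^\sigma_k} - 1$ can be \emph{negative} and close to $-1$, but its square is still bounded below by a constant times $(\Delta W^\sigma_k)^2$ away from the bad event, which is all the Laplace-transform domination argument needs. Once these tail splittings are set up, the rest is the routine Laplace-transform / Taylor bookkeeping sketched above.
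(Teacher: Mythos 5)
Your plan follows essentially the same route as the paper's proof: negative moments of $u_n^{(N)}$ and $v_n^{(N)}$ via the Laplace-transform identity and the product of $(1+c(1-\lambda)\lambda^{j}t)^{-1/2}$ factors (with the same additive split to handle the fact that $(e^x-1)^2$ is only comparable to $x^2$ on $\{x\ge-1\}$), then for (\ref{eq:-31}) the mean-value/monotonicity bound, H\"older separation, and a uniform $\Vert u_n^{(N)}-v_n^{(N)}\Vert_{2p}=O((\Delta t)^{1/2})$ estimate --- the paper does this last step with the global inequalities $|e^{y}-1|\le|y|e^{|y|}$, $|e^{y}-1-y|\le y^{2}e^{|y|}$ and Gaussian exponential moments instead of your log-threshold truncation, a purely technical variant. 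One small point to patch: for $n$ below your cutoff $K$ the finite product alone does not make $t^{\alpha-1}$ integrable at infinity, so you must keep the deterministic factor $e^{-\lambda^{n}v_{0}t}\le e^{-\lambda^{K}v_{0}t}$, exactly as in the paper's bound (\ref{eq:-114}).
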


\begin{proof}
We first prove (\ref{eq:-79}). Note that the inverse Laplace transform
of $h(x)$ is $\mathcal{L}^{-1}(h)(t)=t^{\alpha-1}/\Gamma(\alpha).$
Therefore, by Fubini's theorem,
\begin{equation}
\begin{aligned}\mathbb{E}\big[h(u_{n}^{(N)})\big] & =\mathbb{E}\Big[\frac{1}{\Gamma(\alpha)}\int_{0}^{\infty}t^{\alpha-1}e^{-u_{n}^{(N)}t}dt\Big]\\
 & =\frac{1}{\Gamma(\alpha)}\int_{0}^{\infty}t^{\alpha-1}\mathbb{E}\big(e^{-u_{n}^{(N)}t}\big)dt=\frac{1}{\Gamma(\alpha)}\int_{0}^{\infty}t^{\alpha-1}e^{-\lambda^{n}v_{0}t}\prod_{k=1}^{n-1}\mathbb{E}(\mathcal{E}_{k})dt,
\end{aligned}
\label{eq:-35}
\end{equation}
where $\mathcal{E}_{k}=\exp\Big(-\frac{1-\lambda}{\Delta t}t\lambda^{n-k}\Big(\int_{t_{k-1}}^{t_{k}}\sigma(t)dW_{t}\Big)^{2}\Big)$.
Recall that for a standard normal distribution $Z$ and any $a,\sigma>0$,
$\mu\in\mathbb{R}$, we have
\begin{equation}
\mathbb{E}[e^{-a(\mu+\sigma Z)^{2}}]=(1+2a\sigma^{2})^{-1/2}\exp\Big(-\frac{a\mu^{2}}{1+2a\sigma^{2}}\Big).\label{eq:-133}
\end{equation}
Since $\int_{t_{k-1}}^{t_{k}}\sigma(t)dW_{t}$ is a normal distribution
with mean zero and variance $\int_{t_{k-1}}^{t_{k}}\sigma(t)^{2}dt$,
by (\ref{eq:-133}),
\[
\mathbb{E}(\mathcal{E}_{k})=\Big[1+2\frac{1-\lambda}{\Delta t}t\lambda^{n-k}\Big(\int_{t_{k-1}}^{t_{k}}\sigma(t)^{2}dt\Big)\Big]^{-1/2}\le[1+2t(1-\lambda)\lambda^{n-k}\sigma_{\ast}^{2}]^{-1/2}.
\]
Therefore, by the above and (\ref{eq:-35}),
\begin{equation}
\mathbb{E}\big[h(u_{n}^{(N)})\big]\le\frac{1}{\Gamma(\alpha)}\int_{0}^{\infty}t^{\alpha-1}e^{-\lambda^{n}v_{0}t}\prod_{k=0}^{n-1}[1+2t(1-\lambda)\lambda^{k}\sigma_{\ast}^{2}]^{-1/2}dt.\label{eq:-66}
\end{equation}
Let $m$ be an integer such that $m>2\alpha$. For $n\ge m$, by (\ref{eq:-66}),
\begin{equation}
\begin{aligned}\mathbb{E}\big[h(u_{n}^{(N)})\big] & \le\frac{1}{\Gamma(\alpha)}\int_{0}^{\infty}t^{\alpha-1}\prod_{k=0}^{m-1}[1+2t(1-\lambda)\lambda^{k}\sigma_{\ast}^{2}]^{-1/2}dt\end{aligned}
.\label{eq:-78}
\end{equation}
For $n<m$, by (\ref{eq:-66}) again,
\begin{equation}
\mathbb{E}\big[h(u_{n}^{(N)})\big]\le\frac{1}{\Gamma(\alpha)}\int_{0}^{\infty}t^{\alpha-1}e^{-\lambda^{m}v_{0}t}dt.\label{eq:-114}
\end{equation}
The uniform $L^{1}$ bound (\ref{eq:-79}) follows from (\ref{eq:-78})
and (\ref{eq:-114}).

We next prove (\ref{eq:-33}). Denote $\theta_{k}=\int_{t_{k-1}}^{t_{k}}(\rho(t)-\sigma(t)^{2}/2)dt$,
$Y_{k}=\int_{t_{k-1}}^{t_{k}}\sigma(t)dW_{t}$, and $R_{k}=(\Delta t)^{-1/2}(\theta_{k}+Y_{k})$.
Then 
\[
v_{n}^{(N)}=\lambda^{n}v_{0}+\frac{1-\lambda}{\Delta t}\sum_{k=1}^{n}\lambda^{n-k}\big[e^{\sqrt{\Delta t}R_{k}}-1\big]^{2}.
\]
Note that
\begin{equation}
\frac{1}{\sqrt{\Delta t}}\big|e^{\sqrt{\Delta t}x}-1\big|\ge e^{-\sqrt{\Delta t}}|x|\chi_{\{x\ge-1\}}+\frac{1-e^{-\sqrt{T}}}{\sqrt{T}}\chi_{\{x<-1\}}.\label{eq:-115}
\end{equation}
Indeed, for $x\ge0$, we have $|e^{\sqrt{\Delta t}x}-1|/\sqrt{\Delta t}\ge|x|$.
When $-1\le x<0$, we have $|e^{\sqrt{\Delta t}x}-1|/\sqrt{\Delta t}=e^{-\sqrt{\Delta t}x}\big(e^{\sqrt{\Delta t}x}-1\big)/\sqrt{\Delta t}\ge e^{-\sqrt{\Delta t}}|x|$.
When $x<-1$, we have $|e^{\sqrt{\Delta t}x}-1|/\sqrt{\Delta t}\ge(1-e^{-\sqrt{\Delta t}})/\sqrt{\Delta t}\ge(1-e^{-\sqrt{T}})/\sqrt{T}$.
By (\ref{eq:-115}),
\begin{equation}
\begin{aligned}\mathbb{E}\Big[\exp\Big(-t(1-\lambda)\lambda^{n-k}\frac{(e^{\sqrt{\Delta t}R_{k}}-1)^{2}}{\Delta t}\Big)\Big] & \le\mathbb{E}\Big[e^{-t(1-\lambda)\lambda^{n-k}R_{k}^{2}}+\exp\Big(-t(1-\lambda)\lambda^{n-k}\frac{(1-e^{-\sqrt{\Delta t}})^{2}}{\Delta t}\Big)\Big]\\
 & \le\mathbb{E}\big[e^{-t(1-\lambda)\lambda^{n-k}R_{k}^{2}}\big]+e^{-c_{T}t(1-\lambda)\lambda^{n-k}}.
\end{aligned}
\label{eq:-34}
\end{equation}
Since $R_{k}$ is a normal distribution with mean $(\Delta t)^{-1/2}\theta_{k}$
and variance $\sigma_{k}^{2}=(\Delta t)^{-1}\int_{t_{k-1}}^{t_{k}}\sigma(t)^{2}dt$,
by (\ref{eq:-133}),
\begin{align*}
\mathbb{E}\big[e^{-t(1-\lambda)\lambda^{n-k}R_{k}^{2}}\big] & =[1+2t(1-\lambda)\lambda^{n-k}\sigma_{k}^{2}]^{-1/2}\exp\Big(-\frac{t(1-\lambda)\lambda^{n-k}\theta_{k}^{2}(\Delta t)^{-1}}{1+2t(1-\lambda)\lambda^{n-k}\sigma_{k}^{2}}\Big).
\end{align*}
By $\sigma(t)\ge\sigma_{\ast}$, we obtain that $\mathbb{E}\big[e^{-t(1-\lambda)\lambda^{n-k}R_{k}^{2}}\big]\le[1+2t(1-\lambda)\lambda^{n-k}\sigma_{\ast}^{2}]^{-1/2}$,
and therefore, by (\ref{eq:-34})
\[
\begin{aligned}\mathbb{E}\Big[\exp\Big(-t(1-\lambda)\lambda^{n-k}\frac{(e^{\sqrt{\Delta t}R_{k}}-1)^{2}}{\Delta t}\Big)\Big] & \le[1+2t(1-\lambda)\lambda^{n-k}\sigma_{\ast}^{2}]^{-1/2}+e^{-c_{T}t(1-\lambda)\lambda^{n-k}}\\
 & \le c_{\sigma_{\ast},T}[1+t(1-\lambda)\lambda^{n-k}]^{-1/2}.
\end{aligned}
\]
Moreover, 
\[
\begin{aligned}\mathbb{E}\big[h(v_{n}^{(N)})\big] & =\frac{1}{\Gamma(\alpha)}\int_{0}^{\infty}t^{\alpha-1}e^{-\lambda^{n}v_{0}t}\prod_{k=1}^{n-1}\mathbb{E}\Big[\exp\Big(-t(1-\lambda)\lambda^{n-k}\frac{(e^{\sqrt{\Delta t}R_{k}}-1)^{2}}{\Delta t}\Big)\Big]dt\\
 & \le\frac{c_{\sigma_{\ast},T}}{\Gamma(\alpha)}\int_{0}^{\infty}t^{\alpha-1}e^{-\lambda^{n}v_{0}t}\prod_{k=1}^{n-1}[1+t(1-\lambda)\lambda^{n-k}]^{-1/2}dt,
\end{aligned}
\]
which, similar to the proof of (\ref{eq:-79}), implies the uniform
$L^{1}$-bound (\ref{eq:-33}).

We now prove the uniform convergence (\ref{eq:-31}). It suffices
to prove this for integer-valued $p$. By Hölder's inequality,
\begin{equation}
\begin{aligned}\mathbb{E}\big(|u_{n}^{(N)}-v_{n}^{(N)}|^{2p}\big) & \le\mathbb{E}\Big[\Big(\frac{1-\lambda}{\Delta t}\sum_{k=1}^{n}\lambda^{n-k}\big|(e^{\theta_{k}+Y_{k}}-1)^{2}-Y_{k}^{2}\big|\Big)^{2p}\Big]\\
 & \le\mathbb{E}\Big(\frac{1-\lambda}{(\Delta t)^{2p}}\sum_{k=1}^{n}\lambda^{n-k}\big|(e^{\theta_{k}+Y_{k}}-1)^{2}-Y_{k}^{2}\big|^{2p}\bigg)\\
 & \le\frac{1-\lambda}{(\Delta t)^{2p}}\sum_{k=1}^{n}\lambda^{n-k}\mathbb{E}\big[(e^{\theta_{k}+Y_{k}}-1-Y_{k})^{4p}\big]^{1/2}\mathbb{E}\big[(e^{\theta_{k}+Y_{k}}-1+Y_{k})^{4p}\big]^{1/2}.
\end{aligned}
\label{eq:-123}
\end{equation}
Let $M>(\sigma^{\ast})^{2}$ be such that $\Vert\rho\Vert_{L^{\infty}(0,T)}\le M$.
By 
\[
\begin{aligned}(e^{\theta_{k}+Y_{k}}-1+Y_{k})^{4p} & =[e^{\theta_{k}}(e^{Y_{k}}-1)+(e^{\theta_{k}}-1)+Y_{k}]^{4p}\le c_{p}[e^{4p\theta_{k}}(e^{Y_{k}}-1)^{4p}+(e^{\theta_{k}}-1)^{4p}+Y_{k}^{4p}],\end{aligned}
\]
we have
\begin{equation}
\begin{aligned}\mathbb{E}\big[(e^{\theta_{k}+Y_{k}}-1+Y_{k})^{4p}\big] & \le c_{p}\big[e^{4p\theta_{k}}\mathbb{E}[(e^{Y_{k}}-1)^{4p}]+(e^{\theta_{k}}-1)^{4p}+\mathbb{E}(Y_{k}^{4p})\big]\\
 & \le c_{p,M,T}\big[\mathbb{E}[(e^{Y_{k}}-1)^{4p}]+(e^{c_{M,T}\Delta t}-1)^{4p}+(\Delta t)^{2p}\big]\\
 & \le c_{p,M,T}\big[\mathbb{E}[(|Y_{k}|e^{|Y_{k}|})^{4p}]+(e^{c_{M,T}\Delta t}-1)^{4p}+(\Delta t)^{2p}\big]\\
 & \le c_{p,M,T}\big[\mathbb{E}\big(|Y_{k}|^{8p}\big)^{1/2}\mathbb{E}\big(e^{8p|Y_{k}|}\big)^{1/2}+(e^{c_{p,M,T}\Delta t}-1)^{4p}+(\Delta t)^{2p}\big]\\
 & \le c_{p,M,T}\big[(e^{c_{M,T}\Delta t}-1)^{4p}+(\Delta t)^{2p}\big],
\end{aligned}
\label{eq:-121}
\end{equation}
Similarly,
\begin{equation}
\begin{aligned}\mathbb{E}\big[(e^{\theta_{k}+Y_{k}}-1-Y_{k})^{4p}\big] & \le c_{p}\mathbb{E}\big[e^{4p\theta_{k}}(e^{Y_{k}}-1-Y_{k})^{4p}+(e^{\theta_{k}}-1)^{4p}(1+Y_{k})^{4p}\big]\\
 & \le c_{p,M,T}\big[\mathbb{E}[(e^{Y_{k}}-1-Y_{k})^{4p}]+(e^{\theta_{k}}-1)^{4p}(1+\mathbb{E}(Y_{k}^{4p}))\big]\\
 & \le c_{p,M,T}\big[\mathbb{E}[(Y_{k}^{2}e^{|Y_{k}|})^{4p}]+(e^{c_{M,T}\Delta t}-1)^{4p}\big]\\
 & \le c_{p,M,T}\big[\mathbb{E}(Y_{k}^{16p})^{1/2}\mathbb{E}(e^{8p|Y_{k}|})^{1/2}+(e^{c_{M,T}\Delta t}-1)^{4p}\big]\\
 & \le c_{p,M,T}\big[(\Delta t)^{4p}+(e^{c_{M,T}\Delta t}-1)^{4p}\big].
\end{aligned}
\label{eq:-122}
\end{equation}
By (\ref{eq:-123}), (\ref{eq:-121}), and (\ref{eq:-122}), we deduce
that
\begin{equation}
\sup_{n<N}\mathbb{E}\big(|u_{n}^{(N)}-v_{n}^{(N)}|^{2p}\big)\le c_{p,M,T}\Big[1+\Big(\frac{e^{c_{M,T}\Delta t}-1}{\Delta t}\Big)^{2}\Big]^{1/2}\big[(e^{c_{M,T}\Delta t}-1)^{4p}+(\Delta t)^{2p}\big]^{1/2}\to0,\label{eq:-124}
\end{equation}
as $N\to\infty$. By the monotonicity of $h^{\prime}$,
\[
\begin{aligned}\mathbb{E}\big[|h(u_{n}^{(N)})-h(v_{n}^{(N)})|^{p}\big] & \le\mathbb{E}\big[\big(|h^{\prime}(u_{n}^{(N)})|^{p}+|h^{\prime}(v_{n}^{(N)})|^{p}\big)\cdot|u_{n}^{(N)}-v_{n}^{(N)}|^{p}\big]\\
 & \le\big[\mathbb{E}\big(|h^{\prime}(u_{n}^{(N)})|^{2p}\big)^{1/2}+\mathbb{E}\big(|h^{\prime}(v_{n}^{(N)})|^{2p}\big)^{1/2}\big]\mathbb{E}\big(|u_{n}^{(N)}-v_{n}^{(N)}|^{2p}\big)^{1/2}.
\end{aligned}
\]
Applying (\ref{eq:-79}) and (\ref{eq:-33}) to $|h^{\prime}|^{2p}$
gives that
\[
\sup_{N\ge1}\sup_{n<N}\mathbb{E}\big(|h^{\prime}(u_{n}^{(N)})|^{2p}\big)^{1/2}+\mathbb{E}\big(|h^{\prime}(v_{n}^{(N)})|^{2p}\big)^{1/2}\le c_{p,\lambda,v_{0},\sigma_{\ast}}<\infty,
\]
which, together with (\ref{eq:-124}) implies (\ref{eq:-31}).
\end{proof}
The following proposition shows that $I_{T}^{(N)}$ converges in distribution
if and only if $X_{T}^{(N)}$ converges in distribution.
\begin{prop}
\label{prop:-4}Let $u_{T}^{(N)}$ be the random variable defined
in (\ref{eq:-14}). Then the random variable
\begin{equation}
\begin{aligned} & \sum_{k=1}^{N}\Big(\dfrac{\bar{\sigma}}{\sqrt{v_{k-1}^{(N)}}}-\frac{\bar{\sigma}}{\sqrt{u_{k-1}^{(N)}}}\Big)\int_{t_{k-1}}^{t_{k}}[\rho(t)-r(t)]dt\\
 & +\frac{1}{2}\sum_{k=1}^{N}\Big(\dfrac{\bar{\sigma}^{2}}{v_{k-1}^{(N)}}-\dfrac{\bar{\sigma}^{2}}{u_{k-1}^{(N)}}\Big)\int_{t_{k-1}}^{t_{k}}\sigma(t)^{2}dt+\sum_{k=1}^{N}\Big(\dfrac{\bar{\sigma}}{\sqrt{v_{k-1}^{(N)}}}-\frac{\bar{\sigma}}{\sqrt{u_{k-1}^{(N)}}}\Big)\int_{t_{k-1}}^{t_{k}}\sigma(t)dW_{t}
\end{aligned}
\label{eq:-20}
\end{equation}
 converges in distribution to zero as $N\to\infty$. Therefore, $I_{T}^{(N)}$
converges in distribution if and only if $X_{T}^{(N)}$ converges
in distribution to the same limiting distribution.
\end{prop}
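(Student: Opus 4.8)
The plan is to identify the random variable (\ref{eq:-20}) with $\log I_T^{(N)}-\log X_T^{(N)}$ (subtracting (\ref{eq:-127}) from (\ref{eq:-64}): the $I_0$- and $\int_0^T r$-terms cancel, and the evident typo $\bar\sigma^2/v_{k-1}^{(N)}-\bar\sigma^2/v_{k-1}^{(N)}$ in the middle line should read $\bar\sigma^2/v_{k-1}^{(N)}-\bar\sigma^2/u_{k-1}^{(N)}$), and then to show that this difference converges to $0$ in $L^1(\Omega)$, hence in probability. Note first that $v_n^{(N)}\ge\lambda^n v_0>0$ and $u_n^{(N)}\ge\lambda^n v_0>0$ by (\ref{eq:-36}) and (\ref{eq:-14}), so all negative powers are well defined. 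Write $h_1(x)=\bar\sigma x^{-1/2}$ and $h_2(x)=\bar\sigma^2 x^{-1}$; each is a constant multiple of $x^{-\alpha}$ with $\alpha>0$, so Lemma \ref{lem:-5}, in particular the uniform convergence (\ref{eq:-31}), applies: for every $p\ge1$, $\sup_{n<N}\mathbb{E}\,|h_i(v_n^{(N)})-h_i(u_n^{(N)})|^p\to0$ as $N\to\infty$.

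I would estimate the three sums in (\ref{eq:-20}) separately. For the two drift sums — the one carrying $\int_{t_{k-1}}^{t_k}(\rho(t)-r(t))dt$ and the one carrying $\int_{t_{k-1}}^{t_k}\sigma(t)^2dt$ — the inner time integrals are bounded by a constant times $\Delta t$, using $\rho,r\in L^\infty(0,T)$ and (\ref{eq:-80}). Hence the triangle inequality gives, for $i=1$ and $i=2$ respectively, an $L^1$-bound of the form $C\,N\Delta t\,\sup_{n<N}\mathbb{E}\,|h_i(v_n^{(N)})-h_i(u_n^{(N)})|$, and since $N\Delta t=T$ this tends to $0$ by (\ref{eq:-31}) with $p=1$.

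The stochastic-integral sum $\sum_{k=1}^N\big(h_1(v_{k-1}^{(N)})-h_1(u_{k-1}^{(N)})\big)\int_{t_{k-1}}^{t_k}\sigma(t)dW_t$ is the one step where a term-by-term bound is too lossy and the martingale structure must be used; I expect this to be the main (if modest) obstacle. The coefficient $h_1(v_{k-1}^{(N)})-h_1(u_{k-1}^{(N)})$ is $\mathcal{F}_{t_{k-1}}$-measurable, while $\int_{t_{k-1}}^{t_k}\sigma(t)dW_t$ is mean-zero and independent of $\mathcal{F}_{t_{k-1}}$ with conditional second moment $\int_{t_{k-1}}^{t_k}\sigma(t)^2dt$; thus the summands are orthogonal and conditioning on $\mathcal{F}_{t_{k-1}}$ yields
\begin{align*}
\mathbb{E}\Big[\Big(\sum_{k=1}^N\big(h_1(v_{k-1}^{(N)})-h_1(u_{k-1}^{(N)})\big)\textstyle\int_{t_{k-1}}^{t_k}\sigma(t)dW_t\Big)^2\Big]
&=\sum_{k=1}^N\mathbb{E}\big[\big(h_1(v_{k-1}^{(N)})-h_1(u_{k-1}^{(N)})\big)^2\big]\int_{t_{k-1}}^{t_k}\sigma(t)^2dt\\
&\le(\sigma^\ast)^2T\,\sup_{n<N}\mathbb{E}\,\big|h_1(v_n^{(N)})-h_1(u_n^{(N)})\big|^2,
\end{align*}
which tends to $0$ by (\ref{eq:-31}) with $p=2$; so this sum vanishes in $L^2$, hence in $L^1$.

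Combining the three estimates shows that (\ref{eq:-20}), equal to $\log I_T^{(N)}-\log X_T^{(N)}=:A_N$, converges to $0$ in $L^1$, hence in probability, so $e^{A_N}\to1$ in probability. Since $I_T^{(N)}=X_T^{(N)}e^{A_N}$, Slutsky's theorem gives that if $X_T^{(N)}$ converges in law to some $X$ then $I_T^{(N)}$ converges in law to $X$ as well, and conversely $X_T^{(N)}=I_T^{(N)}e^{-A_N}$ converges in law whenever $I_T^{(N)}$ does. This is exactly the claim of the proposition.
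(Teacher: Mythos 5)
Your proof is correct and follows essentially the same route as the paper: identify (\ref{eq:-20}) with $\log I_T^{(N)}-\log X_T^{(N)}$ (with the same reading of the typo), control the two drift sums via the uniform bound (\ref{eq:-31}) of Lemma \ref{lem:-5} together with the $O(\Delta t)$ size of the inner time integrals, and handle the stochastic-integral sum by the martingale second-moment computation, again invoking (\ref{eq:-31}) with $p=2$. The only cosmetic difference is that you bound the drift sums in $L^1$ with $p=1$ while the paper bounds their averages in $L^2$ with $p=2$, and you make the concluding Slutsky step explicit where the paper leaves it implicit.
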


\begin{proof}
For the first term in (\ref{eq:-20}), applying (\ref{eq:-31}) to
$h(x)=x^{-1/2}$ and $p=2$ gives
\begin{equation}
\lim_{N\to\infty}\sup_{n<N}\mathbb{E}\big[\big|\big(v_{n}^{(N)}\big)^{-1/2}-\big(u_{n}^{(N)}\big)^{-1/2}\big|^{2}\big]=0.\label{eq:-126}
\end{equation}
Therefore,
\begin{equation}
\mathbb{E}\Big[\Big(N^{-1}\sum_{k=1}^{N}\big[\big(v_{k-1}^{(N)}\big)^{-1/2}-\big(u_{k-1}^{(N)}\big)^{-1/2}\big]\Big)^{2}\Big]\le N^{-1}\sum_{k=1}^{N}\mathbb{E}\big[\big|\big(v_{k-1}^{(N)}\big)^{-1/2}-\big(u_{k-1}^{(N)}\big)^{-1/2}\big|^{2}\big]\to0\label{eq:-32}
\end{equation}
as $N\to\infty$, which, together with the boundedness of $\rho(t)$
and $r(t)$, implies that 
\[
\sum_{k=1}^{N}\Big(\dfrac{\bar{\sigma}}{\sqrt{v_{k-1}^{(N)}}}-\frac{\bar{\sigma}}{\sqrt{u_{k-1}^{(N)}}}\Big)\int_{t_{k-1}}^{t_{k}}[\rho(t)-r(t)]dt\to0
\]
in distribution as $N\to\infty$.

For the second term in (\ref{eq:-20}), applying (\ref{eq:-31}) to
$h(x)=x^{-1}$ and $p=2$ gives that $\sup_{n<N}\mathbb{E}[|(v_{n}^{(N)})^{-1}-(u_{n}^{(N)})^{-1}|^{2}]\to0$
as $N\to\infty$. Note that $\int_{t_{k-1}}^{t_{k}}\sigma(t)^{2}dt\le c_{\sigma^{\ast}}T/N$.
Therefore,
\begin{align*}
\mathbb{E}\Big[\Big(\sum_{k=1}^{N}\big[\big(v_{k-1}^{(N)}\big)^{-1}-\big(u_{k-1}^{(N)}\big)^{-1}\big]\int_{t_{k-1}}^{t_{k}}\sigma(t)^{2}dt\Big)^{2}\Big] & \le c_{\sigma^{\ast}}\mathbb{E}\Big[\Big(N^{-1}\sum_{k=1}^{N}\big|\big(v_{k-1}^{(N)}\big)^{-1}-\big(u_{k-1}^{(N)}\big)^{-1}\big|\Big)^{2}\Big]\to0
\end{align*}
as $N\to\infty$. Hence, $\sum_{k=1}^{N}\Big(\dfrac{\bar{\sigma}^{2}}{v_{k-1}^{(N)}}-\dfrac{\bar{\sigma}^{2}}{u_{k-1}^{(N)}}\Big)\int_{t_{k-1}}^{t_{k}}\sigma(t)^{2}dt\to0$
in distribution as $N\to\infty$.

For the last term, let
\[
M_{n}=\sum_{k=1}^{n}\Big(\dfrac{\bar{\sigma}}{\sqrt{v_{k-1}^{(N)}}}-\frac{\bar{\sigma}}{\sqrt{u_{k-1}^{(N)}}}\Big)\int_{t_{k-1}}^{t_{k}}\sigma(t)dW_{t}.
\]
Then $\{M_{n}\}_{n\ge1}$ is a martingale. Moreover,
\begin{align*}
\mathbb{E}(M_{N}^{2}) & =\sum_{k=1}^{N}\mathbb{E}\Big[\Big(\dfrac{\bar{\sigma}}{\sqrt{v_{k-1}^{(N)}}}-\frac{\bar{\sigma}}{\sqrt{u_{k-1}^{(N)}}}\Big)^{2}\Big(\int_{t_{k-1}}^{t_{k}}\sigma(t)dW_{t}\Big)^{2}\Big]\\
 & =\bar{\sigma}^{2}\sum_{k=1}^{N}\mathbb{E}\big[\big|\big(v_{k-1}^{(N)}\big)^{-1/2}-\big(u_{k-1}^{(N)}\big)^{-1/2}\big|^{2}\big]\mathbb{E}\Big[\Big(\int_{t_{k-1}}^{t_{k}}\sigma(t)dW_{t}\Big)^{2}\Big]\\
 & \le\frac{(\bar{\sigma}\sigma^{\ast})^{2}T}{N}\sum_{k=1}^{N}\mathbb{E}\big[\big|\big(v_{k-1}^{(N)}\big)^{-1/2}-\big(u_{k-1}^{(N)}\big)^{-1/2}\big|^{2}\big].
\end{align*}
By the above inequality and (\ref{eq:-126}), we deduce that 
\[
\sum_{k=1}^{n}\Big(\dfrac{\bar{\sigma}}{\sqrt{v_{k-1}^{(N)}}}-\frac{\bar{\sigma}}{\sqrt{u_{k-1}^{(N)}}}\Big)\int_{t_{k-1}}^{t_{k}}\sigma(t)dW_{t}\to0
\]
in distribution as $N\to\infty$. This completes the proof.
\end{proof}

\subsection{\label{subsec:-1}Proof of Theorem \ref{thm:-3}: the constant coefficient
case}

In this subsection, we prove part (i) of Theorem \ref{thm:-3} under
the assumption that $r(t)=r$, $\rho(t)=\rho$, and $\sigma(t)=\sigma$
are constants. The derivation of the upper and lower bounds in part
(ii) is given in Section \ref{sec:}. Under the constant coefficient
assumption, the random variable $\log(X_{T}^{(N)}/I_{0})$ can be
written as
\begin{equation}
\begin{aligned}\log\frac{X_{T}^{(N)}}{I_{0}} & =rT+\frac{(\rho-r)T}{N}\sum_{k=1}^{N}\dfrac{\bar{\sigma}\sigma^{-1}}{\sqrt{\sigma^{-2}u_{k-1}^{(N)}}}\\
 & -\frac{T}{2N}\sum_{k=1}^{N}\dfrac{\bar{\sigma}^{2}}{\sigma^{-2}u_{k-1}^{(N)}}+\sqrt{\frac{T}{N}}\sum_{k=1}^{N}\dfrac{\bar{\sigma}}{\sqrt{\sigma^{-2}u_{k-1}^{(N)}}}Z_{k},
\end{aligned}
\label{eq:-22}
\end{equation}
and $u_{n}^{(N)}$ can be written as 
\begin{equation}
\sigma^{-2}u_{n}^{(N)}=\lambda^{n}\sigma^{-2}v_{0}+(1-\lambda)\sum_{k=1}^{n}\lambda^{n-k}Z_{k}^{2},\label{eq:-23}
\end{equation}
where $Z_{n}=\Delta W_{t_{n}}/\sqrt{\Delta t}$, $n=1,\dots,N$ are
i.i.d. standard random normal distributions.
\begin{prop}
\label{prop:-5}Let $\{u_{n}^{(N)}\}_{n\ge0}$ be the sequence of
random variables defined in (\ref{eq:-14}). Then
\begin{equation}
\lim_{N\to\infty}\frac{1}{N}\sum_{k=1}^{N}\frac{1}{\sqrt{\sigma^{-2}u_{k-1}^{(N)}}}=U(\lambda)\quad\text{a.s.},\label{eq:-4}
\end{equation}
\begin{equation}
\lim_{N\to\infty}\frac{1}{N}\sum_{k=1}^{N}\dfrac{1}{\sigma^{-2}u_{k-1}^{(N)}}=V(\lambda)\quad\text{a.s.},\label{eq:-21}
\end{equation}
and 
\begin{equation}
\frac{1}{\sqrt{N}}\sum_{k=1}^{N}\frac{Z_{k}}{\sqrt{\sigma^{-2}u_{k-1}^{(N)}}}\to\mathcal{N}\big(0,V(\lambda)\big)\label{eq:-5}
\end{equation}
in distribution, where $U(\lambda)$ and $V(\lambda)$ are defined
in (\ref{eq:-16}) and (\ref{eq:-3}) respectively. Therefore, 
\begin{equation}
\log X_{T}^{(N)}\to\mathcal{N}\big((r+(\rho-r)\bar{\sigma}\sigma^{-1}-\bar{\sigma}^{2}V(\lambda)/2)T,\bar{\sigma}^{2}T\Big)\label{eq:-15}
\end{equation}
in distribution, where $X_{T}^{(N)}$ is the random variable defined
in (\ref{eq:-35}).
\end{prop}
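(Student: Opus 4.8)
The plan is to recognise, after renormalising, the realised variance $\omega_n:=\sigma^{-2}u_n^{(N)}$ as an instance of the sequence (\ref{eq:-57}) from Theorem \ref{thm:-4}, and then to read (\ref{eq:-4}), (\ref{eq:-21}), (\ref{eq:-5}) off directly from Theorems \ref{thm:-4} and \ref{thm:-5}, identifying the abstract limiting integrals with $U(\lambda)$ and $V(\lambda)$ by a change of variables. First I would note that, by the scaling property of Brownian motion, for each fixed $N$ the increments $Z_k=\Delta W_{t_k}/\sqrt{\Delta t}$, $k=1,\dots,N$, are i.i.d.\ $\mathcal N(0,1)$, so we may realise all the arrays on one probability space carrying a single fixed i.i.d.\ standard normal sequence $\{Z_k\}_{k\ge1}$, with $Z_1,\dots,Z_N$ serving as the increments for level $N$; then by (\ref{eq:-23}) the quantity $\omega_n=\lambda^n\sigma^{-2}v_0+(1-\lambda)\sum_{k=1}^n\lambda^{n-k}Z_k^2$ no longer depends on $N$ and is precisely (\ref{eq:-57}) with $\lambda_k=(1-\lambda)\lambda^k$, $\xi_k=Z_k^2$ and $\omega_0=\sigma^{-2}v_0$. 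The hypotheses are then routine: $\sum_{k\ge0}\lambda_k=1$ and $\gamma^{-n}\lambda_n\equiv1-\lambda$ with $\gamma=\lambda\in(0,1)$; the Laplace transform $\phi(t)=\mathbb E(e^{-tZ_1^2})=(1+2t)^{-1/2}$ satisfies (\ref{eq:-44}) with $\alpha=1$, $\beta=\tfrac12$, $c=1$ (upper bound since $1+2t\ge1+t$; lower bound $1-t\le(1+2t)^{-1/2}$ being elementary for $t\ge0$, as one sees by squaring); and the two test functions $F(s)=s^{-1/2}$ and $F(s)=s^{-1}$ have non-negative inverse Laplace transforms $f(t)=\pi^{-1/2}t^{-1/2}$ and $f\equiv1$, of at most polynomial growth, so (P.1) and (P.2) hold.

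Next I would apply the two limit theorems. For (\ref{eq:-4}) take $F(s)=s^{-1/2}$ in Theorem \ref{thm:-4}: the a.s.\ limit is $\int_0^\infty\pi^{-1/2}t^{-1/2}\prod_{k\ge0}(1+2(1-\lambda)\lambda^kt)^{-1/2}\,dt$, and the substitution $t=u^2/(2(1-\lambda))$ turns it into $\sqrt{\frac{2}{\pi(1-\lambda)}}\int_0^\infty\prod_{k\ge0}(1+\lambda^ku^2)^{-1/2}\,du=U(\lambda)$. For (\ref{eq:-21}) take $F(s)=s^{-1}$, so $f\equiv1$: the a.s.\ limit is $\int_0^\infty\prod_{k\ge0}(1+2(1-\lambda)\lambda^kt)^{-1/2}\,dt$, which under $t=u/(2(1-\lambda))$ becomes $\frac{1}{2(1-\lambda)}\int_0^\infty\prod_{k\ge0}(1+\lambda^ku)^{-1/2}\,du=V(\lambda)$. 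For (\ref{eq:-5}) apply Theorem \ref{thm:-5} with the same $F(s)=s^{-1/2}$ and $\eta_k=Z_k$: since $Z_{k+1}$ is independent of $\sigma(Z_1,\dots,Z_k)$, $\mathbb E(Z_k)=0$, $\mathbb E(Z_k^2)=1$ and $\sup_k\mathbb E|Z_k|^4<\infty$, the hypotheses hold, and the limiting variance $\int_0^\infty(f\ast f)(t)\prod_{k\ge0}\phi(\lambda_kt)\,dt$ equals $V(\lambda)$ as well, because $\mathcal L(f\ast f)=\mathcal L(f)^2=s^{-1}$ forces $f\ast f\equiv1$ and the integral reduces to the one just computed.

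Finally I would assemble (\ref{eq:-15}) by substituting (\ref{eq:-4}), (\ref{eq:-21}), (\ref{eq:-5}) into the decomposition (\ref{eq:-22}): the second term of (\ref{eq:-22}) tends a.s.\ to $(\rho-r)\bar\sigma\sigma^{-1}U(\lambda)T$, the third a.s.\ to $-\tfrac12\bar\sigma^2V(\lambda)T$, and the fourth, being $\bar\sigma\sqrt T$ times the sum in (\ref{eq:-5}), tends in distribution to $\mathcal N(0,\bar\sigma^2V(\lambda)T)$; since almost sure convergence implies convergence in probability, Slutsky's theorem yields the asymptotic normality of $\log X_T^{(N)}$ with the drift and variance obtained by inserting these limits. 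The argument is essentially bookkeeping once Theorems \ref{thm:-4} and \ref{thm:-5} are in hand; the two places needing a little care are (i) replacing the $N$-dependent triangular array $\{Z_k=\Delta W_{t_k}/\sqrt{\Delta t}\}_{1\le k\le N}$ by one fixed i.i.d.\ sequence, which is legitimate because the joint law of $(Z_1,\dots,Z_N)$ does not depend on $N$ and the assertions are either a.s.\ limits transported along this identification or purely distributional, and (ii) the two substitutions, which must reproduce the closed forms $U(\lambda)$, $V(\lambda)$ exactly. I expect (i) and (ii), rather than any genuine analysis, to be the only (mild) obstacles.
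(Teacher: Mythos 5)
Your proposal is correct and follows essentially the same route as the paper: apply Theorem \ref{thm:-4} with $F(s)=s^{-1/2}$ and with $F(s)=s^{-1}$ (the paper phrases the latter as $F^{2}$ with $f\ast f\equiv1$, which is the same computation), apply Theorem \ref{thm:-5} with $\eta_{k}=Z_{k}$, identify the limiting integrals with $U(\lambda)$ and $V(\lambda)$ by the same changes of variables, and assemble via (\ref{eq:-22}) and Slutsky. Your extra care about realising the triangular array on a single i.i.d.\ normal sequence, and your assembled limit law with mean $(r+(\rho-r)\bar{\sigma}\sigma^{-1}U(\lambda)-\bar{\sigma}^{2}V(\lambda)/2)T$ and variance $\bar{\sigma}^{2}V(\lambda)T$, are both correct refinements of what the paper states (the displayed (\ref{eq:-15}) omits the factors $U(\lambda)$ and $V(\lambda)$, evidently a typo, since the paper itself uses your form in Proposition \ref{prop:-2}).
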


\begin{proof}
Let $F(s)=s^{-1/2}$, $\xi_{n}=Z_{n}^{2}$. Then $\mathcal{L}(F)(t)=\sqrt{\pi}t^{-1/2}=\sqrt{\pi}F(t)$,
and therefore $f(t)=\mathcal{L}^{-1}(F)(t)=\frac{1}{\sqrt{\pi t}}$.
Clearly $f(t)$ satisfies the property \ref{eq:-43}. Moreover, $\phi(t)=\mathbb{E}(e^{-t\xi_{1}})=(1+2t)^{-1/2}$
satisfies the condition (\ref{eq:-44}). Applying Theorem \ref{thm:-4}
to $F$, $\xi_{n}$, and $\lambda_{n}=(1-\lambda)\lambda^{n}$ gives
that 
\[
\begin{aligned}\lim_{N\to\infty}\frac{1}{N}\sum_{k=1}^{N}\frac{1}{\sqrt{\sigma^{-2}u_{k-1}^{(N)}}} & =\lim_{N\to\infty}\frac{1}{N}\sum_{k=0}^{N-1}F\big(\sigma^{-2}u_{k-1}^{(N)}\big)\\
 & =\int_{0}^{\infty}\frac{1}{\sqrt{\pi t}}\prod_{k=0}^{\infty}\big[1+2(1-\lambda)\lambda^{k}t\big]^{-1/2}dt=U(\lambda)\quad\text{a.s.},
\end{aligned}
\]
where the last equation is obtained by a simple change of variable. 

Furthermore, since $\mathcal{L}(1)=t^{-1}=F(t)^{2}$, we see that
$f\ast f=\mathcal{L}^{-1}(F^{2})=1$. Applying Theorem \ref{thm:-4}
to $F^{2}$, $\xi_{n}$, and $\lambda_{n}=(1-\lambda)\lambda^{n}$
gives that
\[
\begin{aligned}\lim_{N\to\infty}\frac{1}{N}\sum_{k=1}^{N}\frac{1}{\sigma^{-2}u_{k-1}^{(N)}} & =\lim_{N\to\infty}\frac{1}{N}\sum_{k=0}^{N-1}F\big(\sigma^{-2}u_{k-1}^{(N)}\big)^{2}\\
 & =\int_{0}^{\infty}\prod_{k=0}^{\infty}\big[1+2(1-\lambda)\lambda^{k}t\big]^{-1/2}dt=V(\lambda)\quad\text{a.s.},
\end{aligned}
\]
where the last equation is obtained by a simple change of variable.
Moreover, by Theorem \ref{thm:-5}, we deduce that
\[
\frac{1}{\sqrt{N}}\sum_{k=1}^{N}\frac{Z_{k}}{\sigma_{k-1}}=\frac{1}{\sqrt{N}}\sum_{k=1}^{N}F(\sigma_{k-1}^{2})Z_{k}\to\mathcal{N}\big(0,V(\lambda)\big).
\]
This completes the proof.

The convergence (\ref{eq:-15}) follows immediately from (\ref{eq:-4})
and (\ref{eq:-5}).
\end{proof}
The convergence of marginal distributions of $\log(I_{T}^{(N)})$
follows easily from Proposition \ref{prop:-5}. To deduce convergence
in law, the tightness of the family $\log(I^{(N)})$ is required.
\begin{lem}
\label{lem:-10}Let $a_{1},\dots,a_{n}\in\mathbb{R}$ and $\xi_{1},\dots,\xi_{n}$
be a sequence of random variables. Then, for any $1<p<\infty$,
\begin{equation}
\mathbb{E}\Big(\Big|\sum_{k=1}^{n}a_{k}\xi_{k}\Big|^{p}\Big)\le\Big(\sum_{k=1}^{n}|a_{k}|\Big)^{p}\max_{1\le k\le n}\mathbb{E}(|\xi_{k}|^{p}).\label{eq:-134}
\end{equation}
\end{lem}

\begin{proof}
We may assume $A=\sum_{k=1}^{n}|a_{k}|>0$. By H\"{o}lder's inequality
applied to the discrete probability measure $p_{k}=A^{-1}|a_{k}|$
on $\{1,2,\dots,n\}$, we have
\[
A^{-p}\Big(\sum_{k=1}^{n}|a_{k}\xi_{k}|\Big)^{p}=\Big(\sum_{k=1}^{n}|\xi_{k}|p_{k}\Big)^{p}\le\sum_{k=1}^{n}|\xi_{k}|^{p}p_{k}.
\]
Therefore,
\[
A^{-p}\mathbb{E}\Big(\Big|\sum_{k=1}^{n}a_{k}\xi_{k}\Big|^{p}\Big)\le\sum_{k=1}^{n}\mathbb{E}(|\xi_{k}|^{p})p_{k}\le\max_{1\le k\le n}\mathbb{E}(|\xi_{k}|^{p}).
\]
This completes the proof.
\end{proof}
\begin{lem}
\label{lem:-11}Let $s,t\in[0,T]$ and $1<p<\infty$. Then
\begin{equation}
\mathbb{E}\big(\big|\log I_{t}^{(N)}-\log I_{s}^{(N)}\big|^{p}\big)\le L_{p,\lambda,v_{0},\bar{\sigma},r,\rho,\sigma,T}|t-s|^{p/2},\label{eq:-135}
\end{equation}
where $L_{p,\lambda,v_{0},\bar{\sigma},r,\rho,\sigma,T}$ is a constant
depending only on $p,\,\lambda,\,v_{0},\,\bar{\sigma},\,\Vert r\Vert_{L^{\infty}},\,\Vert\rho\Vert_{L^{\infty}},\,\Vert\sigma\Vert_{L^{\infty}},\,\sigma_{\ast},\,T$.
\end{lem}

\begin{proof}
We may assume $s<t$. We first prove (\ref{eq:-135}) for the case
when $s$ and $t$ are grid points, i.e. $s=t_{\ell}=\ell\Delta t$
and $t=t_{m}=m\Delta t$ for some $0\le\ell<m\le N$. By (\ref{eq:-64}),
\[
\begin{aligned}\log I_{t_{m}}^{(N)}-\log I_{t_{\ell}}^{(N)} & =\int_{t_{\ell}}^{t_{m}}r(u)du+\sum_{k=\ell+1}^{m}\frac{\bar{\sigma}}{\sqrt{v_{k-1}^{(N)}}}\int_{t_{k-1}}^{t_{k}}[\rho(u)-r(u)]du\\
 & \quad-\frac{1}{2}\sum_{k=\ell+1}^{m}\frac{\bar{\sigma}^{2}}{v_{k-1}^{(N)}}\int_{t_{k-1}}^{t_{k}}\sigma(u)^{2}du+\sum_{k=\ell+1}^{m}\frac{\bar{\sigma}}{\sqrt{v_{k-1}^{(N)}}}\int_{t_{k-1}}^{t_{k}}\sigma(u)dW_{u}.
\end{aligned}
\]
Therefore,
\begin{equation}
\begin{aligned} & \mathbb{E}\big(\big|\log I_{t_{m}}^{(N)}-\log I_{t_{\ell}}^{(N)}\big|^{p}\big)\\
 & \le c_{p}\Big[\Big|\int_{t_{\ell}}^{t_{m}}r(u)du\Big|^{p}+\mathbb{E}\Big(\Big|\sum_{k=\ell+1}^{m}\frac{\bar{\sigma}}{\sqrt{v_{k-1}^{(N)}}}\int_{t_{k-1}}^{t_{k}}[\rho(u)-r(u)]du\Big|^{p}\Big)\\
 & \quad+\mathbb{E}\Big(\Big|\sum_{k=\ell+1}^{m}\frac{\bar{\sigma}^{2}}{v_{k-1}^{(N)}}\int_{t_{k-1}}^{t_{k}}\sigma(u)^{2}du\Big|^{p}\Big)+\mathbb{E}\Big(\Big|\sum_{k=\ell+1}^{m}\frac{\bar{\sigma}}{\sqrt{v_{k-1}^{(N)}}}\int_{t_{k-1}}^{t_{k}}\sigma(u)dW_{u}\Big|^{p}\Big)\\
 & =c_{p}\Big[\Big|\int_{t_{\ell}}^{t_{m}}r(u)du\Big|^{p}+E_{1}+E_{2}+E_{3}\Big].
\end{aligned}
\label{eq:-136}
\end{equation}
Clearly,
\begin{equation}
\Big|\int_{t_{\ell}}^{t_{m}}r(u)du\Big|^{p}\le\Vert r\Vert_{L^{\infty}}^{p}|t_{m}-t_{\ell}|^{p}.\label{eq:-137}
\end{equation}
By Lemma \ref{lem:-10}, we deduce that
\begin{equation}
\begin{aligned}E_{1} & \le\bar{\sigma}^{p}\Big|\int_{t_{\ell}}^{t_{m}}[\rho(u)-r(u)]du\Big|^{p}\max_{\ell\le k<m}\mathbb{E}\big[\big(v_{k}^{(N)}\big)^{-p/2}\big]\\
 & \le\bar{\sigma}^{p}\big(\Vert\rho\Vert_{L^{\infty}}+\Vert r\Vert_{L^{\infty}}\big)^{p}|t_{m}-t_{\ell}|^{p}\max_{\ell\le k<m}\mathbb{E}\big[\big(v_{k}^{(N)}\big)^{-p/2}\big],
\end{aligned}
\label{eq:-138}
\end{equation}
and
\begin{equation}
\begin{aligned}E_{2} & \le\bar{\sigma}^{2p}\Big|\int_{t_{\ell}}^{t_{m}}\sigma(u)^{2}du\Big|^{p}\max_{\ell\le k<m}\mathbb{E}\big[\big(v_{k}^{(N)}\big)^{-p}\big]\\
 & \le\bar{\sigma}^{2p}\Vert\sigma\Vert_{L^{\infty}}^{2p}|t_{m}-t_{\ell}|^{p}\max_{\ell\le k<m}\mathbb{E}\big[\big(v_{k}^{(N)}\big)^{-p}\big].
\end{aligned}
\label{eq:-139}
\end{equation}
For $E_{3}$, by the Burkholder--Davis--Gundy inequality,
\begin{equation}
E_{3}\le c_{p}\mathbb{E}\Big(\Big|\sum_{k=\ell+1}^{m}\frac{\bar{\sigma}^{2}}{v_{k-1}^{(N)}}\int_{t_{k-1}}^{t_{k}}\sigma(u)^{2}du\Big|^{p/2}\Big).\label{eq:-140}
\end{equation}
Similar to (\ref{eq:-139}), we obtain from (\ref{eq:-140}) that
\begin{equation}
E_{3}\le c_{p}\bar{\sigma}^{p}\Vert\sigma\Vert_{L^{\infty}}^{p}|t_{m}-t_{\ell}|^{p/2}\max_{\ell\le k<m}\mathbb{E}\big[\big(v_{k}^{(N)}\big)^{-p/2}\big].\label{eq:-141}
\end{equation}
By Lemma \ref{lem:-5},
\begin{equation}
\sup_{N\ge1}\max_{k<N}\mathbb{E}\big[\big(v_{k}^{(N)}\big)^{-p/2}\big]\le\sup_{N\ge1}\max_{k<N}\mathbb{E}\big[\big(v_{k}^{(N)}\big)^{-p}\big]^{1/2}\le c_{p,\lambda,v_{0},\sigma_{\ast}}<\infty\label{eq:-142}
\end{equation}
for some $c_{p,\lambda,v_{0},\sigma_{\ast}}$ depending only on $p,\lambda,v_{0},\sigma_{\ast}$.
Combining (\ref{eq:-136}), (\ref{eq:-137}), (\ref{eq:-138}), (\ref{eq:-139}),
(\ref{eq:-141}) and (\ref{eq:-142}) gives
\begin{equation}
\begin{aligned} & \mathbb{E}\big(\big|\log I_{t_{m}}^{(N)}-\log I_{t_{\ell}}^{(N)}\big|^{p}\big)\\
 & \le c_{p,\lambda,v_{0},\sigma_{\ast}}\big[\big(\Vert r\Vert_{L^{\infty}}+\bar{\sigma}\Vert\rho\Vert_{L^{\infty}}+\bar{\sigma}\Vert r\Vert_{L^{\infty}}\big)^{p}|t_{m}-t_{\ell}|^{p}+\bar{\sigma}^{p}\Vert\sigma\Vert_{L^{\infty}}^{p}|t_{m}-t_{\ell}|^{p/2}\big]\\
 & \le c_{p,\lambda,v_{0},\sigma_{\ast}}\big[\big(\Vert r\Vert_{L^{\infty}}+\bar{\sigma}\Vert\rho\Vert_{L^{\infty}}+\bar{\sigma}\Vert r\Vert_{L^{\infty}}\big)^{p}T^{p/2}+\bar{\sigma}^{p}\Vert\sigma\Vert_{L^{\infty}}^{p}\big]|t_{m}-t_{\ell}|^{p/2}\\
 & =L_{p,\lambda,v_{0},\bar{\sigma},r,\rho,\sigma,T}|t_{m}-t_{\ell}|^{p/2}.
\end{aligned}
\label{eq:-143}
\end{equation}
This proves (\ref{eq:-135}) for the case when $s$, $t$ are grid
points.

For the general case, we consider the following two cases.

\textbf{Case 1}: When there is no grid point in $(s,t)$; that is,
$t_{m}\le s<t\le t_{m+1}$ for some $m$.

By (\ref{eq:-64}), we have
\begin{equation}
\begin{aligned}\log I_{t}^{(N)}-\log I_{s}^{(N)} & =\int_{s}^{t}r(u)du+\frac{\bar{\sigma}}{\sqrt{v_{m}^{(N)}}}\int_{s}^{t}[\rho(u)-r(u)]du\\
 & \quad-\frac{1}{2}\frac{\bar{\sigma}^{2}}{v_{m}^{(N)}}\int_{s}^{t}\sigma(u)^{2}du+\frac{\bar{\sigma}}{\sqrt{v_{m}^{(N)}}}\int_{s}^{t}\sigma(u)dW_{u}.
\end{aligned}
\label{eq:-144}
\end{equation}
Now (\ref{eq:-135}) follows easily from (\ref{eq:-144}), (\ref{eq:-142})
and the Burkholder--Davis--Gundy inequality.

\textbf{Case 2}: When there is at least one grid points in $(s,t)$;
that is, $t_{\ell-1}\le s<t_{\ell}\le t_{m}<t\le t_{m+1}$ for some
$\ell\le m$.

Applying the result of Case 1 to $(s,t_{\ell})$ and $(t_{m},t)$
gives
\[
\mathbb{E}\big(\big|\log I_{t_{\ell}}^{(N)}-\log I_{s}^{(N)}\big|^{p}\big)\le L_{p,\lambda,v_{0},\bar{\sigma},r,\rho,\sigma,T}|t_{\ell}-s|^{p/2},
\]
and
\[
\mathbb{E}\big(\big|\log I_{t}^{(N)}-\log I_{t_{m}}^{(N)}\big|^{p}\big)\le L_{p,\lambda,v_{0},\bar{\sigma},r,\rho,\sigma,T}|t-t_{m}|^{p/2},
\]
which, together with (\ref{eq:-143}), imply that
\[
\mathbb{E}\big(\big|\log I_{t}^{(N)}-\log I_{s}^{(N)}\big|^{p}\big)\le L_{p,\lambda,v_{0},\bar{\sigma},r,\rho,\sigma,T}\big(|t_{\ell}-s|^{p/2}+|t_{m}-t_{\ell}|^{p/2}+|t-t_{m}|^{p/2}\big)\le L_{p,\lambda,v_{0},\bar{\sigma},r,\rho,\sigma,T}|t-s|^{p/2}.
\]
This completes the proof of Lemma \ref{lem:-11}.
\end{proof}
\begin{prop}
\label{prop:-2}The results in part (i) of Theorem \ref{thm:-3} hold
under the addition assumption that $r(t)=r$, $\rho(t)=\rho$ and
$\sigma(t)=\sigma$ are constants.
\end{prop}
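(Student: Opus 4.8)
The plan is to glue together the two reductions already established: Proposition \ref{prop:-5}, which pins down the limiting law of the auxiliary time-$T$ marginal $X_T^{(N)}$, and Proposition \ref{prop:-4}, which says that $I_T^{(N)}$ and $X_T^{(N)}$ have the same limiting behaviour in distribution. First I would identify the target law explicitly. In the constant-coefficient regime the diffusion (\ref{eq:-39}) is a geometric Brownian motion, so
\[
X_T = I_0\exp\Big\{\big(r+(\rho-r)\sigma^{-1}\bar\sigma U(\lambda)-\tfrac{1}{2}\bar\sigma^2 V(\lambda)\big)T+\bar\sigma V(\lambda)^{1/2}W_T\Big\},
\]
and hence $\log X_T$ is Gaussian with mean $\log I_0+(r+(\rho-r)\sigma^{-1}\bar\sigma U(\lambda)-\tfrac{1}{2}\bar\sigma^2 V(\lambda))T$ and variance $\bar\sigma^2 V(\lambda)T$. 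It therefore suffices to prove that $\log I_T^{(N)}$ converges in distribution to this Gaussian.

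Next I would substitute the three convergences of Proposition \ref{prop:-5} into the decomposition (\ref{eq:-22}) of $\log(X_T^{(N)}/I_0)$. The second and third sums converge almost surely to the constants $(\rho-r)T\bar\sigma\sigma^{-1}U(\lambda)$ and $-\tfrac{T}{2}\bar\sigma^2 V(\lambda)$ by (\ref{eq:-4}) and (\ref{eq:-21}), while the last term, which is $\bar\sigma\sqrt{T}\cdot N^{-1/2}\sum_{k=1}^{N}Z_k(\sigma^{-2}u_{k-1}^{(N)})^{-1/2}$, converges in distribution to $\mathcal{N}(0,\bar\sigma^2 V(\lambda)T)$ by (\ref{eq:-5}). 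Slutsky's theorem then yields $\log X_T^{(N)}\to\mathcal{N}\big(\log I_0+(r+(\rho-r)\sigma^{-1}\bar\sigma U(\lambda)-\tfrac{1}{2}\bar\sigma^2 V(\lambda))T,\ \bar\sigma^2 V(\lambda)T\big)$, i.e. exactly the law of $\log X_T$ computed above (this is the content of (\ref{eq:-15})).

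Finally I would invoke Proposition \ref{prop:-4}: the random variable (\ref{eq:-20}) is precisely $\log I_T^{(N)}-\log X_T^{(N)}$ (compare (\ref{eq:-64}) and (\ref{eq:-127})), and it tends to $0$ in distribution, hence in probability since the limit is deterministic. Writing $\log I_T^{(N)}=(\log I_T^{(N)}-\log X_T^{(N)})+\log X_T^{(N)}$ and applying Slutsky again gives that $\log I_T^{(N)}$ converges to the same Gaussian, and then the continuous map $x\mapsto e^x$ gives $I_T^{(N)}\to X_T$ in distribution, which is part (i) of Theorem \ref{thm:-3} under the stated hypotheses.

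I do not expect a genuine obstacle, as all the analytic content is packaged in Propositions \ref{prop:-4} and \ref{prop:-5}; the only points needing care are the routine combination of almost-sure and in-distribution convergence through Slutsky's theorem, and the reading of "convergence in law of the process" as convergence of the one-dimensional marginal at the fixed time $T$ (which is all that is used in Proposition \ref{prop:-1}). If a genuine path-space statement were desired, one would additionally need the functional version of the martingale central limit theorem together with tightness in the Skorokhod space, but that is not required for the applications in this paper.
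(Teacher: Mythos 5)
Your handling of the time-$T$ marginal is correct and coincides with the paper's first step: substitute the three convergences of Proposition \ref{prop:-5} into the decomposition (\ref{eq:-22}), apply Slutsky, and use Proposition \ref{prop:-4} to transfer the result from $X_T^{(N)}$ to $I_T^{(N)}$ (your variance $\bar{\sigma}^{2}V(\lambda)T$ is the right one, consistent with (\ref{eq:-39})). The gap is in your last paragraph, where you reinterpret the claim as convergence of the one-dimensional marginal at the fixed time $T$ and declare the process-level statement unnecessary. Proposition \ref{prop:-2} asserts part (i) of Theorem \ref{thm:-3}, i.e.\ convergence in law of the \emph{process} $\{I_{t}^{(N)}\}_{t\ge0}$ to the diffusion (\ref{eq:-39}), and this stronger form is genuinely needed later: the general-case proof in Section \ref{subsec:-2} invokes Proposition \ref{prop:-2} on each subinterval $[s_{j},s_{j+1})$ in the form ``conditional on $\mathcal{F}_{s_{j}}$, $\log(\mathfrak{I}_{t}^{(N)}/\mathfrak{I}_{s_{j}}^{(N)})$ converges in distribution, with limit independent of $\mathcal{F}_{s_{j}}$,'' which is not a consequence of the single marginal at $T$.

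The missing step is short and does not require a functional martingale CLT or Skorokhod tightness, contrary to your closing remark. The paper closes it as follows: for any $0<s<t\le T$, run the same marginal argument on $[s,t]$ conditionally on $\mathcal{F}_{s}$ (the realised variance at time $s$ is $\mathcal{F}_{s}$-measurable and plays the role of the initial value $\omega_{0}$, on which the limits in Theorem \ref{thm:-4} and Theorem \ref{thm:-5} do not depend), obtaining that $\log(I_{t}^{(N)}/I_{s}^{(N)})$ converges conditionally to the Gaussian law in (\ref{eq:-37}). Since that limit law does not depend on $\mathcal{F}_{s}$, the limiting log-process has independent Gaussian increments with the stated mean and variance rates, hence is a scaled Brownian motion with drift, i.e.\ the limit is exactly the geometric Brownian motion (\ref{eq:-39}). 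You should add this conditioning argument (or an equivalent identification of the finite-dimensional distributions) to complete the proof in the form in which the proposition is actually used.
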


\begin{proof}
By (\ref{eq:-22}) and Proposition \ref{prop:-5}, we see that $\log(I_{T}^{(N)}/I_{t_{0}})$
converges in distribution to
\[
\mathcal{N}\big((r+(\rho-r)\bar{\sigma}\sigma^{-1}U(\lambda)-\bar{\sigma}^{2}V(\lambda)/2)T,\bar{\sigma}^{2}T\Big)
\]
as $\Delta t\to0$. To deduce convergence in law from the convergence
of marginal distribution, let $\{\mathcal{F}_{t}\}_{t\ge0}$ be the
filtration generated by the Brownian motion $\{W_{t}\}_{t\ge0}$,
and let $0<s<t\le T$. Conditional on $\mathcal{F}_{s}$, by the above
convergence of marginal distribution, we see that
\begin{equation}
\log\Big(\frac{I_{t}^{(N)}}{I_{s}^{(N)}}\Big)\to\mathcal{N}\big((r+(\rho-r)\bar{\sigma}\sigma^{-1}U(\lambda)-\bar{\sigma}^{2}V(\lambda)/2)(t-s),\bar{\sigma}^{2}(t-s)\Big).\label{eq:-37}
\end{equation}
Note that the limiting distribution in (\ref{eq:-37}) is independent
of $\mathcal{F}_{s}$, which implies the convergence of finite dimensional
distributions of $\{\log I_{t}^{(N)}\}_{t\ge0}$. By Lemma \ref{lem:-11}
and the Kolmogorov's tightness criterion, the sequence $\{\log I_{t}^{(N)}\}_{t\ge0}$
is tight in $C([0,T])$. Therefore, the sequence $\{\log I_{t}^{(N)}\}$
converges in law to a continuous process $\{\log X_{t}\}_{t\ge0}$
which has i.i.d. Wiener increments. Hence $\{\log X_{t}\}_{t\ge0}$
is a scaled Wiener process with drift, which completes the proof.
\end{proof}
\begin{rem}
\label{rem:}We end this subsection with some comments on how the
result of Proposition \ref{prop:-5} can be extended to the variant
of volatility target index defined by (\ref{eq:-85}). The analogues
of Proposition \ref{prop:-5} for this variant are the three convergence
\begin{align*}
\lim_{N\to\infty}\frac{1}{N}\sum_{n=1}^{N}\sigma w_{n-1}^{(N)} & =U(\lambda_{1},\lambda_{2},w^{\ast}),\\
\lim_{N\to\infty}\frac{1}{N}\sum_{n=1}^{N}\sigma^{2}\big(w_{n-1}^{(N)}\big)^{2} & =V(\lambda_{1},\lambda_{2},w^{\ast}),\\
\frac{1}{\sqrt{N}}\sum_{n=1}^{N}\sigma w_{n-1}^{(N)}Z_{n} & \to\mathcal{N}\big(0,V(\lambda_{1},\lambda_{2},w^{\ast})\big).
\end{align*}
As we see from the proof of Theorem \ref{thm:-4} and Theorem \ref{thm:-5},
the derivation of these convergence lies on the following key ingredients:
(i) convergence of $\mathbb{E}(w_{n}^{(N)})$ as $N\to\infty,n\to\infty$;
(ii) convergence of $\mathbb{E}[(w_{n}^{(N)})^{2}]$ as $N\to\infty,n\to\infty$;
and (iii) exponential decay of covariance $\text{Cov}(w_{n}^{(N)},w_{n+k}^{(N)})\le c\lambda^{k}$
for some $c$ and $\lambda$. The convergence of $\mathbb{E}(w_{n}^{(N)})$
and $\mathbb{E}[(w_{n}^{(N)})^{2}]$ can be established once convergence
in distribution of $w_{n}^{(N)}$ is obtained. To see the convergence
in distribution of $w_{n}^{(N)}$, let $F(x_{1},x_{2})=\min(x_{1},x_{2},w^{\ast})$.
Then $w_{n}^{(N)}=F(v_{n}^{(1,N)},v_{n}^{(2,N)})$. Note that the
characteristic functions of the random vector $(v_{n}^{(1,N)},v_{n}^{(2,N)})$
is
\[
\mathbb{E}\big[\exp\big(\text{i}\xi_{1}v_{n}^{(1,N)}+\text{i}\xi_{2}v_{n}^{(2,N)}\big)\big]=\prod_{k=0}^{n-1}[1-2\text{i}\xi_{1}(1-\lambda_{1})\lambda_{1}^{k}-2\text{i}\xi_{2}(1-\lambda_{2})\lambda_{2}^{k}]^{-1/2},
\]
which converges as $n\to\infty$. This implies that $(v_{n}^{(1,N)},v_{n}^{(2,N)})$,
and therefore $w_{n}^{(N)}$, converge in distribution. Moreover,
the expectations $\mathbb{E}(w_{n}^{(N)})$ and $\mathbb{E}[(w_{n}^{(N)})^{2}]$
can be expressed using the inverse Laplace transform of $F(x_{1},x_{2})$.
For the exponential covariance decay, note that $v_{n+k}^{(j,N)}=\lambda_{j}^{k}v_{n}^{(j,N)}+(1-\lambda_{j})\sum_{l=1}^{k}\lambda_{j}^{k-l}Z_{n+l}^{2}$
and therefore
\begin{align*}
\mathbb{E}\big[F(v_{n}^{(j,N)})F(v_{n+k}^{(j,N)})\big] & \le\mathbb{E}\Big[F(v_{n}^{(j,N)})F\Big((1-\lambda_{j})\sum_{l=1}^{k}\lambda_{j}^{k-l}Z_{n+l}^{2}\Big)\Big]\\
 & =\mathbb{E}\big[F(v_{n}^{(j,N)})\big]\mathbb{E}\Big[F\Big((1-\lambda_{j})\sum_{l=1}^{k}\lambda_{j}^{k-l}Z_{l}^{2}\Big)\Big]=\mathbb{E}\big[F(v_{n}^{(j,N)})\big]\mathbb{E}\big[F(v_{k}^{(j,N)}-\lambda_{j}^{k}v_{0})\big],
\end{align*}
where we have used the notation $F(v_{n}^{(j,N)})=F(v_{n}^{(1,N)},v_{n}^{(2,N)})$.
It is routinary to verify that $\mathbb{E}\big[F(v_{k}^{(j,N)})\big]-\mathbb{E}\big[F(v_{k}^{(j,N)}-\lambda_{j}^{k}v_{0})\big]$
has exponential decay in $k$. Combining the above gives the desired
exponential decay of covariance. Thus, all key ingredients can be
established.
\end{rem}

\subsection{\label{subsec:-4}Proof of Proposition \ref{prop:-1}}

We now turn to the proof of the vega conversion formula in Proposition
\ref{prop:-1}.
\begin{proof}[Proof of Proposition \ref{prop:-1}]
Let $J_{T}^{(N)}=\partial_{\sigma}\log(I_{T}^{(N)})$ be the path
wise derivative of $\log(I_{T}^{(N)})$ with respect to $\sigma$.
Then
\[
\partial_{\sigma}\psi_{S,\Delta t}(r,\rho,\sigma)=\mathbb{E}\Big[e^{-\int_{0}^{T}r^{\text{disc}}(t)dt}G^{\prime}(I_{T}^{(N)})I_{T}^{(N)}J_{T}^{(N)}\Big].
\]
Similarly, let $L_{T}^{(N)}=\partial_{\sigma}\log(X_{T}^{(N)})$ be
the path wise derivative of $\log(X_{T}^{(N)})$ with respect to $\sigma$.
By an argument similar to the proof of Proposition \ref{prop:-4},
it can be shown that $\lim_{N\to\infty}(J_{T}^{(N)}-L_{T}^{(N)})=0$
in distribution. Therefore, by the boundedness of $G^{\prime}$ and
the dominated convergence theorem,
\begin{equation}
\lim_{\Delta t\to0}\partial_{\sigma}\psi_{S,\Delta t}(r,\rho,\sigma)=\lim_{N\to\infty}\mathbb{E}\Big[e^{-\int_{0}^{T}r^{\text{disc}}(t)dt}G^{\prime}(X_{T}^{(N)})X_{T}^{(N)}L_{T}^{(N)}\Big],\label{eq:-24}
\end{equation}
provided that the limit on the right hand side exists. We now turn
to the limiting distribution of $L_{T}^{(N)}$. Let $\beta_{T}^{(N)}$
be the path wise derivative of $\sigma^{-2}u_{n}^{(N)}$ with respect
to $\sigma$, that is $\beta_{n}^{(N)}=-2\lambda^{n}\sigma^{-3}v_{0}$.
Then
\[
\begin{aligned}L_{T}^{(N)} & =\frac{(r-\rho)T}{N}\sum_{k=1}^{N}\dfrac{\bar{\sigma}\sigma^{-2}}{\sqrt{\sigma^{-2}u_{k-1}^{(N)}}}+\frac{(r-\rho)T}{2N}\sum_{k=1}^{N}\dfrac{\bar{\sigma}\sigma^{-1}\beta_{k-1}^{(N)}}{\big(\sigma^{-2}u_{k-1}^{(N)}\big)^{3/2}}\\
 & \quad+\frac{T}{2N}\sum_{k=1}^{N}\dfrac{\bar{\sigma}^{2}\beta_{k-1}^{(N)}}{\big(\sigma^{-2}u_{k-1}^{(N)}\big)^{2}}-\sqrt{\frac{T}{4N}}\sum_{k=1}^{N}\dfrac{\bar{\sigma}\beta_{k-1}^{(N)}}{\big(\sigma^{-2}u_{k-1}^{(N)}\big)^{3/2}}Z_{k}\\
 & =(r-\rho)T\bar{\sigma}\sigma^{-2}H_{1}+\frac{(r-\rho)T\bar{\sigma}\sigma^{-1}}{2}H_{2}+\frac{T\bar{\sigma}^{2}}{2}H_{3}-\sqrt{\frac{T\bar{\sigma}^{2}}{4}}H_{4}.
\end{aligned}
\]

For $H_{1}$, by (\ref{eq:-4}), 
\[
\lim_{N\to\infty}H_{1}=\frac{1}{N}\sum_{k=1}^{N}\dfrac{1}{\sqrt{\sigma^{-2}u_{k-1}^{(N)}}}=U(\lambda)\quad\text{a.s.}
\]

For $H_{2}$ and $H_{3}$, by applying (\ref{eq:-79}) to $h(x)=x^{-3/2}$,
we see that $A_{1}=\sup_{N\ge1}\sup_{n<N}\mathbb{E}\big[\big(\sigma^{-2}u_{n}^{(N)}\big)^{-3/2}\big]<\infty$.
Therefore,
\[
\begin{aligned}\mathbb{E}(|H_{2}|) & \le\frac{(r-\rho)Tv_{0}\bar{\sigma}\sigma^{-4}}{N}\sum_{k=1}^{N}\lambda^{k-1}\mathbb{E}\big[\big(\sigma^{-2}u_{k-1}^{(N)}\big)^{3/2}\big]\le\frac{c_{r,\rho,T,\sigma,\bar{\sigma},\lambda}A_{1}}{N}\to0.\end{aligned}
\]
Therefore, $H_{2}\to0$ in distribution as $N\to\infty$. By a similar
argument, it is easy to show that $H_{3}\to0$ in distribution as
$N\to\infty$.

For $H_{4}$, let
\[
M_{n}=-\frac{2\sigma^{-3}v_{0}}{\sqrt{N}}\sum_{k=1}^{n}\dfrac{\lambda^{k-1}}{\big(\sigma^{-2}u_{k-1}^{(N)}\big)^{3/2}}Z_{k}.
\]
Then $\{M_{n}\}_{n\ge1}$ is a martingale. Therefore,
\begin{equation}
\begin{aligned}\mathbb{E}(M_{N}^{2}) & =\frac{4\sigma^{-6}v_{0}^{2}}{N}\sum_{k=1}^{N}\mathbb{E}\Big[\dfrac{\lambda^{2k-2}}{\big(\sigma^{-2}u_{k-1}^{(N)}\big)^{3}}Z_{k}^{2}\Big]=\frac{4\sigma^{-6}v_{0}^{2}}{N}\sum_{k=1}^{N}\lambda^{2k-2}\mathbb{E}\big[\big(\sigma^{-2}u_{k-1}^{(N)}\big)^{-3}\big].\end{aligned}
\label{eq:-25}
\end{equation}
Applying (\ref{eq:-79}) to $h(x)=x^{-3}$ shows that $A_{2}=\sup_{N\ge1}\sup_{n<N}\mathbb{E}\big[\big(\sigma^{-2}u_{n}^{(N)}\big)^{-3}\big]<\infty$,
which, together with (\ref{eq:-25}) implies that $H_{4}=M_{N}\to0$
in distribution as $N\to\infty$.

Therefore, we have shown that $L_{T}^{(N)}\to(r-\rho)T\bar{\sigma}\sigma^{-2}U(\lambda)$
in distribution. Moreover, by (\ref{eq:-24}),
\begin{equation}
\lim_{\Delta t\to0}\partial_{\sigma}\psi_{S,\Delta t}(r,q,\sigma)=(r-\rho)T\bar{\sigma}\sigma^{-2}U(\lambda)\mathbb{E}\Big[e^{-\int_{0}^{T}r^{\text{disc}}(t)dt}G^{\prime}(X_{T})X_{T}\Big].\label{eq:-27}
\end{equation}
On the other hand, by
\[
d\log(Y_{t}^{\mu,\nu})=(\mu(t)-\nu(t)^{2}/2)dt+\nu(t)dW_{t},
\]
the path wise derivative of $\log(Y_{T}^{\mu,\nu})$ with respect
to $\mu$ is $\partial_{\mu}\log(Y_{T}^{\mu,\nu})=T$. Therefore,
\[
\partial_{\mu}\psi_{Y}(\mu,\nu)=T\mathbb{E}\Big[e^{-\int_{0}^{T}r^{\text{disc}}(t)dt}G^{\prime}(Y_{T}^{\mu,\nu})Y_{T}^{\mu,\nu}\Big].
\]
Since $Y_{t}^{r+(\rho-r)\sigma^{-1}\bar{\sigma}U(\lambda),\bar{\sigma}V(\lambda)^{1/2}}=X_{t}$,
we deduce that
\[
\lim_{\Delta t\to0}\partial_{\sigma}\psi_{S,\Delta t}(r,q,\sigma)=(r-\rho)\bar{\sigma}\sigma^{-2}U(\lambda)\partial_{\mu}\psi_{Y}\big(r+(\rho-r)\sigma^{-1}\bar{\sigma}U(\lambda),\bar{\sigma}V(\lambda)^{1/2}\big).
\]
\end{proof}

\subsection{\label{subsec:-2}Proof of Theorem \ref{thm:-3}: the general case}

In this section, we consider the general case when $r(t)$, $\rho(t)$
and $\sigma(t)$ are deterministic functions of $t$. Let $L^{2}(\mathbb{P};L^{\infty}(0,T))$
be space of stochastic processes $\{Q_{t}\}_{t\ge0}$ such that
\[
\Vert Q\Vert_{L^{2}(\mathbb{P};L^{\infty}(0,T))}=\Big[\mathbb{E}\Big(\sup_{t\in[0,T]}|Q_{t}|\Big)^{2}\Big]^{1/2}<\infty.
\]
Let $L_{+}^{2}(0,T)$ be the convex subset of $L^{2}(0,T)$ consisting
of functions $\sigma\in L^{2}(0,T)$ such that $\inf_{t\in[0,T]}\sigma(t)>0$.
For any integer $N\ge1$ and any $(r,\rho,\sigma)\in L^{1}(0,T)\times L^{1}(0,T)\times L_{+}^{2}(0,T)$.
Let $\mathfrak{I}^{(N)}$ be the mapping which maps $(r,\rho,\sigma)$
to the process (\ref{eq:-125}) , that is,
\begin{equation}
\begin{aligned}\frac{d\mathfrak{I}^{(N)}(r,\rho,\sigma)_{t}}{\mathfrak{I}^{(N)}(r,\rho,\sigma)_{t}} & =\Big(1-\dfrac{\bar{\sigma}}{\sqrt{u_{n-1}^{(N)}}}\Big)r(t)dt+\dfrac{\bar{\sigma}}{\sqrt{u_{n-1}^{(N)}}}\frac{dS_{t}}{S_{t}},\quad t\in[t_{n-1},t_{n}),\\
\mathfrak{I}^{(N)}(r,\rho,\sigma)_{0} & =I_{0}.
\end{aligned}
\label{eq:-68}
\end{equation}
Similarly, define $\mathfrak{I}$ to be the mapping which maps $(r,\rho,\sigma)$
to the process (\ref{eq:-39}), that is,
\begin{equation}
d\mathfrak{I}(r,\rho,\sigma)_{t}=\big[r(t)+(\rho(t)-r(t))\sigma(t)^{-1}\bar{\sigma}U(\lambda)\big]\mathfrak{J}(r,\rho,\sigma)_{t}dt+\bar{\sigma}V(\lambda)^{1/2}\mathfrak{J}(r,\rho,\sigma)_{t}dW_{t}.\label{eq:-77}
\end{equation}
 By Proposition \ref{prop:-4}, $I_{t}^{(N)}\to X_{t}=\mathfrak{I}(r,\rho,\sigma)_{t}$
in law if and only if $\mathfrak{I}^{(N)}(r,\rho,\sigma)_{t}\to\mathfrak{I}(r,\rho,\sigma)_{t}$
in law. We now turn to prove the general case of part (i) of Theorem
\ref{thm:-3}.
\begin{proof}[Proof of Theorem \ref{thm:-3}, (i)]
Let $L^{1}(0,T)\times L^{1}(0,T)\times L_{+}^{2}(0,T)$ be the metric
space induced by the metric $\text{dist}((r,\rho,\sigma),(\tilde{r},\tilde{\rho},\tilde{\sigma}))=\Vert r-\tilde{r}\Vert_{L^{1}(0,T)}+\Vert\rho-\tilde{\rho}\Vert_{L^{1}(0,T)}+\Vert\sigma-\tilde{\sigma}\Vert_{L^{2}(0,T)}$,
and let
\[
\mathfrak{I}^{(N)}:L^{1}(0,T)\times L^{1}(0,T)\times L_{+}^{2}(0,T)\to L^{2}(\mathbb{P};L^{\infty}(0,T)),
\]
and 
\[
\mathfrak{I}:L^{1}(0,T)\times L^{1}(0,T)\times L_{+}^{2}(0,T)\to L^{2}(\mathbb{P};L^{\infty}(0,T)),
\]
be the mappings defined by (\ref{eq:-68}) and (\ref{eq:-77}) respectively.
We need to prove that $\lim_{N\to\infty}\mathfrak{I}^{(N)}(r,\rho,\sigma)=\mathfrak{I}(r,\rho,\sigma)$
in law. 

Let $\{\mathcal{F}_{t}\}_{t\ge0}$ be the filtration generated by
$\{W_{t}\}_{t\ge0}$. For any simple functions $r(t)=\sum_{j=1}^{m}r_{j}\chi_{[s_{j},s_{j+1})}(t)$,
$\rho(t)=\sum_{j=1}^{m}\rho_{j}\chi_{[s_{j},s_{j+1})}(t)$, and $\sigma(t)=\sum_{j=1}^{m}\sigma_{j}\chi_{[s_{j},s_{j+1})}(t)$,
with $\sigma_{j}>0$. Denote $\mathfrak{I}=\mathfrak{I}(r,\rho,\sigma)$
and $\mathfrak{J}^{(N)}=\mathfrak{J}^{(N)}(r,\rho,\sigma)$. Since
$r(t)$, $\rho(t)$, $\sigma(t)$ are constants on $[s_{j},s_{j+1})$,
by Proposition \ref{prop:-2}, conditional on $\mathcal{F}_{s_{j}}$,
\[
\log\frac{\mathfrak{I}_{t}^{(N)}}{\mathfrak{I}_{s_{j}}^{(N)}}\to\log\frac{\mathfrak{I}_{t}}{\mathfrak{I}_{s_{j}}},\quad t\in[s_{j},s_{j+1}),
\]
in distribution as $N\to\infty$, and the limiting distribution is
independent of $\mathcal{F}_{s_{j}}$. Therefore, with $\tau(t)=\sup\{j:s_{j}\le t\}$
\begin{align*}
\lim_{N\to\infty}\log\frac{\mathfrak{I}_{t}^{(N)}}{\mathfrak{I}_{s}^{(N)}} & =\sum_{j=\tau(s)+1}^{\tau(t)}\lim_{N\to\infty}\log\frac{\mathfrak{I}_{s_{j}}^{(N)}}{\mathfrak{I}_{s_{j-1}}^{(N)}}+\lim_{N\to\infty}\log\frac{\mathfrak{I}_{t}^{(N)}}{\mathfrak{I}_{s_{\tau(t)}}^{(N)}}\\
 & =\sum_{j=\tau(s)+1}^{\tau(t)}\log\frac{\mathfrak{I}_{s_{j}}}{\mathfrak{I}_{s_{j-1}}}+\log\frac{\mathfrak{I}_{t}}{\mathfrak{I}_{s_{\tau(t)}}}\\
 & =\log\frac{\mathfrak{I}_{t}}{\mathfrak{I}_{s}},
\end{align*}
in distribution. Therefore, $\lim_{N\to\infty}\mathfrak{I}^{(N)}(r,\rho,\sigma)=\mathfrak{I}(r,\rho,\sigma)$
in law for simple functions $r$, $\rho$, $\sigma$.

We will show that, for any $0<\delta<M<\infty$, $\log\mathfrak{I}^{(N)}(r,\rho,\sigma)$
is uniformly continuous in $(r,\rho,\sigma)$ on the following closed
subset of $L^{1}(0,T)\times L^{1}(0,T)\times L_{+}^{2}(0,T)$,
\begin{equation}
A_{\delta,M}=\Big\{(r,\rho,\sigma):\sup_{t\in[0,T]}(|r(t)|+|\rho(t)|)\le M,\delta\le\inf_{t\in[0,T]}\sigma(t)\le\sup_{t\in[0,T]}\sigma(t)\le M\Big\}.\label{eq:-92}
\end{equation}
Assume the uniform continuity of $\log\mathfrak{I}^{(N)}$ on $A_{\delta,M}$
for the moment. By Itô's formula, $\log\mathfrak{J}(r,\rho,\sigma)$
is continuous in $(r,\rho,\sigma)$ on $A_{\delta,M}$. For any $\epsilon>0$,
since simple functions are dense in $L^{1}(0,T)$ and $L_{+}^{2}(0,T)$,
we may choose simple functions $\tilde{r}$, $\tilde{\rho}$ and $\tilde{\sigma}$
such that $(\tilde{r},\tilde{\rho},\tilde{\sigma})\in A_{\delta,M}$
and $\sup_{N\ge1}\Vert\log\mathfrak{I}^{(N)}-\log\tilde{\mathfrak{I}}^{(N)}\Vert_{L^{2}(\mathbb{P};L^{\infty}(0,T))}\le\epsilon$
and $\Vert\log\mathfrak{I}-\log\tilde{\mathfrak{I}}\Vert_{L^{2}(\mathbb{P};L^{\infty}(0,T))}\le\epsilon$,
where, to simplify notation, we denote $\mathfrak{I}=\mathfrak{I}(r,\rho,\sigma)$,
$\tilde{\mathfrak{I}}=\mathfrak{I}(\tilde{r},\tilde{\rho},\tilde{\sigma})$,
and similarly for $\mathfrak{J}^{(N)}$ and $\tilde{\mathfrak{J}}^{(N)}$.
For any smooth function $f$ with bounded derivative, by
\begin{align}
\Big|\mathbb{E}\Big[f\Big(\log\frac{\mathfrak{I}_{t}^{(N)}}{\mathfrak{I}_{s}^{(N)}}\Big)\Big]-\mathbb{E}\Big[f\Big(\log\frac{\mathfrak{I}_{t}}{\mathfrak{I}_{s}}\Big)\Big]\Big| & \le\mathbb{E}\Big[\Big|f\Big(\log\frac{\mathfrak{I}_{t}^{(N)}}{\mathfrak{I}_{s}^{(N)}}\Big)-f\Big(\log\frac{\tilde{\mathfrak{I}}_{t}^{(N)}}{\tilde{\mathfrak{I}}_{s}^{(N)}}\Big)\Big|\Big]\nonumber \\
 & \quad+\mathbb{E}\Big[\Big|f\Big(\log\frac{\mathfrak{I}_{t}}{\mathfrak{I}_{s}}\Big)-f\Big(\log\frac{\tilde{\mathfrak{I}}_{t}}{\tilde{\mathfrak{I}}_{s}}\Big)\Big|\Big]+\Big|\mathbb{E}\Big[f\Big(\log\frac{\tilde{\mathfrak{I}}_{t}^{(N)}}{\tilde{\mathfrak{I}}_{s}^{(N)}}\Big)-f\Big(\log\frac{\tilde{\mathfrak{I}}_{t}}{\tilde{\mathfrak{I}}_{s}}\Big)\Big]\Big|,\label{eq:-112}
\end{align}
and
\[
\Big|f\Big(\log\frac{\mathfrak{I}_{t}^{(N)}}{\mathfrak{I}_{s}^{(N)}}\Big)-f\Big(\log\frac{\tilde{\mathfrak{I}}_{t}^{(N)}}{\tilde{\mathfrak{I}}_{s}^{(N)}}\Big)\Big|\le2\Vert f^{\prime}\Vert_{L^{\infty}}\Vert\log\mathfrak{I}^{(N)}-\log\tilde{\mathfrak{I}}^{(N)}\Vert_{L^{\infty}(0,T)},
\]
\[
\Big|f\Big(\log\frac{\mathfrak{I}_{t}}{\mathfrak{I}_{s}}\Big)-f\Big(\log\frac{\tilde{\mathfrak{I}}_{t}}{\tilde{\mathfrak{I}}_{s}}\Big)\Big|\le2\Vert f^{\prime}\Vert_{L^{\infty}}\Vert\log\mathfrak{I}-\log\tilde{\mathfrak{I}}\Vert_{L^{\infty}(0,T)},
\]
and that $\lim_{N\to\infty}\tilde{\mathfrak{I}}^{(N)}=\tilde{\mathfrak{I}}$
in distribution, we deduce that
\[
\limsup_{N\to\infty}\Big|\mathbb{E}\Big[f\Big(\log\frac{\mathfrak{I}_{t}^{(N)}}{\mathfrak{I}_{s}^{(N)}}\Big)\Big]-\mathbb{E}\Big[f\Big(\log\frac{\mathfrak{I}_{t}}{\mathfrak{I}_{s}}\Big)\Big]\Big|\le4\Vert f^{\prime}\Vert_{L^{\infty}}\epsilon.
\]
This implies that 
\[
\lim_{N\to\infty}\log\frac{\mathfrak{I}_{t}^{(N)}}{\mathfrak{I}_{s}^{(N)}}=\log\frac{\mathfrak{I}_{t}}{\mathfrak{I}_{s}},
\]
in distribution, and the limiting distribution is independent of $\mathcal{F}_{s}$.
This implies that $\lim_{N\to\infty}\mathfrak{I}^{(N)}(r,\rho,\sigma)=\mathfrak{I}(r,\rho,\sigma)$
in law.

It suffices to prove the uniform continuity of $\log\mathfrak{I}^{(N)}(r,\rho,\sigma)$
in $(r,\rho,\sigma)$ on $A_{\delta,M}$. Suppose that $(r,\rho,\sigma),(\tilde{r},\tilde{\rho},\tilde{\sigma})\in A_{\delta,M}$.
Let $q=r-\rho$ and $\tilde{q}=\tilde{r}-\tilde{\rho}$. Denote $\tilde{u}_{n}^{(N)}$
be random variables defined in (\ref{eq:-14}) with $(r,\rho,\sigma)$
replaced by $(\tilde{r},\tilde{\rho},\tilde{\sigma})$. Clearly,
\begin{align*}
d[\log\mathfrak{J}^{(N)}(r,\rho,\sigma)-\log\mathfrak{J}^{(N)}(\tilde{r},\tilde{\rho},\tilde{\sigma})]_{t} & =[r(t)-\tilde{r}(t)]dt-\Big(\dfrac{\bar{\sigma}}{\sqrt{u_{n-1}^{(N)}}}q(t)-\dfrac{\bar{\sigma}}{\sqrt{\tilde{u}_{n-1}^{(N)}}}\tilde{q}(t)\Big)dt\\
 & \quad-\frac{1}{2}\Big(\dfrac{\bar{\sigma}^{2}\sigma(t)^{2}}{u_{n-1}^{(N)}}-\dfrac{\bar{\sigma}^{2}\tilde{\sigma}(t)^{2}}{\tilde{u}_{n-1}^{(N)}}\Big)dt+\Big(\dfrac{\bar{\sigma}\sigma(t)}{\sqrt{u_{n-1}^{(N)}}}-\dfrac{\bar{\sigma}\tilde{\sigma}(t)}{\sqrt{\tilde{u}_{n-1}^{(N)}}}\Big)dW_{t}
\end{align*}
for $t\in[t_{n-1},t_{n})$. Therefore, 
\begin{equation}
\begin{aligned} & \mathbb{E}\big(\sup_{t\in[0,T]}|\log\mathfrak{I}^{(N)}(r,\rho,\sigma)_{t}-\log\mathfrak{I}^{(N)}(\tilde{r},\tilde{\rho},\tilde{\sigma})_{t}|^{2}\big)\\
 & \le4\Big(\int_{0}^{T}|r(t)-\tilde{r}(t)|dt\Big)^{2}+4\mathbb{E}\Big[\Big(\sum_{n=1}^{N}\Big|\dfrac{\bar{\sigma}}{\sqrt{u_{n-1}^{(N)}}}\int_{t_{n-1}}^{t_{n}}q(t)dt-\dfrac{\bar{\sigma}}{\sqrt{\tilde{u}_{n-1}^{(N)}}}\int_{t_{n-1}}^{t_{n}}\tilde{q}(t)dt\Big|\Big)^{2}\Big]\\
 & +\mathbb{E}\Big[\Big(\sum_{n=1}^{N}\Big|\dfrac{\bar{\sigma}^{2}}{u_{n-1}^{(N)}}\int_{t_{n-1}}^{t_{n}}\sigma(t)^{2}dt-\dfrac{\bar{\sigma}^{2}}{\tilde{u}_{n-1}^{(N)}}\int_{t_{n-1}}^{t_{n}}\tilde{\sigma}(t)^{2}dt\Big|\Big)^{2}\Big]\\
 & \quad+4\mathbb{E}\Big[\Big(\sum_{n=1}^{N}\Big(\dfrac{\bar{\sigma}}{\sqrt{u_{n-1}^{(N)}}}\int_{t_{n-1}}^{t_{n}}\sigma(t)dW_{t}-\dfrac{\bar{\sigma}}{\sqrt{\tilde{u}_{n-1}^{(N)}}}\int_{t_{n-1}}^{t_{n}}\tilde{\sigma}(t)dW_{t}\Big)\Big)^{2}\Big]\\
 & =4\Vert r-\tilde{r}\Vert_{L^{1}(0,T)}^{2}+4E_{1}+E_{2}+4E_{3}.
\end{aligned}
\label{eq:-84}
\end{equation}

To estimate the expectations $E_{1}$, $E_{2}$, $E_{3}$, we first
derive an $L^{p}$ bound of $|h(u_{n-1}^{(N)})-h(\tilde{u}_{n-1}^{(N)})|$
for any $h(x)=x^{-\alpha}$ with $p\ge1$ and $\alpha>0$. By (\ref{eq:-14})
and Hölder's inequality,
\[
\begin{aligned}\mathbb{E}\big(\big|u_{n}^{(N)}-\tilde{u}_{n}^{(N)}\big|^{2p}\big) & \le\mathbb{E}\Big[\Big(\frac{1-\lambda}{\Delta t}\sum_{k=1}^{n}\lambda^{n-k}\Big|\Big(\int_{t_{k-1}}^{t_{k}}\sigma(t)dW_{t}\Big)^{2}-\Big(\int_{t_{k-1}}^{t_{k}}\tilde{\sigma}(t)dW_{t}\Big)^{2}\Big|\Big)^{2p}\Big]\\
 & \le\mathbb{E}\Big[\frac{1-\lambda}{(\Delta t)^{2p}}\sum_{k=1}^{n}\lambda^{n-k}\Big|\Big(\int_{t_{k-1}}^{t_{k}}\sigma(t)dW_{t}\Big)^{2}-\Big(\int_{t_{k-1}}^{t_{k}}\tilde{\sigma}(t)dW_{t}\Big)^{2}\Big|^{2p}\Big]\\
 & \le\frac{1-\lambda}{(\Delta t)^{2p}}\sum_{k=1}^{n}\lambda^{n-k}\mathbb{E}\Big(\Big|\int_{t_{k-1}}^{t_{k}}[\sigma(t)-\tilde{\sigma}(t)]dW_{t}\Big|^{2p}\Big|\int_{t_{k-1}}^{t_{k}}[\sigma(t)+\tilde{\sigma}(t)]dW_{t}\Big|^{2p}\Big)\\
 & \le\frac{1-\lambda}{(\Delta t)^{2p}}\sum_{k=1}^{n}\lambda^{n-k}\mathbb{E}\Big(\Big|\int_{t_{k-1}}^{t_{k}}[\sigma(t)-\tilde{\sigma}(t)]dW_{t}\Big|^{4p}\Big)^{1/2}\mathbb{E}\Big(\Big|\int_{t_{k-1}}^{t_{k}}[\sigma(t)+\tilde{\sigma}(t)]dW_{t}\Big|^{4p}\Big)^{1/2}.
\end{aligned}
\]
By the Burkholder--Davis--Gundy inequality,
\[
\begin{aligned}\mathbb{E}\big(\big|u_{n}^{(N)}-\tilde{u}_{n}^{(N)}\big|^{2p}\big) & \le c_{p}\frac{1-\lambda}{(\Delta t)^{2p}}\sum_{k=1}^{n}\lambda^{n-k}\mathbb{E}\Big(\Big|\int_{t_{k-1}}^{t_{k}}[\sigma(t)-\tilde{\sigma}(t)]^{2}dt\Big|^{2p}\Big)^{1/2}\mathbb{E}\Big(\Big|\int_{t_{k-1}}^{t_{k}}[\sigma(t)+\tilde{\sigma}(t)]^{2}dt\Big|^{2p}\Big)^{1/2}\\
 & \le c_{p,M}\frac{1-\lambda}{(\Delta t)^{p}}\sum_{k=1}^{n}\lambda^{n-k}\Big(\int_{t_{k-1}}^{t_{k}}[\sigma(t)-\tilde{\sigma}(t)]^{2}dt\Big)^{p}
\end{aligned}
\]
Therefore,
\[
\mathbb{E}\big(\big|u_{n}^{(N)}-\tilde{u}_{n}^{(N)}\big|^{2p}\big)^{1/p}\le c_{p,M}\frac{(1-\lambda)^{1/p}}{\Delta t}\sum_{k=1}^{n}\lambda^{(n-k)/p}\int_{t_{k-1}}^{t_{k}}[\sigma(t)-\tilde{\sigma}(t)]^{2}dt,
\]
and hence,
\begin{equation}
\begin{aligned}\sum_{n=1}^{N}\mathbb{E}\big(\big|u_{n}^{(N)}-\tilde{u}_{n}^{(N)}\big|^{2p}\big)^{1/p} & \le c_{p,M}\frac{(1-\lambda)^{1/p}}{\Delta t}\sum_{n=1}^{N}\sum_{k=1}^{n}\lambda^{(n-k)/p}\int_{t_{k-1}}^{t_{k}}[\sigma(t)-\tilde{\sigma}(t)]^{2}dt,\\
 & =c_{p,M}\frac{(1-\lambda)^{1/p}}{\Delta t}\sum_{k=1}^{N}\sum_{n=k}^{N}\lambda^{(n-k)/p}\int_{t_{k-1}}^{t_{k}}[\sigma(t)-\tilde{\sigma}(t)]^{2}dt\\
 & =c_{p,M}\frac{(1-\lambda)^{1/p}}{(1-\lambda^{1/p})\Delta t}\sum_{k=1}^{N}\int_{t_{k-1}}^{t_{k}}[\sigma(t)-\tilde{\sigma}(t)]^{2}dt\\
 & \le c_{p,\lambda,M}(\Delta t)^{-1}\Vert\sigma-\tilde{\sigma}\Vert_{L^{2}(0,T)}^{2}.
\end{aligned}
\label{eq:-26}
\end{equation}
By the monotonicity of $h^{\prime}(x)$, 
\[
\begin{aligned}\mathbb{E}\big(\big||h(u_{n-1}^{(N)})-h(\tilde{u}_{n-1}^{(N)})|\big|^{p}\big) & \le\mathbb{E}\big[\big(|h^{\prime}(u_{n-1}^{(N)})|^{p}+|h^{\prime}(\tilde{u}_{n-1}^{(N)})|^{p}\big)|u_{n}^{(N)}-\tilde{u}_{n}^{(N)}|^{p}\big]\\
 & \le c_{p}\big[\mathbb{E}\big(|h^{\prime}(u_{n-1}^{(N)})|^{2p}\big)^{1/2}+\mathbb{E}\big(|h^{\prime}(\tilde{u}_{n-1}^{(N)})|^{2p}\big)^{1/2}\big]\mathbb{E}\big(|u_{n}^{(N)}-\tilde{u}_{n}^{(N)}|^{2p}\big)^{1/2}.
\end{aligned}
\]
By applying (\ref{eq:-79}) to $h^{\prime}$, we deduce that 
\[
\sup_{N\ge}\sup_{n<N}\mathbb{E}\big(|h^{\prime}(u_{n-1}^{(N)})|^{2p}\big)<c_{\alpha,p,\delta,M}<\infty.
\]
Therefore, by (\ref{eq:-26}),
\begin{equation}
\begin{aligned}\sum_{n=1}^{N}\mathbb{E}\big(\big||h(u_{n-1}^{(N)})-h(\tilde{u}_{n-1}^{(N)})|\big|^{p}\big)^{2/p} & \le c_{\alpha,p,\delta,M}\sum_{n=1}^{N}\mathbb{E}\big(|u_{n}^{(N)}-\tilde{u}_{n}^{(N)}|^{2p}\big)^{1/p}\le c_{\alpha,p,\lambda,\delta,M}(\Delta t)^{-1}\Vert\sigma-\tilde{\sigma}\Vert_{L^{2}(0,T)}^{2}.\end{aligned}
\label{eq:-28}
\end{equation}

We now revert to the estimates of $E_{1}$, $E_{2}$, and $E_{3}$.
For $E_{1}$, by the Cauchy--Schwartz inequality,
\begin{equation}
\begin{aligned}E_{1} & \le\mathbb{E}\Big[\Big(\sum_{n=1}^{N}\Big|\dfrac{\bar{\sigma}}{\sqrt{u_{n-1}^{(N)}}}-\dfrac{\bar{\sigma}}{\sqrt{\tilde{u}_{n-1}^{(N)}}}\Big|\int_{t_{n-1}}^{t_{n}}|\tilde{q}(t)|dt\Big)^{2}\Big]+\mathbb{E}\Big[\Big(\sum_{n=1}^{N}\dfrac{\bar{\sigma}}{\sqrt{u_{n-1}^{(N)}}}\int_{t_{n-1}}^{t_{n}}|q(t)-\tilde{q}(t)|dt\Big)^{2}\Big]\\
 & \le\mathbb{E}\Big[\Big(\sum_{n=1}^{N}\Big(\dfrac{\bar{\sigma}}{\sqrt{u_{n-1}^{(N)}}}-\dfrac{\bar{\sigma}}{\sqrt{\tilde{u}_{n-1}^{(N)}}}\Big)^{2}\int_{t_{n-1}}^{t_{n}}|\tilde{q}(t)|dt\Big)\Big(\sum_{n=1}^{N}\int_{t_{n-1}}^{t_{n}}|\tilde{q}(t)|dt\Big)\Big]\\
 & \quad+\mathbb{E}\Big[\Big(\sum_{n=1}^{N}\dfrac{\bar{\sigma}^{2}}{u_{n-1}^{(N)}}\int_{t_{n-1}}^{t_{n}}|q(t)-\tilde{q}(t)|dt\Big)\Big(\sum_{n=1}^{N}\int_{t_{n-1}}^{t_{n}}|q(t)-\tilde{q}(t)|dt\Big)\Big]\\
 & \le c_{M,T}\Delta t\sum_{n=1}^{N}\mathbb{E}\Big[\Big(\dfrac{\bar{\sigma}}{\sqrt{u_{n-1}^{(N)}}}-\dfrac{\bar{\sigma}}{\sqrt{\tilde{u}_{n-1}^{(N)}}}\Big)^{2}\Big]+\Vert q-\tilde{q}\Vert_{L^{1}(0,T)}\sum_{n=1}^{N}\mathbb{E}\Big(\dfrac{\bar{\sigma}^{2}}{u_{n-1}^{(N)}}\Big)\int_{t_{n-1}}^{t_{n}}|q(t)-\tilde{q}(t)|dt.
\end{aligned}
\label{eq:-94}
\end{equation}
By applying (\ref{eq:-79}) to $h(x)=x^{-1}$ and $h(x)=x^{-2}$,
we have
\begin{equation}
\sup_{N\ge1}\sup_{n<N}\mathbb{E}\Big[\dfrac{\bar{\sigma}^{2}}{u_{n-1}^{(N)}}\Big]<c_{\delta,M}<\infty,\label{eq:-93}
\end{equation}
and
\begin{equation}
\sup_{N\ge1}\sup_{n<N}\mathbb{E}\Big[\Big(\dfrac{\bar{\sigma}^{2}}{u_{n-1}^{(N)}}\Big)^{2}\Big]<c_{\delta,M}<\infty.\label{eq:-109}
\end{equation}
Moreover, applying (\ref{eq:-28}) to $h(x)=x^{-1/2}$ and $p=2$
gives
\begin{equation}
\sum_{n=1}^{N}\mathbb{E}\Big[\Big(\dfrac{\bar{\sigma}}{\sqrt{u_{n-1}^{(N)}}}-\dfrac{\bar{\sigma}}{\sqrt{\tilde{u}_{n-1}^{(N)}}}\Big)^{2}\Big]\le c_{\lambda,\delta,M}(\Delta t)^{-1}\Vert\sigma-\tilde{\sigma}\Vert_{L^{2}(0,T)}^{2}.\label{eq:-82}
\end{equation}
By (\ref{eq:-94}), (\ref{eq:-93}), and (\ref{eq:-82}), we deduce
that
\begin{equation}
E_{1}\le c_{\lambda,\delta,M,T}\big(\Vert q-\tilde{q}\Vert_{L^{1}(0,T)}^{2}+\Vert\sigma-\tilde{\sigma}\Vert_{L^{2}(0,T)}^{2}\big).\label{eq:-95}
\end{equation}

For $E_{2}$, similar to (\ref{eq:-94}), we have
\begin{equation}
\begin{aligned}E_{2} & \le c_{M,T}\Delta t\sum_{n=1}^{N}\mathbb{E}\Big[\Big(\dfrac{\bar{\sigma}^{2}}{u_{n-1}^{(N)}}-\dfrac{\bar{\sigma}^{2}}{\tilde{u}_{n-1}^{(N)}}\Big)^{2}\Big]\\
 & \quad+c_{M,T}\Vert\sigma-\tilde{\sigma}\Vert_{L^{2}(0,T)}\sum_{n=1}^{N}\mathbb{E}\Big[\Big(\dfrac{\bar{\sigma}^{2}}{u_{n-1}^{(N)}}\Big)^{2}\Big]\Big(\int_{t_{n-1}}^{t_{n}}|\sigma(t)-\tilde{\sigma}(t)|dt\Big)
\end{aligned}
\label{eq:-88}
\end{equation}
By (\ref{eq:-109}),
\begin{equation}
\sum_{n=1}^{N}\mathbb{E}\Big[\Big(\dfrac{\bar{\sigma}^{2}}{u_{n-1}^{(N)}}\Big)^{2}\Big]\Big(\int_{t_{n-1}}^{t_{n}}|\sigma(t)-\tilde{\sigma}(t)|dt\Big)\le c_{\delta,M,T}\Vert\sigma-\tilde{\sigma}\Vert_{L^{2}(0,T)}.\label{eq:-81}
\end{equation}
Combining (\ref{eq:-88}), (\ref{eq:-81}), and (\ref{eq:-82}) gives
that
\begin{equation}
E_{2}\le c_{\lambda,\delta,M,T}\Vert\sigma-\tilde{\sigma}\Vert_{L^{2}(0,T)}^{2}.\label{eq:-91}
\end{equation}

For $E_{3}$, by the martingale property, we have
\begin{align}
E_{3} & \le\sum_{n=1}^{N}\mathbb{E}\Big[\Big(\dfrac{\bar{\sigma}}{\sqrt{u_{n-1}^{(N)}}}-\dfrac{\bar{\sigma}}{\sqrt{\tilde{u}_{n-1}^{(N)}}}\Big)^{2}\Big(\int_{t_{n-1}}^{t_{n}}\tilde{\sigma}(t)dW_{t}\Big)^{2}\Big]+\sum_{n=1}^{N}\mathbb{E}\Big[\dfrac{\bar{\sigma}^{2}}{u_{n-1}^{(N)}}\Big(\int_{t_{n-1}}^{t_{n}}[\sigma(t)-\tilde{\sigma}(t)]dW_{t}\Big)^{2}\Big]\label{eq:-89}\\
 & =\sum_{n=1}^{N}\mathbb{E}\Big[\Big(\dfrac{\bar{\sigma}}{\sqrt{u_{n-1}^{(N)}}}-\dfrac{\bar{\sigma}}{\sqrt{\tilde{u}_{n-1}^{(N)}}}\Big)^{2}\Big]\int_{t_{n-1}}^{t_{n}}\tilde{\sigma}(t)^{2}dt+\sum_{n=1}^{N}\mathbb{E}\Big[\dfrac{\bar{\sigma}^{2}}{u_{n-1}^{(N)}}\Big]\int_{t_{n-1}}^{t_{n}}[\sigma(t)-\tilde{\sigma}(t)]^{2}dt\nonumber \\
 & \le C_{M}\Delta t\sum_{n=1}^{N}\mathbb{E}\Big[\Big(\dfrac{\bar{\sigma}}{\sqrt{u_{n-1}^{(N)}}}-\dfrac{\bar{\sigma}}{\sqrt{\tilde{u}_{n-1}^{(N)}}}\Big)^{2}\Big]+\sum_{n=1}^{N}\mathbb{E}\Big[\dfrac{\bar{\sigma}^{2}}{u_{n-1}^{(N)}}\Big]\Big(\int_{t_{n-1}}^{t_{n}}[\sigma(t)-\tilde{\sigma}(t)]^{2}dt\Big).\nonumber 
\end{align}
By (\ref{eq:-81}) and (\ref{eq:-82}), we obtain that
\begin{equation}
E_{3}\le c_{\lambda,\delta,M,T}\Vert\sigma-\tilde{\sigma}\Vert_{L^{2}(0,T)}^{2}.\label{eq:-83}
\end{equation}

By (\ref{eq:-84}), (\ref{eq:-95}), (\ref{eq:-91}), and (\ref{eq:-83}),
we deduce that
\begin{align}
\Vert\log\mathfrak{I}^{(N)}(r,\rho,\sigma) & -\log\mathfrak{I}^{(N)}(\tilde{r},\tilde{\rho},\tilde{\sigma})\Vert_{L^{2}(\mathbb{P};L^{\infty}(0,T))}\nonumber \\
 & \le c_{\lambda,\delta,M,T}\big(\Vert r-\tilde{r}\Vert_{L^{1}(0,T)}+\Vert q-\tilde{q}\Vert_{L^{1}(0,T)}+\Vert\sigma-\tilde{\sigma}\Vert_{L^{2}(0,T)}\big)\label{eq:-47}\\
 & \le c_{\lambda,\delta,M,T}\big(\Vert r-\tilde{r}\Vert_{L^{1}(0,T)}+\Vert\rho-\tilde{\rho}\Vert_{L^{1}(0,T)}+\Vert\sigma-\tilde{\sigma}\Vert_{L^{2}(0,T)}\big),\nonumber 
\end{align}
where $c_{\lambda,\delta,M,T}>0$ is a constant independent of $N$.
This proves the uniform continuity of $\log\mathfrak{I}^{(N)}(r,\rho,\sigma)$
on $A_{\delta,M}$, which completes the proof.
\end{proof}

\section{\label{sec:}Asymptotic properties of the functions $U(\lambda)$
and $V(\lambda)$}

In this section, we study the asymptotic properties and derive upper
and lower bounds of the functions $U(\lambda)$ and $V(\lambda)$
defined in (\ref{eq:-16}) and (\ref{eq:-3}) respectively. The exponent
$-1/2$ in the terms $(1+t^{2}\lambda^{k})^{-1/2}$ and $(1+t\lambda^{k})^{-1/2}$
makes direct computation of $U(\lambda)$ and $V(\lambda)$ difficult.
We will start with studying the following two closely related integrals
$\int_{0}^{\infty}\prod_{k=0}^{\infty}\big(1+t^{2}\lambda^{k}\big)^{-1}dt$
and $\int_{0}^{\infty}\prod_{k=0}^{\infty}(1+t\lambda^{k})^{-1}dt$,
for which the integral of corresponding partial products can be computed
explicitly. 

Some results regarding the $q$-binomial coefficients and the $q$-gamma
function will be needed. 
\begin{defn}
\label{def:}Let $q\in\mathbb{R},q\not=1$. For any integers $n\ge k\ge0$,
the \emph{$q$-binomial coefficient} is defined by
\[
\left(\begin{array}{c}
n\\
k
\end{array}\right)_{q}=\frac{\prod_{j=1}^{n}(1-q^{j})}{\big[\prod_{j=1}^{k}(1-q^{j})\big]\cdot\big[\prod_{j=1}^{n-k}(1-q^{j})\big]}.
\]
\end{defn}

\begin{rem}
Since $\lim_{q\to1}(1-q^{j})/(1-q)=j$, it is easily seen that the
$q$-binomial coefficient $\left(\begin{array}{c}
n\\
k
\end{array}\right)_{q}$ converges to the ordinary binomial coefficient $\left(\begin{array}{c}
n\\
k
\end{array}\right)=\frac{n!}{k!(n-k)!}$ as $q\to1$.
\end{rem}

Theorem \ref{thm:-2} below is the $q$-analogue of the binomial theorem.
A proof of Theorem \ref{thm:-2} can be found in \cite[Corollary 10.2.2, page 490]{AAR99}.
\begin{thm}[The $q$-Binomial Theorem]
\label{thm:-2}For any $q,t\in\mathbb{R}$, and any integer $n\ge1$,
the following holds
\[
\prod_{k=0}^{n-1}(1+tq^{k})=\sum_{k=0}^{n}\left(\begin{array}{c}
n\\
k
\end{array}\right)_{q}t^{k}q^{k(k-1)/2}.
\]
\end{thm}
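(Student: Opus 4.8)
The plan is to prove this identity by induction on $n$, using the $q$-analogue of Pascal's rule as the engine, exactly as one proves the classical binomial theorem. Throughout I would write $(q)_m=\prod_{j=1}^{m}(1-q^j)$ (with $(q)_0=1$) for brevity, so that $\left(\begin{array}{c}n\\k\end{array}\right)_q=(q)_n/\big[(q)_k\,(q)_{n-k}\big]$, and I would restrict to $q\neq 1$ (the value $q=1$ is the ordinary binomial theorem, recovered by letting $q\to 1$, since each $q$-binomial coefficient tends to the ordinary one). The base case $n=1$ is immediate: both sides equal $1+t$.

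The one genuinely computational ingredient is the $q$-Pascal recursion
\[
\left(\begin{array}{c}n+1\\k\end{array}\right)_q=\left(\begin{array}{c}n\\k\end{array}\right)_q+q^{\,n+1-k}\left(\begin{array}{c}n\\k-1\end{array}\right)_q,\qquad 1\le k\le n,
\]
together with the trivial endpoint identities $\left(\begin{array}{c}n+1\\0\end{array}\right)_q=\left(\begin{array}{c}n\\0\end{array}\right)_q=1$ and $\left(\begin{array}{c}n+1\\n+1\end{array}\right)_q=q^{0}\left(\begin{array}{c}n\\n\end{array}\right)_q=1$. To prove the recursion I would put the two right-hand terms over the common denominator $(q)_k\,(q)_{n-k+1}$; the numerator then becomes $(q)_n\big[(1-q^{\,n-k+1})+q^{\,n+1-k}(1-q^k)\big]=(q)_n(1-q^{\,n+1})=(q)_{n+1}$, which is exactly the numerator of $\left(\begin{array}{c}n+1\\k\end{array}\right)_q$.

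For the inductive step, assuming the formula for $n$, I would multiply through by the extra factor $1+tq^{n}$, so that
\[
\prod_{k=0}^{n}(1+tq^k)=\sum_{k=0}^{n}\left(\begin{array}{c}n\\k\end{array}\right)_q t^{k}q^{k(k-1)/2}+\sum_{k=0}^{n}\left(\begin{array}{c}n\\k\end{array}\right)_q t^{k+1}q^{k(k-1)/2+n},
\]
and then reindex the second sum by $j=k+1$. The only bookkeeping point is the exponent identity $(j-1)(j-2)/2+n=j(j-1)/2+(n+1-j)$, which shows that after reindexing every term of the second sum carries $t^{j}q^{j(j-1)/2}$ with coefficient $q^{\,n+1-j}\left(\begin{array}{c}n\\j-1\end{array}\right)_q$. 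Collecting the coefficient of $t^{j}q^{j(j-1)/2}$ for $1\le j\le n$ gives $\left(\begin{array}{c}n\\j\end{array}\right)_q+q^{\,n+1-j}\left(\begin{array}{c}n\\j-1\end{array}\right)_q=\left(\begin{array}{c}n+1\\j\end{array}\right)_q$ by the $q$-Pascal recursion, while $j=0$ and $j=n+1$ are handled by the endpoint identities; hence the right-hand side equals $\sum_{j=0}^{n+1}\left(\begin{array}{c}n+1\\j\end{array}\right)_q t^{j}q^{j(j-1)/2}$, completing the induction.

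I do not expect a real obstacle here; the argument is a textbook induction. If forced to name the delicate point, it is matching the correct form of the $q$-Pascal recursion to the way the product is peeled: I chose to factor off $1+tq^{n}$, which pairs with the form displayed above. An equally valid alternative is to factor off $1+t$ and apply the inductive hypothesis with $tq$ in place of $t$, which instead uses the other standard form $\left(\begin{array}{c}n+1\\k\end{array}\right)_q=q^{k}\left(\begin{array}{c}n\\k\end{array}\right)_q+\left(\begin{array}{c}n\\k-1\end{array}\right)_q$. Either route reduces the theorem to a one-line exponent identity plus the $q$-Pascal recursion.
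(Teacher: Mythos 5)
Your proof is correct. Note, however, that the paper does not actually prove Theorem \ref{thm:-2}; it simply cites \cite[Corollary 10.2.2]{AAR99}, so your induction supplies a self-contained elementary argument where the paper relies on a reference. The two computational pivots check out: the $q$-Pascal rule follows exactly as you say, since over the common denominator $(q)_k(q)_{n+1-k}$ the numerator is $(q)_n\big[(1-q^{\,n+1-k})+q^{\,n+1-k}(1-q^{k})\big]=(q)_n(1-q^{\,n+1})=(q)_{n+1}$, and the exponent bookkeeping $(j-1)(j-2)/2+n=j(j-1)/2+(n+1-j)$ is an identity, so peeling off the factor $1+tq^{n}$ and reindexing does recombine via $q$-Pascal into the coefficients $\left(\begin{smallmatrix}n+1\\ j\end{smallmatrix}\right)_q$, with the endpoints $j=0$ and $j=n+1$ handled trivially. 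One small caveat worth a sentence in a polished write-up: the statement as printed says ``any $q\in\mathbb{R}$,'' but Definition \ref{def:} requires $q\neq1$, and for real $q$ the denominators $(q)_k$ can also vanish at $q=-1$ (for $k\ge2$), where both your $q$-Pascal manipulation and the coefficient itself are formally $0/0$. The clean fix, which your induction in fact delivers, is to observe that $q$-Pascal shows inductively that each $\left(\begin{smallmatrix}n\\ k\end{smallmatrix}\right)_q$ is a polynomial in $q$ (the Gaussian polynomial), so the identity, being an equality of polynomials in $q$ and $t$ valid off a finite exceptional set, extends to all real $q$ by continuity; this also subsumes the $q\to1$ limit you mention for the classical binomial theorem.
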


\begin{defn}
\label{def:-1}Let $q\in\mathbb{R},|q|<1$. The \emph{$q$-gamma function}
is defined by
\[
\Gamma_{q}(x)=(1-q)^{1-x}\prod_{n=0}^{\infty}\frac{1-q^{n+1}}{1-q^{n+x}}.
\]
Lemma \ref{lem:-3} below shows that the $q$-gamma function converges
to the ordinary gamma function. A proof of Lemma \ref{lem:-3} can
be found in \cite[Corollary 10.3.4, page 495]{AAR99}.
\end{defn}

\begin{lem}
\label{lem:-3}Let $\Gamma(x)=\int_{0}^{\infty}t^{x-1}e^{-t}dt$ be
the ordinary gamma function. Then 
\[
\lim_{q\to1-}\Gamma_{q}(x)=\Gamma(x).
\]
\end{lem}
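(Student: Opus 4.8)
The plan is to compute $\lim_{q\to 1^-}\Gamma_q(x)$ directly, by turning $\log\Gamma_q(x)$ into a Riemann sum that converges, as the mesh $-\log q\to 0$, to one of the classical integral representations of $\log\Gamma(x)$. First I would collapse the product defining $\Gamma_q(x)$ into a single series. For $0<q<1$ and $x>0$ the product $\prod_{n\ge 0}\frac{1-q^{n+1}}{1-q^{n+x}}$ converges; taking logarithms, expanding each factor by $\log(1-y)=-\sum_{m\ge1}y^m/m$, and interchanging the two absolutely convergent sums, I get $(1-x)\log(1-q)+\sum_{m\ge1}\frac1m\frac{q^{mx}-q^m}{1-q^m}$. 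Using $\log(1-q)=-\sum_{m\ge1}q^m/m$ to absorb the first term gives the key identity
\[
\log\Gamma_q(x)=\sum_{m\ge1}\frac1m\Big[(x-1)q^m+\frac{q^{mx}-q^m}{1-q^m}\Big].
\]

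Next I would pass to the limit $q\to1^-$. Writing $s=-\log q>0$ and $u=sm$, the $m$‑th summand equals $s\,g_x(sm)$, where
\[
g_x(u)=\frac1u\Big[(x-1)e^{-u}+\frac{e^{-ux}-e^{-u}}{1-e^{-u}}\Big],
\]
so $\log\Gamma_q(x)=\sum_{m\ge1}s\,g_x(sm)$ is a left‑endpoint Riemann sum of step $s$ for $\int_0^\infty g_x(u)\,du$. A short Taylor expansion shows the apparent $1/u$ singularities at $u=0$ cancel, so $g_x$ extends to a smooth function on $[0,\infty)$ (with, e.g., $g_x(0)=(1-x)(2-x)/2$), while $g_x$ and $g_x'$ decay exponentially as $u\to\infty$ for every $x>0$; hence $g_x,g_x'\in L^1(0,\infty)$, and the elementary bound $\big|\sum_{m\ge1}s\,g_x(sm)-\int_0^\infty g_x\big|\le s\int_0^\infty|g_x'|$ yields $\lim_{q\to1^-}\log\Gamma_q(x)=\int_0^\infty g_x(u)\,du$.

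It then remains to identify $\int_0^\infty g_x(u)\,du$ with $\log\Gamma(x)$. The quickest route is to invoke Malmsten's (second Binet) formula $\log\Gamma(x)=\int_0^\infty\big[(x-1)e^{-u}+\frac{e^{-ux}-e^{-u}}{1-e^{-u}}\big]\frac{du}{u}$. A self‑contained alternative is to set $G(x):=\int_0^\infty g_x(u)\,du$ and check the Bohr--Mollerup hypotheses: $G(1)=0$ since the integrand vanishes identically; $G(x+1)-G(x)=\int_0^\infty\frac{e^{-u}-e^{-xu}}{u}\,du=\log x$ by the Frullani integral; and $G''(x)=\int_0^\infty\frac{u\,e^{-xu}}{1-e^{-u}}\,du\ge0$ (differentiating under the integral sign, which is justified by the exponential decay). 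Hence $e^{G}$ equals the Gamma function, and $\lim_{q\to1^-}\Gamma_q(x)=e^{G(x)}=\Gamma(x)$.

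The main obstacle is the second step: the convergence of the Riemann sum must be established carefully, because the two pieces $\frac{(x-1)q^m}{m}$ and $\frac1m\frac{q^{mx}-q^m}{1-q^m}$ each diverge as $q\to1^-$ and only their sum has a finite limit, so one must exploit the cancellation of the $1/u$ singularity of $g_x$ at the origin before attempting any estimate, and control the tail uniformly in $s$. A softer alternative that sidesteps the integral is to apply Bohr--Mollerup to the limit itself: telescoping the product gives $\Gamma_q(x+1)=\frac{1-q^x}{1-q}\,\Gamma_q(x)$ and $\Gamma_q(1)=1$, and $\log\Gamma_q$ is convex on $(0,\infty)$ since each term $-\log(1-q^{n+x})$ has second derivative $q^{n+x}(\log q)^2/(1-q^{n+x})^2\ge0$; one then needs only pointwise convergence of $\Gamma_q$ on a single unit interval, after which the limit $F(x):=\lim_{q\to1^-}\Gamma_q(x)$ inherits $F(1)=1$, $F(x+1)=xF(x)$ and log‑convexity, forcing $F=\Gamma$.
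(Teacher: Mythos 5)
Your argument is correct, and it is genuinely different from what the paper does: the paper offers no proof of this lemma at all, simply citing \cite[Corollary 10.3.4]{AAR99}, where the result is obtained from the $q$-analogue of Euler's limit/product definition of the gamma function. Your route is self-contained: the series identity $\log\Gamma_q(x)=\sum_{m\ge1}\frac1m\big[(x-1)q^m+\frac{q^{mx}-q^m}{1-q^m}\big]$ is a legitimate rearrangement (the double series is absolutely convergent for $0<q<1$, $x>0$), the singularity cancellation at $u=0$ giving $g_x(0)=(1-x)(2-x)/2$ checks out, and the bound $\big|\sum_{m\ge1}s\,g_x(sm)-\int_0^\infty g_x\big|\le s\int_0^\infty|g_x'|$ is valid since $g_x,g_x'\in L^1(0,\infty)$, so $\log\Gamma_q(x)\to\int_0^\infty g_x(u)\,du$ with an explicit $O(\log q^{-1})$ rate — a small bonus the citation does not give. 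The identification of the limit via Malmsten's formula, or alternatively via Bohr--Mollerup applied to $G$ (the computations $G(1)=0$, $G(x+1)-G(x)=\log x$ by Frullani, $G''\ge0$ are all correct), completes the proof. One caveat: the ``softer alternative'' in your last paragraph is not a proof on its own, since the pointwise convergence of $\Gamma_q$ on a unit interval that it presupposes is essentially the content of the lemma; to make it self-contained you would need an extra compactness step (e.g. convergence at the integers $\Gamma_q(n)\to(n-1)!$ plus log-convexity to get local uniform bounds and subsequential limits, each forced to be $\Gamma$ by Bohr--Mollerup). As it stands, your main argument is complete and the alternative should be viewed only as a sketch.
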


We now turn to the computation of the integrals $\int_{0}^{\infty}\prod_{k=0}^{\infty}\big(1+t^{2}\lambda^{k}\big)^{-1}dt$
and $\int_{0}^{\infty}\prod_{k=0}^{\infty}(1+t\lambda^{k})^{-1}dt$.
\begin{lem}
\label{lem:-6}For any $n\ge1$,
\begin{equation}
\int_{0}^{\infty}\prod_{k=0}^{n}(1+t^{2}\lambda^{k})^{-1}dt=\frac{\pi}{2}\prod_{k=0}^{n-1}\frac{1-\lambda^{k+1/2}}{1-\lambda^{k+1}},\label{eq:-18}
\end{equation}
and therefore,
\begin{equation}
\int_{0}^{\infty}\prod_{k=0}^{\infty}(1+t^{2}\lambda^{k})^{-1}dt=\frac{\pi}{2}\frac{(1-\lambda)^{1/2}}{\Gamma_{\lambda}(1/2)}.\label{eq:-19}
\end{equation}
\end{lem}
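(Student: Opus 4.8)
The plan is to prove the finite-product identity (\ref{eq:-18}) first, by a partial-fraction expansion of the rational integrand combined with the $q$-binomial theorem, and then to deduce (\ref{eq:-19}) by letting $n\to\infty$ and recognising the limiting infinite product via the definition of $\Gamma_{\lambda}$. First I would decompose the integrand: for fixed $n$, $\prod_{k=0}^{n}(1+t^{2}\lambda^{k})^{-1}$ is a proper rational function of $t$ whose denominator factors into the $n+1$ pairwise distinct irreducible quadratics $1+t^{2}\lambda^{k}$ (distinct since $\lambda\in(0,1)$), so
\[
\prod_{k=0}^{n}\frac{1}{1+t^{2}\lambda^{k}}=\sum_{k=0}^{n}\frac{c_{k}}{1+t^{2}\lambda^{k}},\qquad c_{k}=\prod_{\substack{0\le j\le n\\ j\ne k}}\frac{1}{1-\lambda^{j-k}},
\]
the value of $c_{k}$ being obtained by multiplying through by $1+t^{2}\lambda^{k}$ and setting $t^{2}=-\lambda^{-k}$. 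Since $\int_{0}^{\infty}(1+at^{2})^{-1}\,dt=\tfrac{\pi}{2}a^{-1/2}$ for $a>0$, integrating term by term gives $\int_{0}^{\infty}\prod_{k=0}^{n}(1+t^{2}\lambda^{k})^{-1}\,dt=\tfrac{\pi}{2}\sum_{k=0}^{n}c_{k}\lambda^{-k/2}$.

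The next step is to evaluate this sum in closed form. Rewriting the factors of $c_{k}$ with negative exponent via $\tfrac{1}{1-\lambda^{-i}}=\tfrac{-\lambda^{i}}{1-\lambda^{i}}$, a short computation puts $c_{k}$ in the form $c_{k}=(-1)^{k}\lambda^{k(k+1)/2}\big(\prod_{j=1}^{n}(1-\lambda^{j})\big)^{-1}\binom{n}{k}_{\lambda}$, with $\binom{n}{k}_{\lambda}$ the $\lambda$-binomial coefficient of Definition \ref{def:}. Using $k(k+1)/2-k/2=k(k-1)/2+k/2$ one then obtains
\[
\sum_{k=0}^{n}c_{k}\lambda^{-k/2}=\frac{1}{\prod_{j=1}^{n}(1-\lambda^{j})}\sum_{k=0}^{n}\binom{n}{k}_{\lambda}(-\lambda^{1/2})^{k}\lambda^{k(k-1)/2}=\frac{\prod_{k=0}^{n-1}(1-\lambda^{k+1/2})}{\prod_{k=0}^{n-1}(1-\lambda^{k+1})},
\]
where the last equality is the $q$-binomial theorem (Theorem \ref{thm:-2}) specialised to $q=\lambda$, $t=-\lambda^{1/2}$; this proves (\ref{eq:-18}). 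For (\ref{eq:-19}) I would note that the partial products $\prod_{k=0}^{n}(1+t^{2}\lambda^{k})^{-1}$ decrease in $n$, are dominated by the integrable function $(1+t^{2})^{-1}$, and converge pointwise since $\sum_{k}\lambda^{k}<\infty$; dominated convergence then lets me pass to the limit on the left of (\ref{eq:-18}), while Definition \ref{def:-1} with $x=\tfrac12$, $q=\lambda$ gives $\prod_{k=0}^{\infty}\tfrac{1-\lambda^{k+1/2}}{1-\lambda^{k+1}}=(1-\lambda)^{1/2}/\Gamma_{\lambda}(1/2)$ on the right, which is (\ref{eq:-19}).

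The main obstacle is the bookkeeping in the second step: rewriting $c_{k}$ in terms of $\binom{n}{k}_{\lambda}$ and matching the resulting alternating $q$-sum to the correct instance of the $q$-binomial theorem. If one wished to avoid the $q$-series identities for (\ref{eq:-18}), there is a self-contained alternative: writing $J_{n}$ for the left-hand side of (\ref{eq:-18}), the scaling $t\mapsto\lambda^{-1/2}t$ together with $t^{2}(1+t^{2})^{-1}=1-(1+t^{2})^{-1}$ reduces the difference $J_{n}-J_{n+1}$ to the linear recursion $(1-\lambda^{n+1/2})J_{n}=(1-\lambda^{n+1})J_{n+1}$ with $J_{0}=\pi/2$, which iterates directly to (\ref{eq:-18}); but the partial-fraction route stays closest to the $q$-analytic machinery of this section.
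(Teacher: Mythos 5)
Your proof is correct, and it reaches (\ref{eq:-18}) by a genuinely more elementary route than the paper. The paper integrates $\log(z)\prod_{k=0}^{n}(1+z^{2}\lambda^{k})^{-1}$ over a keyhole contour, estimates the contribution of the large circle, and sums the residues at $\pm\mathrm{i}\lambda^{-k/2}$; you instead expand the even rational integrand in partial fractions and use $\int_{0}^{\infty}(1+at^{2})^{-1}dt=\tfrac{\pi}{2}a^{-1/2}$. The two computations are the real- and complex-analytic faces of the same decomposition -- your coefficients $c_{k}\lambda^{-k/2}$ are, up to the factor $\pi/2$, exactly the paired residues in the paper -- and both then close the argument with the same specialisation $t=-\lambda^{1/2}$ of the $q$-binomial theorem (Theorem \ref{thm:-2}); your rewriting of $c_{k}$ as $(-1)^{k}\lambda^{k(k+1)/2}\big(\prod_{j=1}^{n}(1-\lambda^{j})\big)^{-1}\left(\begin{array}{c}n\\k\end{array}\right)_{\lambda}$ and the exponent bookkeeping $k^{2}/2=k(k-1)/2+k/2$ check out. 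What your route buys is the elimination of the branch-cut/contour machinery and the circle estimate; what it costs is nothing here, since the partial-fraction coefficients are just as easy to evaluate. Two further points in your favour: you justify the passage $n\to\infty$ in (\ref{eq:-19}) explicitly by monotonicity/domination by $(1+t^{2})^{-1}$, a step the paper leaves implicit, and your closing remark is sound -- the scaling $t\mapsto\lambda^{1/2}t$ plus $t^{2}(1+t^{2})^{-1}=1-(1+t^{2})^{-1}$ does yield $(1-\lambda^{n+1/2})J_{n}=(1-\lambda^{n+1})J_{n+1}$ with $J_{0}=\pi/2$, giving a self-contained induction that avoids the $q$-binomial theorem altogether.
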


\begin{proof}
Let $f(z)=\log(z)\prod_{k=0}^{n}(1+z^{2}\lambda^{k})^{-1}$, $z\in\mathbb{C}\backslash[0,\infty)$.
We select the analytic branch of $\log(z)$ such that $\text{Im}[\log(z)]\in(0,2\pi)$.
Let $0<\epsilon<R$ with $R$ sufficiently large. Consider the contour
which goes from $\epsilon$ to $R$ along the upper side of the $x$-axis,
then anti-clock wise along the circle $|z|=R$, then from $R$ to
$\epsilon$ along the lower side of the $x$-axis (see Figure \ref{fig:-6}),
and lastly from $\epsilon-\text{i}\epsilon$ to $\epsilon+\text{i}\epsilon$
vertically. By the residue theorem, integrating $f(z)$ along this
contour and setting $\epsilon\to0$ gives
\begin{equation}
\begin{aligned}\int_{0}^{R}\log(t) & \prod_{k=0}^{n}(1+t^{2}\lambda^{k})^{-1}dt-\int_{0}^{R}\big[\log(t)+\text{i}2\pi\big]\prod_{k=0}^{n}(1+t^{2}\lambda^{k})^{-1}dt\\
 & +\int_{|z|=R}\log(z)\prod_{k=0}^{n}(1+z^{2}\lambda^{k})^{-1}dz=\text{i}2\pi\sum_{k=0}^{n}\big(\text{Res}(f,\text{i}\lambda^{-k/2})+\text{Res}(f,-\text{i}\lambda^{-k/2})\big),
\end{aligned}
\label{eq:-17}
\end{equation}
where the integration along the vertical line from $\epsilon-\text{i}\epsilon$
to $\epsilon+\text{i}\epsilon$ vanishes due to the boundedness of
$f(z)$ on this line. Note that
\[
\Big|\int_{|z|=R}\log(z)\prod_{k=0}^{n}(1+z^{2}\lambda^{k})^{-1}dz\Big|\le\frac{2\pi R\log(R)}{\lambda^{n+1}R^{2n+2}-1}\to0,
\]
as $R\to\infty$. Setting $R\to\infty$ in (\ref{eq:-17}) gives
\[
\int_{0}^{\infty}\prod_{k=0}^{n}(1+t^{2}\lambda^{k})^{-1}dt=-\sum_{k=0}^{n}\big(\text{Res}(f,\text{i}\lambda^{-k/2})+\text{Res}(f,-\text{i}\lambda^{-k/2})\big).
\]
\begin{figure}[h]
\centering{}\includegraphics[scale=0.75]{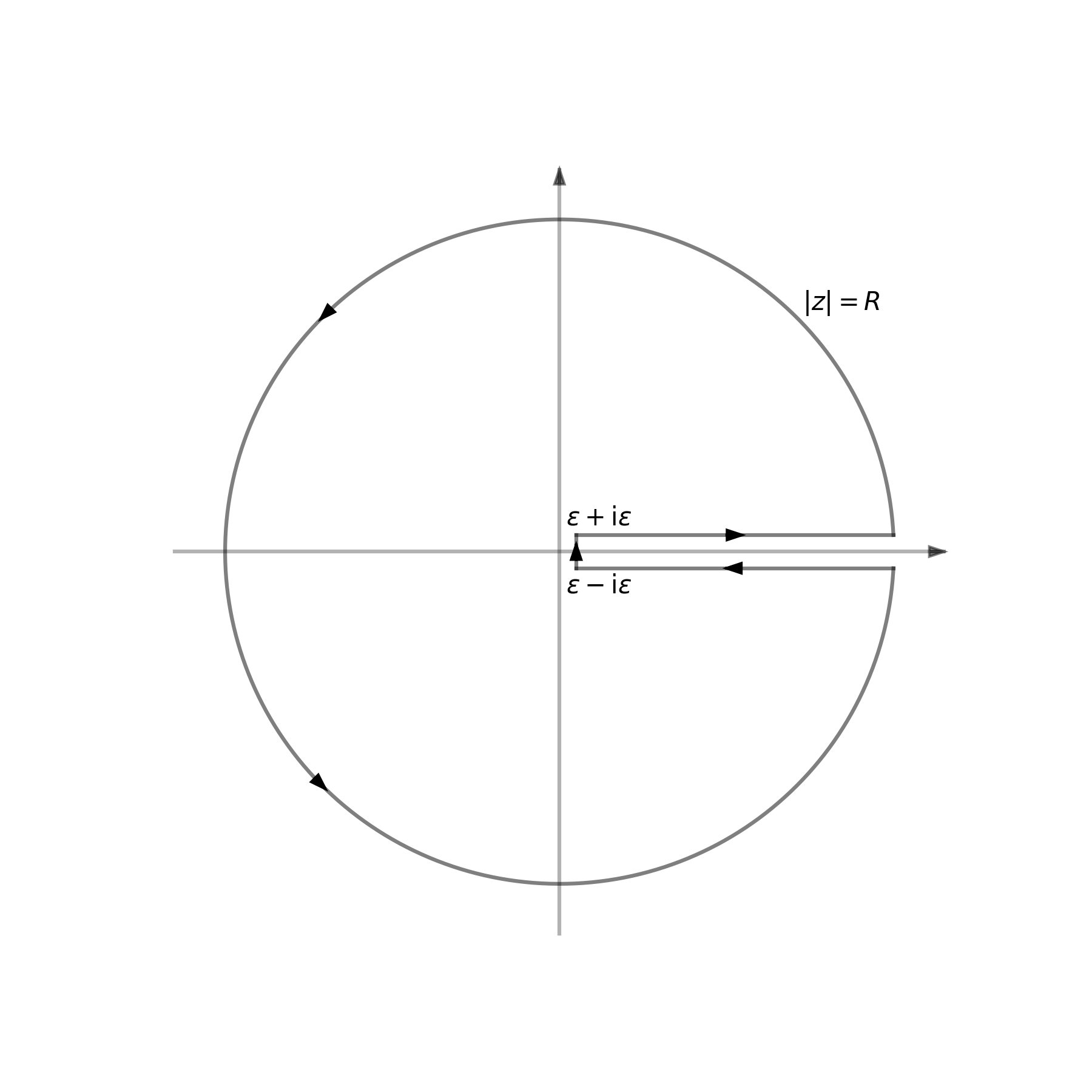}\caption{Integration contour}
\label{fig:-6}
\end{figure}
It is easy to see that
\begin{align*}
\text{Res}(f,\text{i}\lambda^{-k/2})+\text{Res}(f,-\text{i}\lambda^{-k/2}) & =-\frac{\pi}{2}\frac{\lambda^{-k/2}}{\prod_{0\le j\le n,j\not=k}^{n}(1-\lambda^{j-k})}=\frac{\pi}{2\prod_{j=1}^{n}(1-\lambda^{j})}\left(\begin{array}{c}
n\\
k
\end{array}\right)_{\lambda}(-1)^{k-1}\lambda^{k^{2}/2}.
\end{align*}
Therefore, by Theorem \ref{thm:-2},
\[
\begin{aligned}\int_{0}^{\infty}\prod_{k=0}^{n}(1+t^{2}\lambda^{k})^{-1}dt & =\frac{\pi}{2\prod_{j=1}^{n}(1-\lambda^{j})}\sum_{k=0}^{n}\left(\begin{array}{c}
n\\
k
\end{array}\right)_{\lambda}(-1)^{k}\lambda^{k^{2}/2}\\
 & =\frac{\pi}{2\prod_{j=1}^{n}(1-\lambda^{j})}\sum_{k=0}^{n}\left(\begin{array}{c}
n\\
k
\end{array}\right)_{\lambda}(-\lambda^{1/2})^{k}\lambda^{k(k-1)/2}=\frac{\pi}{2}\frac{\prod_{k=0}^{n-1}(1-\lambda^{k+1/2})}{\prod_{k=0}^{n-1}(1-\lambda^{k+1})}.
\end{aligned}
\]
This completes the proof of (\ref{eq:-18}). The equality (\ref{eq:-19})
follows from (\ref{eq:-18}) and Definition \ref{def:-1}.
\end{proof}
\begin{lem}
\label{lem:-2}For any $n\ge1$,
\begin{equation}
\int_{0}^{\infty}\prod_{k=0}^{n}(1+t\lambda^{k})^{-1}dt=\frac{\log(\lambda^{-1})}{1-\lambda^{n}},\label{eq:-8}
\end{equation}
and therefore,
\begin{equation}
\int_{0}^{\infty}\prod_{k=0}^{\infty}(1+t\lambda^{k})^{-1}dt=\log(\lambda^{-1}).\label{eq:-12}
\end{equation}
\end{lem}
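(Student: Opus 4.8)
The plan is to establish the finite-product identity (\ref{eq:-8}) by induction on $n$, and then deduce (\ref{eq:-12}) by letting $n\to\infty$. Write $I_n=\int_0^\infty\prod_{k=0}^n(1+t\lambda^k)^{-1}\,dt$; since $\lambda\in(0,1)$, the integrand decays like $t^{-(n+1)}$ at infinity, so $I_n<\infty$ for every $n\ge1$. The engine of the argument is the elementary partial-fraction identity
\begin{equation}
\frac{1-\lambda^n}{(1+t)(1+t\lambda^n)}=\frac{1}{1+t}-\frac{\lambda^n}{1+t\lambda^n},\label{eq:plan-a}
\end{equation}
obtained by clearing denominators. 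Multiplying (\ref{eq:plan-a}) through by $\prod_{k=1}^{n-1}(1+t\lambda^k)^{-1}$ yields
\begin{equation}
(1-\lambda^n)\prod_{k=0}^{n}\frac{1}{1+t\lambda^k}=\prod_{k=0}^{n-1}\frac{1}{1+t\lambda^k}-\lambda^n\prod_{k=1}^{n}\frac{1}{1+t\lambda^k}.\label{eq:plan-b}
\end{equation}

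For the base case $n=1$, integrate (\ref{eq:plan-b}) over $[0,R]$; the right-hand side equals $\log\frac{1+R}{1+R\lambda}$, which tends to $\log(\lambda^{-1})$ as $R\to\infty$, so $(1-\lambda)I_1=\log(\lambda^{-1})$, proving (\ref{eq:-8}) for $n=1$. For $n\ge2$, every term in (\ref{eq:plan-b}) is separately integrable over $(0,\infty)$: the first term on the right integrates to $I_{n-1}$, while in the second the substitution $s=\lambda t$ gives $\int_0^\infty\prod_{k=1}^n(1+t\lambda^k)^{-1}\,dt=\frac1\lambda\int_0^\infty\prod_{j=0}^{n-1}(1+s\lambda^j)^{-1}\,ds=\frac1\lambda I_{n-1}$. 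Hence $(1-\lambda^n)I_n=I_{n-1}-\lambda^{n-1}I_{n-1}=(1-\lambda^{n-1})I_{n-1}$, so the quantity $(1-\lambda^n)I_n$ does not depend on $n$; together with the base case this gives $(1-\lambda^n)I_n=\log(\lambda^{-1})$ for all $n\ge1$, which is (\ref{eq:-8}).

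Finally, (\ref{eq:-12}) follows by letting $n\to\infty$. Since each factor satisfies $(1+t\lambda^k)^{-1}\le1$ for $t\ge0$, the partial products $\prod_{k=0}^n(1+t\lambda^k)^{-1}$ decrease pointwise to $\prod_{k=0}^\infty(1+t\lambda^k)^{-1}$ (the infinite product converges because $\sum_{k\ge0}t\lambda^k<\infty$), and for $n\ge2$ they are dominated by the integrable function $\prod_{k=0}^2(1+t\lambda^k)^{-1}$; dominated convergence then gives $\int_0^\infty\prod_{k=0}^\infty(1+t\lambda^k)^{-1}\,dt=\lim_{n\to\infty}I_n=\lim_{n\to\infty}\frac{\log(\lambda^{-1})}{1-\lambda^n}=\log(\lambda^{-1})$. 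There is no genuine obstacle in this argument; the only point requiring slight care is the separate handling of $n=1$, where the two products on the right of (\ref{eq:plan-b}) are individually non-integrable and must be kept together as $\log\frac{1+R}{1+R\lambda}$ before passing to the limit. One could instead mimic the residue/keyhole-contour computation used for Lemma \ref{lem:-6}, but that route requires an additional $q$-binomial evaluation of the resulting sum of residues, whereas the telescoping recursion above avoids it.
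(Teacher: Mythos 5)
Your argument is correct, and it is a genuinely different route from the paper's. You prove the finite-product formula (\ref{eq:-8}) by the partial-fraction identity $\frac{1-\lambda^{n}}{(1+t)(1+t\lambda^{n})}=\frac{1}{1+t}-\frac{\lambda^{n}}{1+t\lambda^{n}}$, multiplying through by the middle factors and using the scaling substitution $s=\lambda t$ to get the telescoping recursion $(1-\lambda^{n})I_{n}=(1-\lambda^{n-1})I_{n-1}$, with the $n=1$ case handled by truncating at $R$ and computing $\log\frac{1+R}{1+R\lambda}\to\log(\lambda^{-1})$ before splitting the two (individually non-integrable) terms; the passage to (\ref{eq:-12}) by monotone/dominated convergence with the dominating factor $\prod_{k=0}^{2}(1+t\lambda^{k})^{-1}$ is also sound. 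The paper instead evaluates the same integral by the residue theorem applied to $\log(z)\prod_{k=0}^{n}(1+z\lambda^{k})^{-1}$ on a keyhole-type contour, and then needs the $q$-binomial theorem (Theorem \ref{thm:-2}) together with its differentiated form to collapse the resulting sum of residues. What your approach buys is economy and self-containedness: no complex analysis and no $q$-series identities are needed, and the integrability bookkeeping is elementary. What the paper's approach buys is uniformity with Lemma \ref{lem:-6}, where the answer is a genuine $q$-product and the residue-plus-$q$-binomial machinery is set up anyway; it is worth noting, though, that your telescoping also adapts to that case (use $s=\lambda^{1/2}t$ in the scaling step), reproducing (\ref{eq:-18}), so the elementary route could in principle replace both contour computations.
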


\begin{proof}
Let $f(z)=\log(z)\prod_{k=0}^{n}(1+z\lambda^{k})^{-1}$, $z\in\mathbb{C}\backslash[0,\infty)$.
Again we select the analytic branch of $\log(z)$ such that $\text{Im}[\log(z)]\in(0,2\pi)$.
Consider the same contour as in the proof of Lemma \ref{lem:-6}.
By the residue theorem, integrating $f(z)$ along this contour gives
\begin{equation}
\begin{aligned}\int_{0}^{R}\log(t)\prod_{k=0}^{n}(1+t\lambda^{k})^{-1}dt & -\int_{0}^{R}\big[\log(t)+\text{i}2\pi\big]\prod_{k=0}^{n}(1+t\lambda^{k})^{-1}dt\\
 & +\int_{|z|=R}\log(z)\prod_{k=0}^{n}(1+z\lambda^{k})^{-1}dz=\text{i}2\pi\sum_{k=0}^{n}\text{Res}(f,-\lambda^{-k}).
\end{aligned}
\label{eq:-6}
\end{equation}
It is easily seen that $\Big|\int_{|z|=R}\log(z)\prod_{k=0}^{n}(1+z\lambda^{k})^{-1}dz\Big|\le\frac{2\pi R\log(R)}{\lambda^{n}R^{n+1}-1}\to0$
as $R\to\infty$. Therefore, setting $R\to\infty$ in (\ref{eq:-6})
gives that
\[
\int_{0}^{\infty}\prod_{k=0}^{n}(1+t\lambda^{k})^{-1}dt=-\sum_{k=0}^{n}\text{Res}(f,-\lambda^{-k}).
\]
Clearly,
\[
\begin{aligned}\text{Res}(f,-\lambda^{-k}) & =\frac{\lambda^{-k}\log(-\lambda^{-k})}{\prod_{0\le j\le n,j\not=k}(1-\lambda^{j-k})}=\frac{\lambda^{-k}[k\log(\lambda^{-1})+\text{i}\pi]}{\big[\prod_{j=1}^{k}(1-\lambda^{-j})\big]\big[\prod_{j=1}^{n-k}(1-\lambda^{j})\big]}\\
 & =\frac{1}{\prod_{j=1}^{n}(1-\lambda^{j})}\left(\begin{array}{c}
n\\
k
\end{array}\right)_{\lambda}(-1)^{k}\lambda^{k(k-1)/2}[k\log(\lambda^{-1})+\text{i}\pi].
\end{aligned}
\]
Therefore,
\begin{equation}
\int_{0}^{\infty}\prod_{k=0}^{n}(1+t\lambda^{k})^{-1}dt=\frac{1}{\prod_{j=1}^{n}(1-\lambda^{j})}\sum_{k=0}^{n}\left(\begin{array}{c}
n\\
k
\end{array}\right)_{\lambda}(-1)^{k-1}\lambda^{k(k-1)/2}[k\log(\lambda^{-1})+\text{i}\pi].\label{eq:-9}
\end{equation}
By Theorem \ref{thm:-2}, 
\begin{equation}
\sum_{k=0}^{n}\left(\begin{array}{c}
n\\
k
\end{array}\right)_{\lambda}s^{k}\lambda^{k(k-1)/2}=\prod_{k=0}^{n-1}(1+s\lambda^{k}).\label{eq:-7}
\end{equation}
Hence
\begin{equation}
\sum_{k=0}^{n}\left(\begin{array}{c}
n\\
k
\end{array}\right)_{\lambda}(-1)^{k}\lambda^{k(k-1)/2}=\prod_{k=0}^{n-1}(1-\lambda^{k})=0.\label{eq:-10}
\end{equation}
Differentiating (\ref{eq:-7}) at $s=-1$ gives
\begin{equation}
\sum_{k=0}^{n}\left(\begin{array}{c}
n\\
k
\end{array}\right)_{\lambda}k(-1)^{k-1}\lambda^{k(k-1)/2}=\prod_{k=1}^{n-1}(1-\lambda^{k}).\label{eq:-11}
\end{equation}
Combining (\ref{eq:-9}), (\ref{eq:-10}), and (\ref{eq:-11}) proves
(\ref{eq:-8}).
\end{proof}
We can now derive the upper and lower bounds for $U(\lambda)$ and
$V(\lambda)$.
\begin{lem}
\label{lem:-4}(i) For any $\lambda\in(0,1)$,
\[
\frac{\lambda^{-1}\log(\lambda^{-1})}{\lambda^{-1}-1}\le V(\lambda)\le\frac{\lambda^{-2}\log(\lambda^{-1})}{\lambda^{-1}-1},
\]
and therefore
\[
\lim_{\lambda\to1-}V(\lambda)=1.
\]

(ii) If, in addition, $\lambda\in(0.7,1)$, then
\[
\frac{\lambda^{-1.45}\log(\lambda^{-1})}{\lambda^{-1}-1}\le V(\lambda)\le\frac{\lambda^{-1.5}\log(\lambda^{-1})}{\lambda^{-1}-1}.
\]
\end{lem}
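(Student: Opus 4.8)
The plan is to isolate the integral $W(\lambda):=\int_{0}^{\infty}\prod_{k=0}^{\infty}(1+t\lambda^{k})^{-1/2}\,dt$, so that $V(\lambda)=W(\lambda)/(2(1-\lambda))$, and to rephrase the asserted bounds as bounds on $W(\lambda)$. Using $1/(1-\lambda)=\lambda^{-1}/(\lambda^{-1}-1)$, part (i) is equivalent to $2\log\lambda^{-1}\le W(\lambda)\le 2\lambda^{-1}\log\lambda^{-1}$ and part (ii) to $2\lambda^{-0.45}\log\lambda^{-1}\le W(\lambda)\le 2\lambda^{-1/2}\log\lambda^{-1}$; the limit $\lim_{\lambda\to1-}V(\lambda)=1$ then drops out of part (i) since $\log\lambda^{-1}/(1-\lambda)\to1$.

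The engine of the argument is the even/odd splitting of the index set of the $q$-product. Writing $P(t)=\prod_{k=0}^{\infty}(1+t\lambda^{k})$, one has $P(t)=\prod_{j=0}^{\infty}(1+t\lambda^{2j})\cdot\prod_{j=0}^{\infty}(1+t\lambda^{2j+1})$, hence $P(t)^{1/2}=\prod_{j=0}^{\infty}G_{j}(t)$ with $G_{j}(t)=\big[(1+t\lambda^{2j})(1+t\lambda^{2j+1})\big]^{1/2}$. Since $G_{j}(t)^{2}=1+(1+\lambda)t\lambda^{2j}+\lambda t^{2}\lambda^{4j}$, comparing with the squares of suitable linear functions of $t$ yields, for every $t\ge0$ and $\lambda\in(0,1)$, the crude estimates $1+t\lambda^{2j+1}\le G_{j}(t)\le 1+t\lambda^{2j}$ (monotonicity of the two factors) and the sharp estimates
\[
1+t\lambda^{2j+1/2}\ \le\ G_{j}(t)\ \le\ 1+\tfrac{1+\lambda}{2}\,t\lambda^{2j},
\]
the left one being $G_{j}^{2}-(1+t\lambda^{2j+1/2})^{2}=(1-\sqrt\lambda)^{2}t\lambda^{2j}\ge0$ and the right one $(1+\tfrac{1+\lambda}{2}t\lambda^{2j})^{2}-G_{j}^{2}=\tfrac{(1-\lambda)^{2}}{4}t^{2}\lambda^{4j}\ge0$. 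Multiplying over $j$, taking reciprocals, and integrating, I would invoke Lemma \ref{lem:-2} (with $\lambda$ replaced by $\lambda^{2}$, followed by the scaling $t\mapsto t/c$), which gives $\int_{0}^{\infty}\prod_{j=0}^{\infty}(1+ct\lambda^{2j})^{-1}\,dt=\frac{2}{c}\log\lambda^{-1}$ for any $c>0$. The crude estimates then give $2\log\lambda^{-1}\le W(\lambda)\le 2\lambda^{-1}\log\lambda^{-1}$, which is part (i); the sharp estimates give $\frac{4}{1+\lambda}\log\lambda^{-1}\le W(\lambda)\le 2\lambda^{-1/2}\log\lambda^{-1}$ for all $\lambda\in(0,1)$, which already yields the upper bound in part (ii) because $\lambda^{-1/2}/(1-\lambda)=\lambda^{-1.5}/(\lambda^{-1}-1)$.

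What is left is to compare the lower bound $\frac{4}{1+\lambda}\log\lambda^{-1}$ with the claimed $2\lambda^{-0.45}\log\lambda^{-1}$, which amounts to the scalar inequality $2\lambda^{0.45}\ge 1+\lambda$ on $(0.7,1)$. I would verify this by elementary calculus: both sides equal $2$ at $\lambda=1$, the difference $g(\lambda)=2\lambda^{0.45}-(1+\lambda)$ is positive at $\lambda=0.7$, and $g'(\lambda)=0.9\lambda^{-0.55}-1$ vanishes exactly once in $(0.7,1)$, so $g$ is unimodal and hence nonnegative throughout the interval; this is where the hypothesis $\lambda>0.7$ is genuinely needed, the inequality failing for somewhat smaller $\lambda$. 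I expect no conceptual obstacle in this proof: the only point requiring care is this last scalar estimate, where the constants $0.7$ and $0.45$ are tuned so that the bound coming from the even/odd factorisation stays between them; everything else is a mechanical consequence of that factorisation, the two comparisons powered by $(1-\sqrt\lambda)^{2}\ge0$ and $(1-\lambda)^{2}\ge0$, and the exact integral of Lemma \ref{lem:-2}.
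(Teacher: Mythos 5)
Your proof is correct and follows essentially the same route as the paper: pair the factors $(1+t\lambda^{2k})(1+t\lambda^{2k+1})$, compare each pair with the square of a linear factor, and evaluate the resulting products via Lemma \ref{lem:-2} applied to $\lambda^{2}$; your part (i) and the upper bound in (ii) coincide with the paper's argument. The only minor difference is the lower bound in (ii), where the paper compares directly with $(1+t\lambda^{2k+0.45})^{2}$ while you use the arithmetic-mean bound $\big(1+\tfrac{1+\lambda}{2}t\lambda^{2k}\big)^{2}$ to get the intermediate estimate $V(\lambda)\ge\frac{2\log\lambda^{-1}}{(1+\lambda)(1-\lambda)}$ on all of $(0,1)$ and then reduce to the scalar inequality $2\lambda^{0.45}\ge 1+\lambda$ on $(0.7,1)$ -- exactly the inequality the paper also needs, which you verify a bit more carefully (via the unimodality of $2\lambda^{0.45}-(1+\lambda)$) than the paper's endpoint evaluation.
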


\begin{proof}
(i) Note that
\[
\begin{aligned}\prod_{k=0}^{\infty}(1+t\lambda^{k})^{-1/2} & =\prod_{k=0}^{\infty}\big[(1+t\lambda^{2k})(1+t\lambda^{2k+1})\big]^{-1/2}\ge\prod_{k=0}^{\infty}(1+t\lambda^{2k})^{-1}.\end{aligned}
\]
Therefore, by Lemma \ref{lem:-2},
\[
V(\lambda)\ge\frac{1}{2(1-\lambda)}\int_{0}^{\infty}\prod_{k=0}^{\infty}(1+t\lambda^{2k})^{-1}dt=\frac{\log(\lambda^{-2})}{2(1-\lambda)}=\frac{\lambda^{-1}\log(\lambda^{-1})}{\lambda^{-1}-1}.
\]
Similarly, we have
\[
\prod_{k=0}^{\infty}(1+t\lambda^{k})^{-1/2}=\prod_{k=0}^{\infty}\big[(1+t\lambda^{2k})(1+t\lambda^{2k+1})\big]^{-1/2}\le\prod_{k=0}^{\infty}(1+t\lambda^{2k+1})^{-1},
\]
and therefore, 
\begin{align*}
V(\lambda) & \le\frac{1}{2(1-\lambda)}\int_{0}^{\infty}\prod_{k=0}^{\infty}(1+t\lambda^{2k+1})^{-1}dt\\
 & =\frac{1}{2\lambda(1-\lambda)}\int_{0}^{\infty}\prod_{k=0}^{\infty}(1+t\lambda^{2k})^{-1}dt=\frac{\lambda^{-2}\log(\lambda^{-1})}{\lambda^{-1}-1}.
\end{align*}
This proves (i).

(ii) The argument of (i) can be slightly modified to prove (ii). By
\begin{equation}
(1+t\lambda^{2k})(1+t\lambda^{2k+1})\ge(1+t\lambda^{2k+1/2})^{2},\label{eq:-145}
\end{equation}
we have
\[
\begin{aligned}V(\lambda) & \le\frac{1}{2(1-\lambda)}\int_{0}^{\infty}\prod_{k=0}^{\infty}(1+t\lambda^{2k+1/2})^{-1}dt=\frac{\lambda^{-3/2}\log(\lambda^{-1})}{\lambda^{-1}-1}.\end{aligned}
\]
On the other hand, we choose $\alpha>0$ such that
\begin{equation}
(1+t\lambda^{2k})(1+t\lambda^{2k+1})\le(1+t\lambda^{2k+\alpha})^{2},\label{eq:-13}
\end{equation}
Once such an $\alpha$ is chosen, we obtain that
\[
V(\lambda)\ge\frac{1}{2(1-\lambda)}\int_{0}^{\infty}\prod_{k=0}^{\infty}(1+t\lambda^{2k+\alpha})^{-1}dt=\frac{\lambda^{-1-\alpha}\log(\lambda^{-1})}{\lambda^{-1}-1}.
\]
The inequality (\ref{eq:-13}) holds if and only if $\lambda$ satisfies
$\lambda^{4k+1}\le\lambda^{4k+2\alpha}$ and $\lambda^{2k}+\lambda^{2k+1}\le2\lambda^{2k+\alpha}$.
These inequalities can be combined to the inequality
\[
\alpha\le\min\Big\{\frac{1}{2},\frac{\log(2)-\log(1+\lambda)}{\log(\lambda^{-1})}\Big\}.
\]
When $\lambda\in(0.7,1)$,
\[
\frac{\log(2)-\log(1+\lambda)}{\log(\lambda^{-1})}\ge\frac{\log(2)-\log(1+0.7)}{\log(1/0.7)}>0.45.
\]
Therefore, the inequality (\ref{eq:-13}) holds for $\alpha=0.45$
and $\lambda\in(0.7,1)$, which completes the proof of (ii).
\end{proof}
\begin{lem}
\label{lem:-1}(i) For any $\lambda\in(0,1)$,
\begin{equation}
\sqrt{\frac{\pi}{2}}\frac{(1+\lambda)^{1/2}}{\Gamma_{\lambda^{2}}(1/2)}\le U(\lambda)\le\lambda^{-1/2}\sqrt{\frac{\pi}{2}}\frac{(1+\lambda)^{1/2}}{\Gamma_{\lambda^{2}}(1/2)},\label{eq:-29}
\end{equation}
and therefore
\begin{equation}
\lim_{\lambda\to1-}U(\lambda)=1.\label{eq:-30}
\end{equation}
 (ii) If, in addition, $\lambda\in(0.7,1)$, then
\begin{equation}
\lambda^{-0.1}\sqrt{\frac{\lambda^{-1}\log\lambda^{-1}}{\lambda^{-1}-1}}\le U(\lambda)\le\frac{\lambda^{-0.125}}{1-2e^{-2\pi^{2}/\log(\lambda^{-1})}}\sqrt{\frac{\lambda^{-1}\log\lambda^{-1}}{\lambda^{-1}-1}}.\label{eq:-67}
\end{equation}
\end{lem}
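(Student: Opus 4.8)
The plan is to follow the template of the proof of Lemma~\ref{lem:-4}: pair up consecutive factors in the infinite product defining $U(\lambda)$, sandwich each pair between two explicitly integrable products, and evaluate the resulting integrals via Lemma~\ref{lem:-6}; for part~(ii) the extra ingredient is a sharp asymptotic for the $q$-gamma value $\Gamma_{\lambda^{2}}(1/2)$. For part~(i), write $\prod_{k=0}^{\infty}(1+t^{2}\lambda^{k})^{-1/2}=\prod_{k=0}^{\infty}\big[(1+t^{2}\lambda^{2k})(1+t^{2}\lambda^{2k+1})\big]^{-1/2}$. Since $\lambda^{2k+1}\le\lambda^{2k}$, we have $(1+t^{2}\lambda^{2k+1})^{2}\le(1+t^{2}\lambda^{2k})(1+t^{2}\lambda^{2k+1})\le(1+t^{2}\lambda^{2k})^{2}$, hence $(1+t^{2}\lambda^{2k})^{-1}\le\big[(1+t^{2}\lambda^{2k})(1+t^{2}\lambda^{2k+1})\big]^{-1/2}\le(1+t^{2}\lambda^{2k+1})^{-1}$. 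Taking products over $k\ge0$ and integrating, formula~(\ref{eq:-19}) (Lemma~\ref{lem:-6} with $\lambda$ replaced by $\lambda^{2}$) gives $\int_{0}^{\infty}\prod_{k}(1+t^{2}\lambda^{2k})^{-1}dt=\frac{\pi}{2}(1-\lambda^{2})^{1/2}\Gamma_{\lambda^{2}}(1/2)^{-1}$, while $t\mapsto t\lambda^{-1/2}$ shows $\int_{0}^{\infty}\prod_{k}(1+t^{2}\lambda^{2k+1})^{-1}dt$ equals $\lambda^{-1/2}$ times this. Multiplying by $\sqrt{2/(\pi(1-\lambda))}$ and using $(1-\lambda^{2})^{1/2}/(1-\lambda)^{1/2}=(1+\lambda)^{1/2}$ gives $\sqrt{\pi/2}\,(1+\lambda)^{1/2}\Gamma_{\lambda^{2}}(1/2)^{-1}\le U(\lambda)\le\lambda^{-1/2}\sqrt{\pi/2}\,(1+\lambda)^{1/2}\Gamma_{\lambda^{2}}(1/2)^{-1}$, which is~(\ref{eq:-29}). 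As $\lambda\to1-$, Lemma~\ref{lem:-3} gives $\Gamma_{\lambda^{2}}(1/2)\to\Gamma(1/2)=\sqrt{\pi}$ and $\lambda^{-1/2}(1+\lambda)^{1/2}\to\sqrt{2}$, so both bounds tend to $1$, giving~(\ref{eq:-30}).

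For part~(ii) one needs $\Gamma_{\lambda^{2}}(1/2)$ to the precision of the correction factor in~(\ref{eq:-67}). From Definition~\ref{def:-1}, splitting the half-integer powers of $q$ into integers and odd half-integers gives $\prod_{n\ge0}(1-q^{n+1/2})=\big(\prod_{m\ge1}(1-q^{m/2})\big)/\big(\prod_{n\ge1}(1-q^{n})\big)$, so $\Gamma_{q}(1/2)=(1-q)^{1/2}\big(\prod_{n\ge1}(1-q^{n})\big)^{2}/\prod_{m\ge1}(1-q^{m/2})$. Applying the modular (Dedekind $\eta$) transformation, equivalently Poisson summation, to $\prod_{n\ge1}(1-q^{n})$ at nome $q$ and to $\prod_{m\ge1}(1-q^{m/2})$ at nome $q^{1/2}$: the $\exp(-\pi^{2}/(6\log q^{-1}))$-type exponentials cancel, the rational prefactors combine to $q^{-1/16}$, and the residual dual products collapse, via the Jacobi triple product $\prod_{n\ge1}(1-y^{2n-1})^{2}(1-y^{2n})=\sum_{k\in\mathbb{Z}}(-1)^{k}y^{k^{2}}$, into a theta series. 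With $q=\lambda^{2}$ this yields $\Gamma_{\lambda^{2}}(1/2)=(1-\lambda^{2})^{1/2}\lambda^{-1/8}\sqrt{\pi/(2\log\lambda^{-1})}\,\Theta(\lambda)$, where $\Theta(\lambda)=\sum_{k\in\mathbb{Z}}(-1)^{k}e^{-2\pi^{2}k^{2}/\log\lambda^{-1}}$, and the alternating-series estimate gives $1-2e^{-2\pi^{2}/\log\lambda^{-1}}\le\Theta(\lambda)\le1$.

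Now sharpen the pairing for $\lambda\in(0.7,1)$. For the upper bound, AM--GM gives $(1+t^{2}\lambda^{2k})(1+t^{2}\lambda^{2k+1})\ge(1+t^{2}\lambda^{2k+1/2})^{2}$, so $\prod_{k}(1+t^{2}\lambda^{k})^{-1/2}\le\prod_{k}(1+t^{2}\lambda^{2k+1/2})^{-1}$; the substitution $t\mapsto t\lambda^{-1/4}$ with~(\ref{eq:-19}) gives $U(\lambda)\le\lambda^{-1/4}\sqrt{\pi/2}\,(1+\lambda)^{1/2}\Gamma_{\lambda^{2}}(1/2)^{-1}$, and inserting the lower bound $\Gamma_{\lambda^{2}}(1/2)\ge(1-\lambda^{2})^{1/2}\lambda^{-1/8}\sqrt{\pi/(2\log\lambda^{-1})}(1-2e^{-2\pi^{2}/\log\lambda^{-1}})$ (using $\lambda^{-1/4+1/8}=\lambda^{-1/8}$, $(1+\lambda)^{1/2}(1-\lambda^{2})^{-1/2}=(1-\lambda)^{-1/2}$, and $\frac{\lambda^{-1}\log\lambda^{-1}}{\lambda^{-1}-1}=\frac{\log\lambda^{-1}}{1-\lambda}$) produces exactly the upper bound in~(\ref{eq:-67}). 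For the lower bound, pick $\alpha\in(0,1/2]$ with $(1+t^{2}\lambda^{2k})(1+t^{2}\lambda^{2k+1})\le(1+t^{2}\lambda^{2k+\alpha})^{2}$; comparing the coefficients of $t^{4}$ and of $t^{2}$, this reduces to $\alpha\le1/2$ and $1+\lambda\le2\lambda^{\alpha}$, i.e.\ $\alpha\le[\log2-\log(1+\lambda)]/\log\lambda^{-1}$, which exceeds $0.45$ on $(0.7,1)$; taking $\alpha=0.45$, one gets $\prod_{k}(1+t^{2}\lambda^{k})^{-1/2}\ge\prod_{k}(1+t^{2}\lambda^{2k+\alpha})^{-1}$, and the substitution $t\mapsto t\lambda^{-\alpha/2}$ with~(\ref{eq:-19}) gives $U(\lambda)\ge\lambda^{-\alpha/2}\sqrt{\pi/2}\,(1+\lambda)^{1/2}\Gamma_{\lambda^{2}}(1/2)^{-1}$; combined with $\Theta(\lambda)\le1$ this produces the exponent $-\alpha/2+1/8=-0.1$, which is the lower bound in~(\ref{eq:-67}).

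The main obstacle is the asymptotic for $\Gamma_{\lambda^{2}}(1/2)$: one must track the $q^{1/24}$-type prefactors through the two $\eta$-transformations (at nomes $q$ and $q^{1/2}$) accurately enough to pin down the exact power $\lambda^{-1/8}$, the exact dual nome $e^{-2\pi^{2}/\log\lambda^{-1}}$, and the exact $\sqrt{\pi/(2\log\lambda^{-1})}$ scaling, and to recognise the leftover product as the theta series via Jacobi's identity. Everything else---the pairing inequalities, the scaling substitutions, and the invocations of Lemmas~\ref{lem:-6} and~\ref{lem:-3}---is elementary.
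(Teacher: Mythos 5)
Your proposal is correct and, for the sandwich part, is exactly the paper's argument: the paper itself only says part (i) and the refined bounds \eqref{eq:-73} follow ``by an argument similar to the proof of Lemma \ref{lem:-4}'', and your pairing $(1+t^{2}\lambda^{2k})(1+t^{2}\lambda^{2k+1})$, the AM--GM upper comparison with exponent $2k+1/2$, the coefficient-matching lower comparison with $\alpha=0.45$ on $(0.7,1)$, and the rescaling substitutions feeding into \eqref{eq:-19} are precisely what is meant. The one place you genuinely diverge is the asymptotic for $\Gamma_{\lambda^{2}}(1/2)$: the paper converts the product directly into the sum $\sum_{n\ge0}\lambda^{n(n+1)/2}$ via Gauss's identity, completes the square to get $\lambda^{-1/8}$ times a Riemann sum $\sum_{n}e^{-(n+1/2)^{2}h^{2}/2}h$, and applies Poisson summation once; you instead route through two Dedekind-eta modular transformations and the Jacobi triple product. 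Both paths land on the identical identity $\Gamma_{\lambda^{2}}(1/2)=\lambda^{-1/8}\sqrt{\pi(1-\lambda^{2})/(2\log\lambda^{-1})}\,\Theta(\lambda)$ with $\Theta(\lambda)=\sum_{k\in\mathbb{Z}}(-1)^{k}e^{-2\pi^{2}k^{2}/\log\lambda^{-1}}$ and the same alternating-series bracketing, so your final bounds agree with \eqref{eq:-72}; the paper's single-Poisson route is shorter and avoids having to track the $q^{1/24}$-prefactors through two transformations, which is the part of your argument you leave as a sketch and which would need to be written out to be complete. One further remark in your favour: your derivation gives the upper bound $\lambda^{-1/2}\sqrt{\pi/2}\,(1+\lambda)^{1/2}/\Gamma_{\lambda^{2}}(1/2)$ in part (i), whereas \eqref{eq:-29} as printed omits the factor $\sqrt{\pi/2}$; since the printed upper bound tends to $\sqrt{2/\pi}<1$ while the lower bound tends to $1$, the statement as printed is inconsistent, and your version (which matches the normalisation in \eqref{eq:-73}) is the correct one.
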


\begin{proof}
(i) The inequality (\ref{eq:-29}) can be obtained by an argument
similar to the proof of Lemma \ref{lem:-4}. The limit (\ref{eq:-30})
follows from (\ref{eq:-29}) and Lemma \ref{lem:-3}. 

(ii) For $\lambda\in(0.7,1)$, applying (\ref{eq:-145}) and (\ref{eq:-13})
to $t^{2}$ gives
\begin{equation}
(1+t^{2}\lambda^{2k+1/2})^{2}\le(1+t^{2}\lambda^{2k})(1+t^{2}\lambda^{2k+1})\le(1+t^{2}\lambda^{2k+\alpha})^{2},\label{eq:-147}
\end{equation}
where $\alpha=0.45$. Therefore, by
\[
\prod_{k=0}^{\infty}(1+t^{2}\lambda^{k})^{-1/2}=\prod_{k=0}^{\infty}[(1+t^{2}\lambda^{2k})(1+t^{2}\lambda^{2k+1})]^{-1/2},
\]
(\ref{eq:-147}) and (\ref{eq:-16}), we deduce that
\begin{equation}
\int_{0}^{\infty}\prod_{k=0}^{\infty}(1+t^{2}\lambda^{2k+\alpha})^{-1}dt\le\sqrt{\frac{\pi(1-\lambda)}{2}}U(\lambda)\le\int_{0}^{\infty}\prod_{k=0}^{\infty}(1+t^{2}\lambda^{2k+1/2})^{-1}dt.\label{eq:-148}
\end{equation}
By simple changes of variables, (\ref{eq:-148}) can be written as
\[
\lambda^{-\alpha/2}\int_{0}^{\infty}\prod_{k=0}^{\infty}(1+t^{2}\lambda^{2k})^{-1}dt\le\sqrt{\frac{\pi(1-\lambda)}{2}}U(\lambda)\le\lambda^{-1/2}\int_{0}^{\infty}\prod_{k=0}^{\infty}(1+t^{2}\lambda^{2k})^{-1}dt.
\]
It follows from the above and Lemma \ref{lem:-6} that
\begin{equation}
\lambda^{-\alpha/2}\sqrt{\frac{\pi}{2}}\frac{(1+\lambda)^{1/2}}{\Gamma_{\lambda^{2}}(1/2)}\le U(\lambda)\le\lambda^{-1/4}\sqrt{\frac{\pi}{2}}\frac{(1+\lambda)^{1/2}}{\Gamma_{\lambda^{2}}(1/2)},\label{eq:-73}
\end{equation}

We now derive upper and lower bounds of $\Gamma_{\lambda^{2}}(1/2)$.
By the following Gauss's formula (see \cite[Section 10.9, page 523]{AAR99}),
\[
\sum_{n=0}^{\infty}\lambda^{n(n+1)/2}=\prod_{n=1}^{\infty}\frac{1-\lambda^{2n}}{1-\lambda^{2n-1}}.
\]
Denote $h=\sqrt{\log\lambda^{-1}}$. Then
\begin{equation}
\begin{aligned}\Gamma_{\lambda^{2}}\Big(\frac{1}{2}\Big) & =(1-\lambda^{2})^{1/2}\prod_{n=1}^{\infty}\frac{1-\lambda^{2n}}{1-\lambda^{2n-1}}=\frac{(1-\lambda^{2})^{1/2}}{h}\sum_{n=0}^{\infty}e^{-n(n+1)h^{2}/2}h\\
 & =e^{h^{2}/8}\sqrt{\frac{1-\lambda^{2}}{\log\lambda^{-1}}}\sum_{n=0}^{\infty}e^{-(n+1/2)^{2}h^{2}/2}h=\frac{e^{h^{2}/8}}{2}\sqrt{\frac{1-\lambda^{2}}{\log\lambda^{-1}}}\sum_{n=-\infty}^{\infty}e^{-(n+1/2)^{2}h^{2}/2}h.
\end{aligned}
\label{eq:-69}
\end{equation}
Note that $\sum_{n=-\infty}^{\infty}e^{-(n+1/2)^{2}h^{2}/2}h$ is
a Riemann sum converging to $\sqrt{\pi/2}$. To derive bounds of this
Riemann sum for small $h>0$, let $g(x)=e^{-h^{2}x^{2}/2}$. The Fourier
transform of $g$ is given by
\[
\hat{g}(\xi)=\int_{-\infty}^{\infty}g(x)e^{-\text{i}2\pi\xi x}dx=\sqrt{2\pi}h^{-1}e^{-2\pi^{2}\xi^{2}/h^{2}}.
\]
By the Poisson summation formula (see \cite[page 252]{SW71}), we
have
\begin{equation}
\sum_{n=-\infty}^{\infty}g(n+x)=\sum_{n=-\infty}^{\infty}\hat{g}(n)e^{\text{i}2\pi nx},\quad x\in\mathbb{R}.\label{eq:-76}
\end{equation}
Setting $x=1/2$ in (\ref{eq:-76}) gives that
\begin{align*}
\sum_{n=-\infty}^{\infty}e^{-(n+1/2)^{2}h^{2}/2}h=\sqrt{2\pi}\sum_{n=-\infty}^{\infty}e^{-2\pi^{2}n^{2}/h^{2}}e^{\text{i}\pi n}=\sqrt{2\pi}\Big(1+2\sum_{n=1}^{\infty}(-1)^{n}e^{-2\pi^{2}n^{2}/h^{2}}\Big),
\end{align*}
which implies that
\begin{equation}
\sqrt{2\pi}(1-2e^{-2\pi^{2}/h^{2}})\le\sum_{n=-\infty}^{\infty}e^{-(n+1/2)^{2}h^{2}/2}h\le\sqrt{2\pi}.\label{eq:-71}
\end{equation}
Combining (\ref{eq:-69}) and (\ref{eq:-71}) and $h=\sqrt{\log\lambda^{-1}}$
gives that
\begin{equation}
\lambda^{-1/8}\sqrt{\frac{\pi(1-\lambda^{2})}{2\log\lambda^{-1}}}(1-2e^{-2\pi^{2}/\log(\lambda^{-1})})\le\Gamma_{\lambda^{2}}\Big(\frac{1}{2}\Big)\le\lambda^{-1/8}\sqrt{\frac{\pi(1-\lambda^{2})}{2\log\lambda^{-1}}}.\label{eq:-72}
\end{equation}
The estimate (\ref{eq:-67}) follows from (\ref{eq:-73}) and (\ref{eq:-72})
readily.
\end{proof}

\section{\label{sec:-6}Limits of $\text{Var}[\log(I_{T})]$}

In this section, we study the behavior of $\text{Var}[\log(I_{T})]$
when $(N,\lambda)$ approaches $(\infty,1)$ in different manners
for the case when $\sigma(t)=\sigma$ is a constant. Proposition \ref{prop:-3}
below shows that the simultaneous limit $\lim_{N\to\infty,\lambda\to1-}\text{Var}[\log(I_{T})]$
exists if and only if the initial variance $v_{0}$ is equal to $\sigma^{2}$.
Lemma \ref{lem:-8} shows that, when $v_{0}\not=\sigma^{2}$, there
exist paths $\lambda=\lambda_{N}$ along which $(N,\lambda)$ approaches
$(\infty,1)$ and $\text{Var}[\log(I_{T})]$ converges to a limit
different from the desired $\bar{\sigma}^{2}T$. This implies the
fact that when $\lambda\to1$, $\Delta t$ has to converge to $0$
faster than $1-\lambda$, in order to achieve 
\[
\bigg|\sqrt{\frac{1}{T}\text{Var}[\log(I_{T})]}-\bar{\sigma}\bigg|<\epsilon
\]
for a preset volatility targeting tolerance $\epsilon$>0. Moreover,
Lemma \ref{lem:-9} gives a sufficient condition on the path $\lambda=\lambda_{N}$
such that $\text{Var}[\log(I_{T})]$ indeed converges to $\bar{\sigma}^{2}T$.

For the rest of this section, we will denote $I_{T}$ by $I_{T}^{(N,\lambda)}$
to indicate its dependence on $N$ and $\lambda$.
\begin{prop}
\label{prop:-3}(i) When $v_{0}\not=\sigma^{2}$, the order of limits
are not interchangeable for $\mathrm{Var}[\log(I_{T}^{(N,\lambda)})]$,
i.e.
\begin{equation}
\lim_{N\to\infty}\lim_{\lambda\to1-}\mathrm{Var}[\log(I_{T}^{(N,\lambda)})]\not=\lim_{\lambda\to1-}\lim_{N\to\infty}\mathrm{Var}[\log(I_{T}^{(N,\lambda)})].\label{eq:-107}
\end{equation}

(ii) When $v_{0}=\sigma^{2}$, the simultaneous limit $\lim_{N\to\infty,\lambda\to1-}\mathrm{Var}[\log(I_{T}^{(N,\lambda)})]$
exists and
\begin{equation}
\lim_{N\to\infty,\lambda\to1-}\frac{1}{\bar{\sigma}^{2}T}\mathrm{Var}[\log(I_{T}^{(N,\lambda)})]=1.\label{eq:-113}
\end{equation}
Therefore, the simultaneous limit $\lim_{N\to\infty,\lambda\to1-}\mathrm{Var}[\log(I_{T}^{(N,\lambda)})]$
exists if and only if $v_{0}=\sigma^{2}$.
\end{prop}

\begin{proof}
(i) Denote $V_{N,\lambda}=\text{Var}[\log(I_{T}^{(N,\lambda)})]/(\bar{\sigma}^{2}T)$.
By (\ref{eq:-45}), it is easily seen that
\begin{equation}
V_{N,\lambda}=\frac{1}{N}\sum_{k=0}^{N-1}\int_{0}^{\infty}e^{-v_{0}\sigma^{-2}\lambda^{k}t}\prod_{j=0}^{k-1}(1+2t(1-\lambda)\lambda^{j})^{-1/2}dt.\label{eq:-108}
\end{equation}
By the dominated convergence theorem,
\[
\lim_{\lambda\to1-}V_{N,\lambda}=\frac{1}{N}\sum_{k=0}^{N-1}\int_{0}^{\infty}e^{-v_{0}\sigma^{-2}t}dt=\frac{\sigma^{2}}{v_{0}}.
\]
Therefore, 
\begin{equation}
\lim_{N\to\infty}\lim_{\lambda\to1-}V_{N,\lambda}=\frac{\sigma^{2}}{v_{0}}.\label{eq:-62}
\end{equation}
On the other hand, recall that $\lim_{N\to\infty}V_{N,\lambda}=V(\lambda)$.
Therefore, by (\ref{eq:-38}),
\begin{equation}
\lim_{\lambda\to1-}\lim_{N\to\infty}V_{N,\lambda}=\lim_{\lambda\to1-}V(\lambda)=1.\label{eq:-65}
\end{equation}
The non-interchangeability (\ref{eq:-107}) for $v_{0}\not=\sigma^{2}$
follows from (\ref{eq:-62}) and (\ref{eq:-65}) readily.

(ii) Suppose that $v_{0}=\sigma^{2}$. We need to show that $\lim_{N\to\infty,\lambda\to1-}V_{N,\lambda}=1$.
By (\ref{eq:-108}),
\begin{equation}
\begin{aligned}V_{N,\lambda} & \ge\frac{1}{N}\sum_{k=0}^{N-1}\int_{0}^{\infty}e^{-\lambda^{k}t}\prod_{j=0}^{k-1}e^{-(1-\lambda)\lambda^{j}t}dt=\frac{1}{N}\sum_{k=0}^{N-1}\int_{0}^{\infty}e^{-t}dt=1.\end{aligned}
\label{eq:-116}
\end{equation}
Hence, $\liminf_{N\to\infty,\lambda\to1-}V_{N,\lambda}\ge1$. Conversely,
for any $a>1$ and $\lambda>1-(2a)^{-1}$, we have $2a(1-\lambda)\lambda^{j}<1$
and therefore,
\[
(1+a^{-1}t)^{2a(1-\lambda)\lambda^{j}}<1+2t(1-\lambda)\lambda^{j},\quad t>0.
\]
Moreover,
\[
\begin{aligned}V_{N,\lambda} & \le\frac{1}{N}\sum_{k=0}^{N-1}\int_{0}^{\infty}e^{-\lambda^{k}t}\prod_{j=0}^{k-1}(1+a^{-1}t)^{-a(1-\lambda)\lambda^{j}}dt\\
 & =\frac{1}{N}\sum_{k=0}^{N-1}\int_{0}^{\infty}e^{-\lambda^{k}t}(1+a^{-1}t)^{-a(1-\lambda^{k})}dt\\
 & =\frac{1}{N}\sum_{k=0}^{N-1}\int_{0}^{\infty}\Big(\frac{1+a^{-1}t}{e^{a^{-1}t}}\Big)^{a\lambda^{k}}(1+a^{-1}t)^{-a}dt\\
 & \le\frac{1}{N}\sum_{k=0}^{N-1}\int_{0}^{\infty}(1+a^{-1}t)^{-a}dt=\frac{a}{a-1}.
\end{aligned}
\]
This implies that $\limsup_{N\to\infty,\lambda\to1-}V_{N,\lambda}\le\frac{a}{a-1}$
for any $a>1$. Setting $a\to\infty$ gives $\limsup_{N\to\infty,\lambda\to1-}V_{N,\lambda}\le1$.
This completes the proof of (\ref{eq:-113}).
\end{proof}
\begin{lem}
\label{lem:-8}Let $\{\lambda_{N}\}$ be a sequence in $(0,1)$ such
that 
\begin{equation}
\lim_{N\to\infty}N(1-\lambda_{N})=0.\label{eq:-117}
\end{equation}
Then
\begin{equation}
\lim_{N\to\infty}\frac{1}{\bar{\sigma}^{2}T}\mathrm{Var}[\log I_{T}^{(N,\lambda_{N})}]=\frac{\sigma^{2}}{v_{0}}.\label{eq:-149}
\end{equation}
Therefore, when $v_{0}\not=\sigma^{2}$, in order to have $\mathrm{Var}[\log I_{T}^{(N,\lambda_{N})}]\to\bar{\sigma}^{2}T$,
the rebalancing time step $\Delta t$ has to converge to $0$ faster
than $1-\lambda$ as $\lambda\to1$.
\end{lem}

\begin{proof}
By (\ref{eq:-108}),
\[
\begin{aligned}V_{N,\lambda} & \le\frac{1}{N}\sum_{k=0}^{N-1}\int_{0}^{\infty}e^{-v_{0}\sigma^{-2}\lambda^{k}t}dt=\frac{1}{N}\sum_{k=0}^{N-1}\frac{\sigma^{2}}{v_{0}\lambda^{k}}=\frac{\sigma^{2}(\lambda^{-N}-1)}{v_{0}N(\lambda^{-1}-1)}.\end{aligned}
\]
It follows from (\ref{eq:-117}) that $N\log(\lambda_{N}^{-1})\to0$
and 
\[
\lim_{N\to\infty}\frac{\lambda_{N}^{-N}-1}{N(\lambda_{N}^{-1}-1)}=\lim_{N\to\infty}\dfrac{e^{N\log(\lambda_{N}^{-1})}-1}{N\log(\lambda_{N}^{-1})}\cdot\frac{\log(\lambda_{N}^{-1})}{\lambda_{N}^{-1}-1}=1.
\]
Therefore, 
\[
\limsup_{N\to\infty}V_{N,\lambda_{N}}\le\frac{\sigma^{2}}{v_{0}}.
\]
It remains to show that 
\begin{equation}
\liminf_{N\to\infty}V_{N,\lambda_{N}}\ge\frac{\sigma^{2}}{v_{0}}.\label{eq:-119}
\end{equation}
Clearly,
\begin{equation}
\begin{aligned}V_{N,\lambda} & \ge\frac{1}{N}\sum_{k=0}^{N-1}\int_{0}^{\infty}e^{-v_{0}\sigma^{-2}\lambda^{k}t}\prod_{j=0}^{k-1}e^{-t(1-\lambda)\lambda^{j}}dt=\frac{1}{N}\sum_{k=0}^{N-1}\frac{1}{1+(v_{0}\sigma^{-2}-1)\lambda^{k}}.\end{aligned}
\label{eq:-118}
\end{equation}
When $v_{0}\ge\sigma^{2}$, (\ref{eq:-119}) is an immediate consequence
of (\ref{eq:-118}) and $1/[1+(v_{0}\sigma^{-2}-1)\lambda^{k}]\ge1/[1+(v_{0}\sigma^{-2}-1)]=\sigma^{2}/v_{0}$.
When $v_{0}<\sigma^{2}$, it follows from (\ref{eq:-118}) that
\begin{equation}
\begin{aligned}V_{N,\lambda} & \ge\frac{1}{1+(v_{0}\sigma^{-2}-1)\lambda^{N}}\end{aligned}
.\label{eq:-120}
\end{equation}
By (\ref{eq:-117}), $\lim_{N\to\infty}\lambda_{N}^{N}=\lim_{N\to\infty}e^{N\log(\lambda_{N})}=1$,
which together with (\ref{eq:-120}) implies (\ref{eq:-119}). This
completes the proof.
\end{proof}
\begin{rem}
\label{rem:-3}(i) An example of paths $\lambda=\lambda_{N}$ satisfying
(\ref{eq:-117}) is $\lambda_{N}=1-N^{-2}$.

(ii) We would like to point out that in practice, the volatility $\sigma$
cannot be exactly measured, and consequently, the condition $v_{0}=\sigma^{2}$
cannot be satisfied in general. Therefore, for the general cases,
$\Delta t$ is required to converge to $0$ faster than $1-\lambda$
in order to achieve $\mathrm{Var}[\log I_{T}^{(N,\lambda_{N})}]\to\bar{\sigma}^{2}T$.
\end{rem}

\begin{lem}
\label{lem:-9}Let $\{\lambda_{N}\}$ be a sequence in $(0,1)$ such
that 
\begin{equation}
\lim_{N\to\infty}N^{\gamma}(1-\lambda_{N})=\infty,\label{eq:-128}
\end{equation}
for some $0<\gamma\le1/2$. Then
\begin{equation}
\lim_{N\to\infty}\frac{1}{\bar{\sigma}^{2}T}\mathrm{Var}[\log(I_{T}^{(N,\lambda)})]=1.\label{eq:-146}
\end{equation}
\end{lem}

\begin{proof}
We first show that $\liminf_{N\to\infty}V_{N,\lambda_{N}}\ge1$. When
$v_{0}\le\sigma^{2}$, it follows immediately from (\ref{eq:-118})
that $V_{N,\lambda}\ge1$, which implies $\liminf_{N\to\infty}V_{N,\lambda_{N}}\ge1$.
Suppose that $v_{0}>\sigma^{2}$. By (\ref{eq:-118}),
\[
\begin{aligned}V_{N,\lambda} & \ge\frac{1}{N}\Big(\sum_{0\le k<N^{1-\gamma}}\frac{1}{1+(v_{0}\sigma^{-2}-1)\lambda^{k}}+\sum_{N^{1-\gamma}\le k<N}\frac{1}{1+(v_{0}\sigma^{-2}-1)\lambda^{k}}\Big)\\
 & \ge\frac{\lfloor N^{1-\gamma}\rfloor}{N}\frac{\sigma^{2}}{v_{0}}+\frac{\lfloor N(1-N^{-\gamma})\rfloor}{N}\frac{1}{1+(v_{0}\sigma^{-2}-1)\lambda^{N^{1-\gamma}}}.
\end{aligned}
\]
By (\ref{eq:-128}) and $\gamma\le1/2$, 
\begin{equation}
\lim_{N\to\infty}\lambda_{N}^{N^{1-\gamma}}=\lim_{N\to\infty}e^{-N^{1-\gamma}\log(\lambda_{N}^{-1})}=0.\label{eq:-131}
\end{equation}
Therefore,
\[
\liminf_{N\to\infty}V_{N,\lambda_{N}}\ge\lim_{N\to\infty}\frac{\lfloor N^{1-\gamma}\rfloor}{N}+\lim_{N\to\infty}\frac{\lfloor N(1-N^{-\gamma})\rfloor}{N}=1.
\]
It remains to prove that $\limsup_{N\to\infty}V_{N,\lambda_{N}}\le1$.
Let $m=\lfloor N/2\rfloor+1$ and 
\[
A_{\lambda}=\sum_{0\le k\le3}\int_{0}^{\infty}e^{-v_{0}\sigma^{-2}\lambda^{k}t}dt=\frac{\sigma^{2}}{v_{0}}\sum_{0\le k\le3}\lambda^{-k}.
\]
Then
\begin{equation}
\begin{aligned}V_{N,\lambda} & \le\frac{1}{2(m-1)}\Big(A_{\lambda}+\sum_{k=4}^{2m-1}\int_{0}^{\infty}\prod_{j=0}^{k-1}(1+2t(1-\lambda)\lambda^{j})^{-1/2}dt\Big)\\
 & \le\frac{1}{2(m-1)}\Big[A_{\lambda}+\sum_{k=2}^{m-1}\int_{0}^{\infty}\Big(\prod_{j=0}^{2k-1}+\prod_{j=0}^{2k}\Big)(1+2t(1-\lambda)\lambda^{j})^{-1/2}dt\Big]\\
 & \le\frac{1}{m-1}\Big(\frac{A_{\lambda}}{2}+\sum_{k=2}^{m-1}\int_{0}^{\infty}\prod_{j=0}^{2k-1}(1+2t(1-\lambda)\lambda^{j})^{-1/2}dt\Big)\\
 & \le\frac{1}{m-1}\Big(\frac{A_{\lambda}}{2}+\sum_{k=2}^{m-1}\int_{0}^{\infty}\prod_{j=0}^{k-1}(1+2t(1-\lambda)\lambda^{2j+1})^{-1}dt\Big)\\
 & =\frac{1}{m-1}\Big(\frac{A_{\lambda}}{2}+\frac{1}{2\lambda(1-\lambda)}\sum_{k=2}^{m-1}\int_{0}^{\infty}\prod_{j=0}^{k-1}(1+t\lambda^{2j})^{-1}dt\Big),
\end{aligned}
\label{eq:-129}
\end{equation}
where the last inequality follows from the identity
\[
\prod_{j=0}^{2k-1}(1+2t(1-\lambda)\lambda^{j})^{-1/2}=\prod_{j=0}^{k-1}\big[(1+2t(1-\lambda)\lambda^{2j})^{-1/2}(1+2t(1-\lambda)\lambda^{2j+1})^{-1/2}\big].
\]
By Lemma \ref{lem:-2},
\begin{equation}
\int_{0}^{\infty}\prod_{j=0}^{k-1}(1+t\lambda^{2j})^{-1}dt=\frac{2\log(\lambda^{-1})}{1-\lambda^{2(k-1)}}\le\frac{2\log(\lambda^{-1})}{1-\lambda^{k-1}}.\label{eq:-130}
\end{equation}
For any $0<\epsilon<1$, by (\ref{eq:-129}) and (\ref{eq:-130}),
\begin{equation}
\begin{aligned}V_{N,\lambda} & \le\frac{1}{m-1}\Big(\frac{A_{\lambda}}{2}+\frac{\log(\lambda^{-1})}{\lambda(1-\lambda)}\sum_{k=2}^{m-1}\frac{1}{1-\lambda^{k-1}}\Big).\\
 & =\frac{1}{m-1}\Big[\frac{A_{\lambda}}{2}+\frac{\log(\lambda^{-1})}{\lambda(1-\lambda)}\Big(\sum_{1\le k<m^{1-\gamma}}\frac{1}{1-\lambda^{k}}+\sum_{m^{1-\gamma}\le k<m-1}\frac{1}{1-\lambda^{k}}\Big)\Big]\\
 & \le\frac{1}{m-1}\Big[\frac{A_{\lambda}}{2}+\frac{\log(\lambda^{-1})}{\lambda(1-\lambda)}\Big(\frac{m^{1-\gamma}}{1-\lambda}+\frac{m(1-m^{-\gamma})}{1-\lambda^{m^{1-\gamma}}}\Big)\Big]\\
 & \le\frac{A_{\lambda}}{N-2}+\frac{N/2+1}{N/2-1}\cdot\frac{\log(\lambda^{-1})}{\lambda(1-\lambda)}\Big(\frac{1}{(N/2)^{\gamma}(1-\lambda)}+\frac{1-(N/2)^{-\gamma}}{1-\lambda^{(N/2)^{1-\gamma}}}\Big).
\end{aligned}
\label{eq:-132}
\end{equation}
Clearly, $\lim_{N\to\infty}A_{\lambda_{N}}/(N-2)=0$. By (\ref{eq:-128})
and (\ref{eq:-131}),
\[
\lim_{N\to\infty}\Big(\frac{1}{(N/2)^{\gamma}(1-\lambda_{N})}+\frac{1-(N/2)^{-\gamma}}{1-\lambda_{N}^{(N/2)^{1-\gamma}}}\Big)=1,
\]
which together with (\ref{eq:-132}) implies $\limsup_{N\to\infty}V_{N,\lambda_{N}}\le1$.
This completes the proof.
\end{proof}
\begin{rem}
\label{rem:-4}(i) An example of $\lambda=\lambda_{N}$ satisfying
(\ref{eq:-128}) is $\lambda_{N}=1-N^{-1/2}\log N$.

(ii) The conclusion of Lemma \ref{lem:-9} remains true when $\sigma(t)$
is time dependent. We briefly describe the strategy to see this. Note
that the specific value of $\sigma$ was not used in the proof of
Lemma \ref{lem:-9}. One may choose a sequence of piece-wise constant
$\{\sigma_{n}(t)\}$ such that $\Vert\sigma_{n}-\sigma\Vert_{L^{2}}\to0$.
The result of Lemma \ref{lem:-9} holds for each $\sigma_{n}(t)$
by a conditioning argument. Setting $n\to\infty$ in (\ref{eq:-146})
for $\sigma_{n}$ gives the result for time dependent $\sigma(t)$.
\end{rem}

\section{\label{sec:-2}Numerical tests}

In this section, we present some numerical test results which verify
the conclusions of Theorem \ref{thm:-3} and Proposition \ref{prop:-1}.
For simulation of the underlying risky asset paths, the Euler scheme
with discretisation time step equal to $\Delta t$ is applied to the
process $\log S_{t}$, that is, $\log S_{t+\Delta t}=\log S_{t}+(\rho-\sigma^{2}/2)\Delta t+\sigma\sqrt{\Delta t}Z$
with $Z$ being samples of the standard normal distribution. The C++
code for all numerical tests in this section are available in the
open source library ``\texttt{cltvt}''\footnote{Source code available from \href{https://github.com/liuxuan1111/cltvt.git}{https://github.com/liuxuan1111/cltvt.git}}.

Figure \ref{fig:-5} compares the function $U(\lambda)$ and the bounds
in (\ref{eq:-70}), where the blue curves refer to $U(\lambda)$ and
the two dashed red curves refer to the corresponding upper and lower
bounds. Figure \ref{fig:} presents a similar comparison for the function
$V(\lambda)$ and the bounds in (\ref{eq:-38}). These figures show
that the bounds in (\ref{eq:-70}) and (\ref{eq:-38}) are good approximations
to the functions $U(\lambda)$ and $V(\lambda)$.

\begin{figure}[H]
\centering{}\includegraphics[scale=0.5]{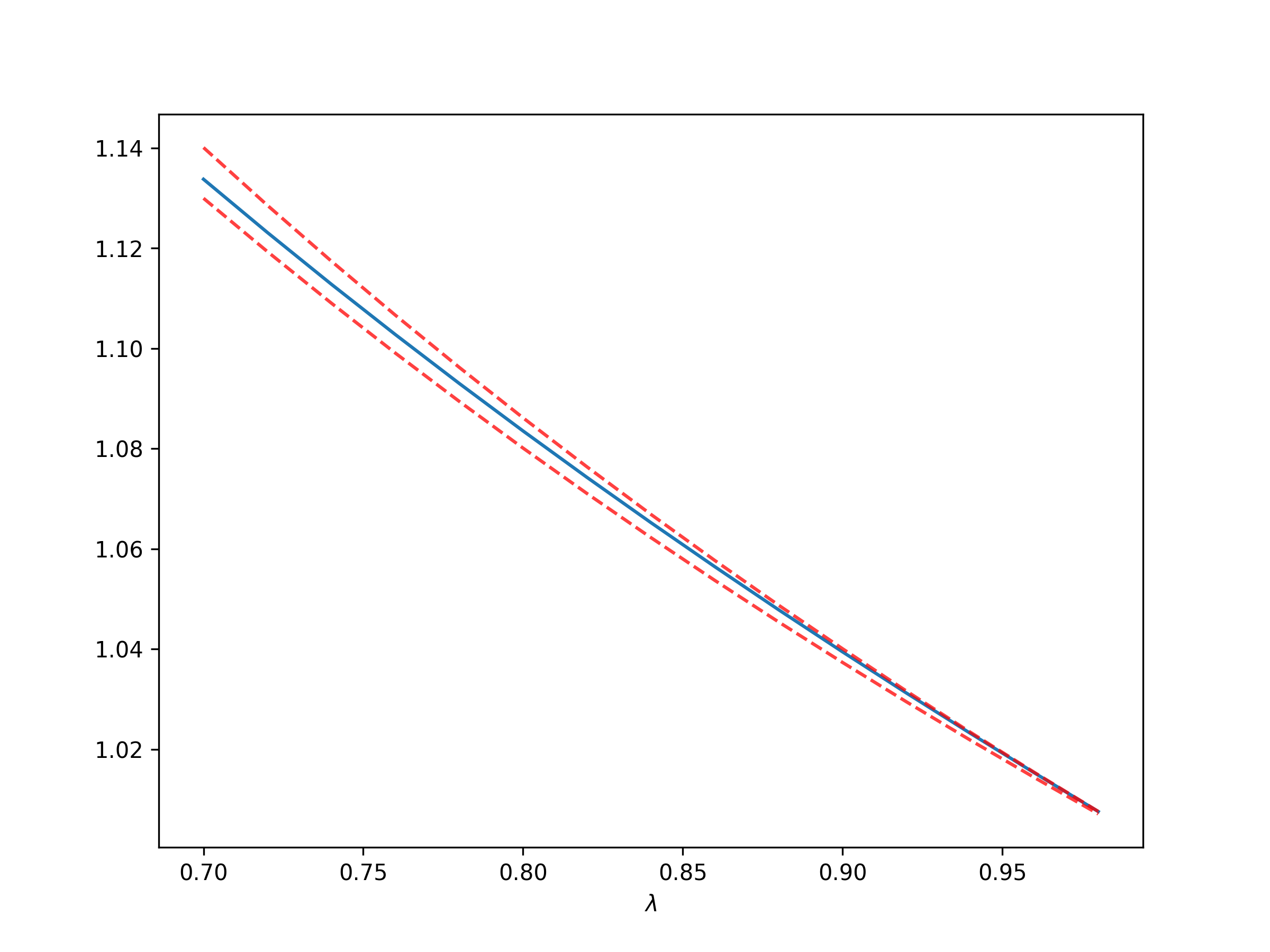}\caption{Approximation of $U(\lambda)$.}
\label{fig:-5}
\end{figure}

\begin{figure}[H]
\centering{}\includegraphics[scale=0.5]{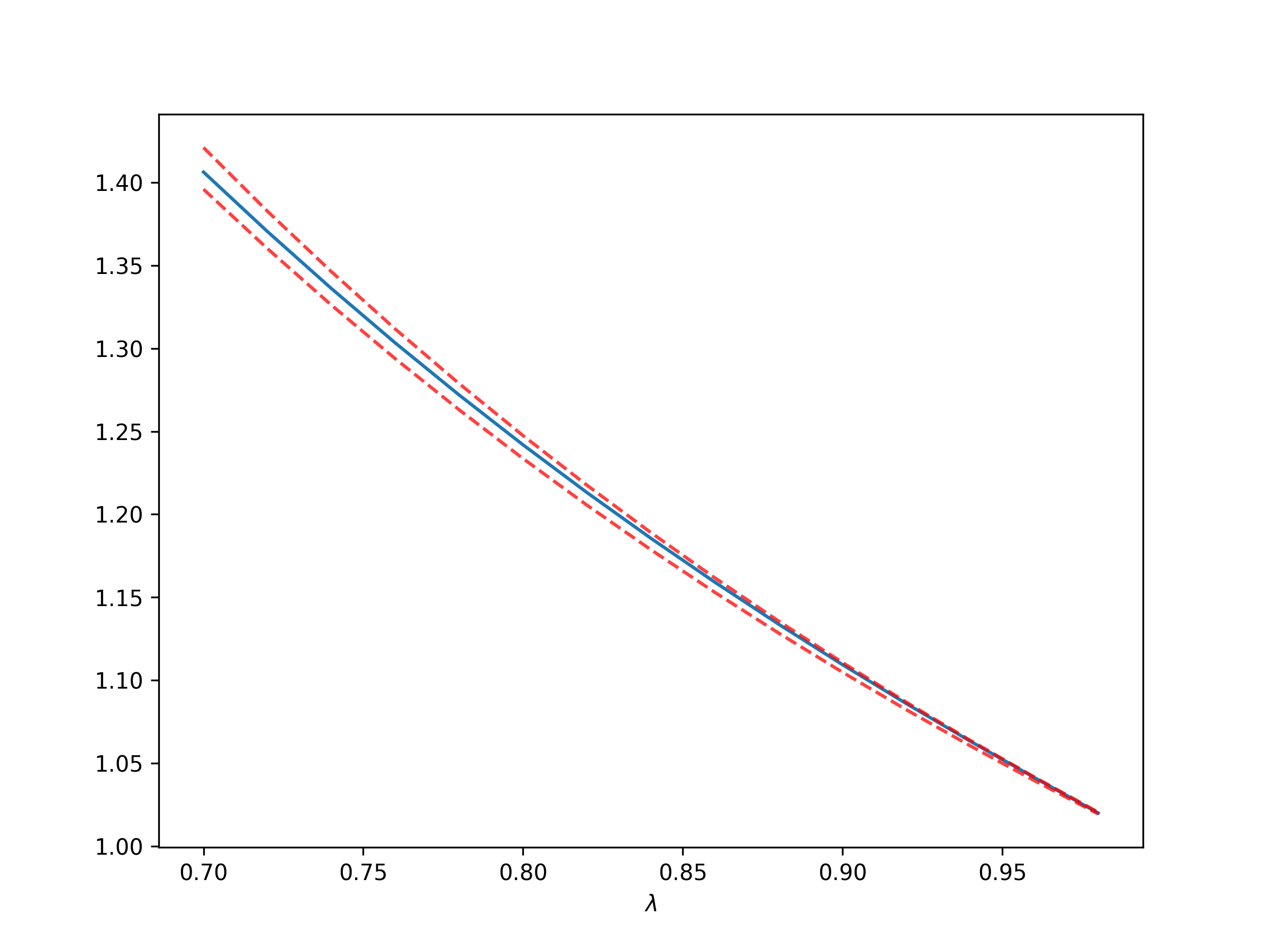}\caption{Approximation of $V(\lambda)$.}
\label{fig:}
\end{figure}

Figure \ref{fig:-1} shows the sample probability density function
of $\log(I_{T})$ and the density function of $\log(X_{T})$. The
parameters we use are $T=1.0$, $\lambda=0.9$, $\sigma=0.5$, $v_{0}=0.02$,
$r=0.05$, $\rho=0.03$, $S_{0}=I_{0}=1$, and $\bar{\sigma}=0.2$.
For Monte Carlo simulation of $I_{T}$, we use $2000$ time steps
for rebalancing and $100000$ number of paths. The sample density
function of $\log(I_{T})$ is computed from the sample histogram using
Gaussian kernel interpolation.

\begin{figure}[H]

\centering{}\includegraphics[scale=0.5]{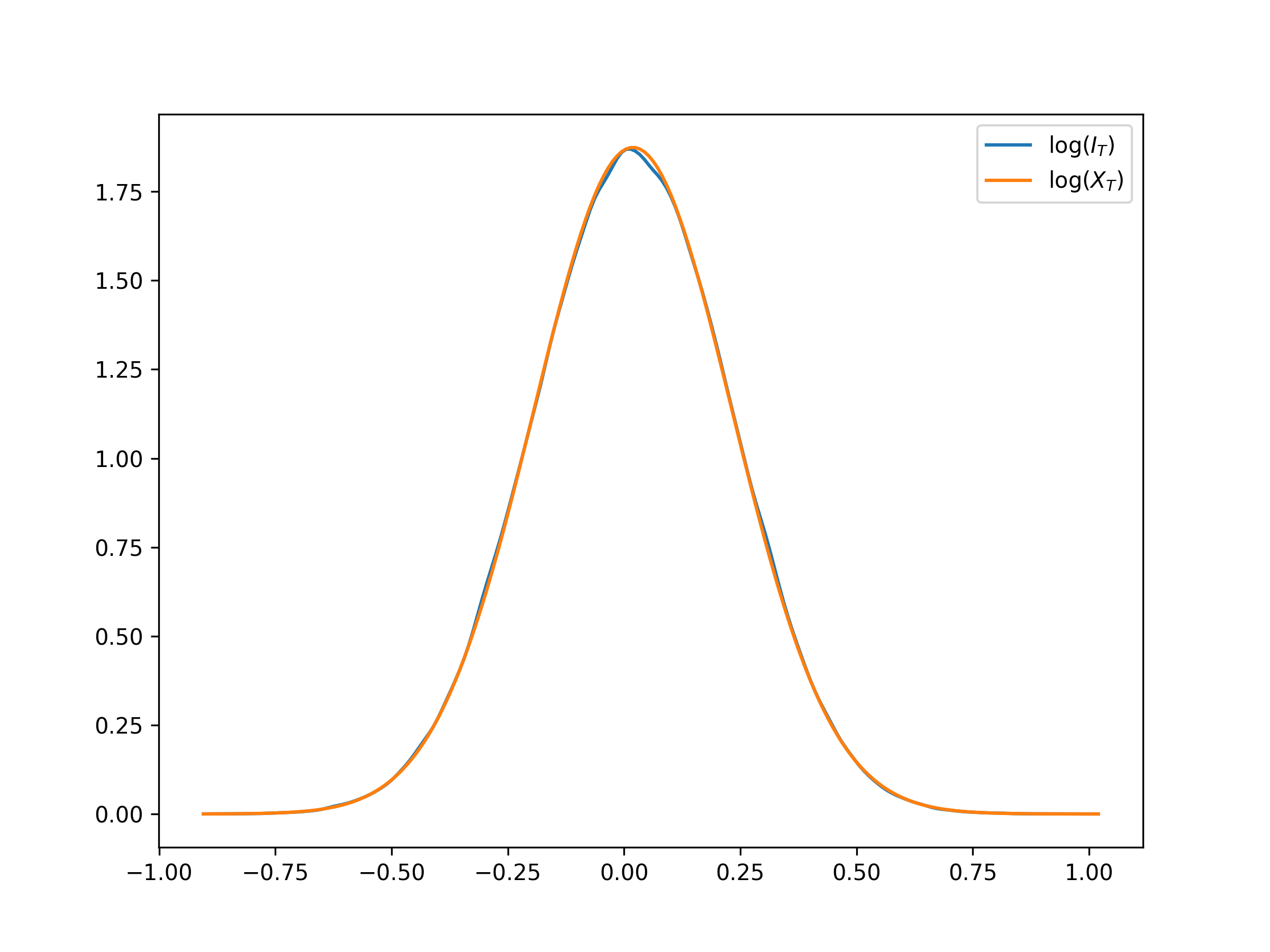}\caption{Approximation of sample density. Parameters used are $T=1.0$, $\lambda=0.9$,
$\sigma=0.5$, $v_{0}=0.02$, $r=0.05$, $\rho=0.03$, $S_{0}=I_{0}=1$,
and $\bar{\sigma}=0.2$}
\label{fig:-1}
\end{figure}

Figure \ref{fig:-2} shows the sample volatility of $\log(I_{I})$
against the limiting volatility $\bar{\sigma}\sqrt{V(\lambda)T}$
for a range of different $\lambda>0$. The solid curves correspond
to number of time steps $N=1000,2000,5000,10000,50000$ respectively\footnote{Assuming $6$ trading hours per day and $252$ trading days per year,
these settings amount to rebalancing time steps being $1.5$ hours,
$45$ minutes, $18$ minutes, $9$ minutes, and 2 minutes respectively.}. The parameters we use are $T=1.0$, $\sigma=0.5$, $v_{0}=0.02$,
$r=0.05$, $\rho=0.03$, $S_{0}=I_{0}=1$, and $\bar{\sigma}=0.2$.
For Monte Carlo simulation of $I_{T}$, we use $100000$ number of
paths. Note that Figure \ref{fig:-2} confirms the conclusion of Lemma
\ref{lem:-8} on the need of larger $N$ to achieve a prescribed volatility
targeting quality when $\lambda$ gets closer to $1$. 

\begin{figure}[H]
\centering{}\includegraphics[scale=0.5]{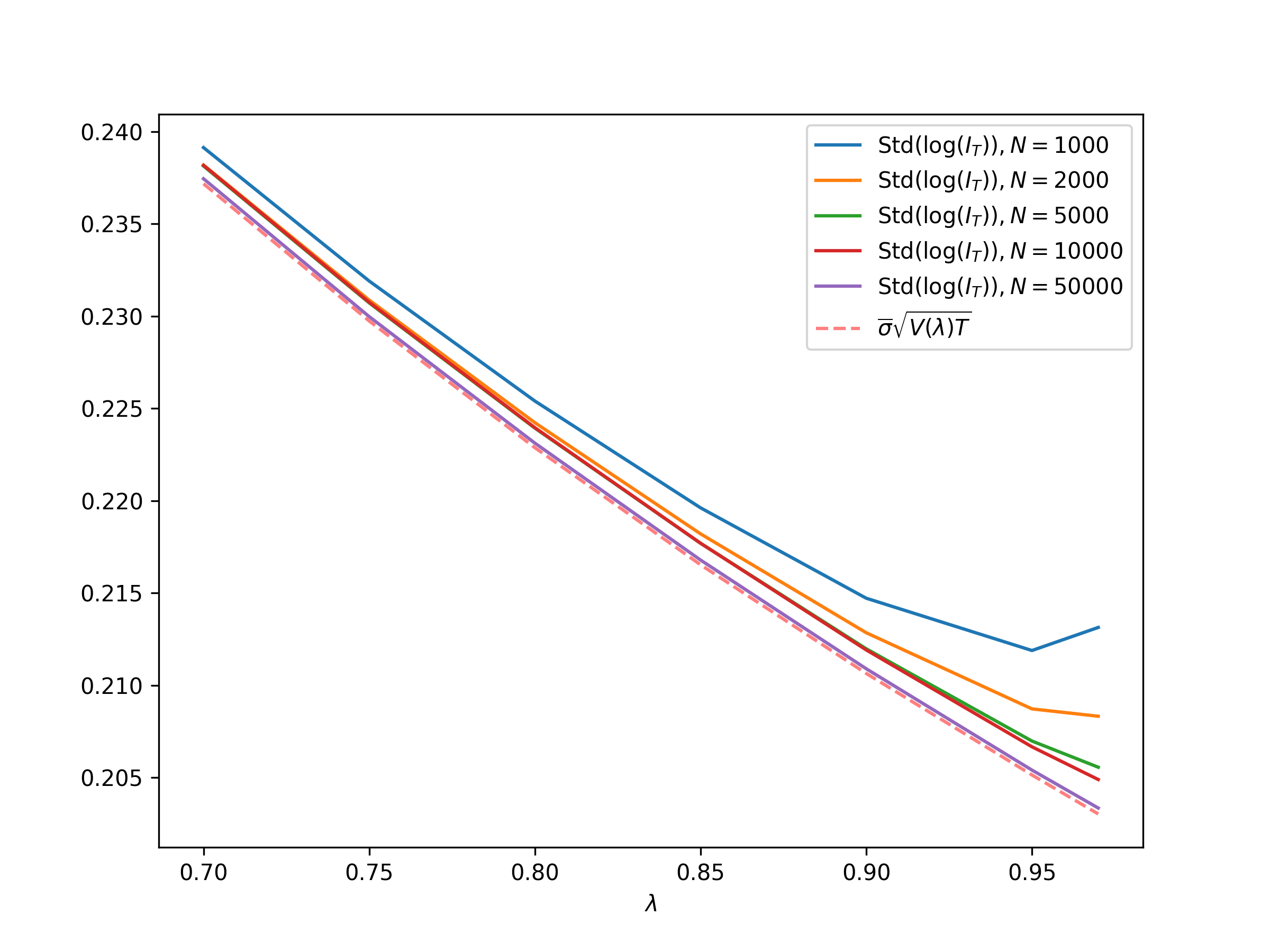}\caption{Convergence of volatility for different $\lambda$ ($x$-axis). Parameters
used are $T=1.0$, $\sigma=0.5$, $v_{0}=0.02$, $r=0.05$, $\rho=0.03$,
$S_{0}=I_{0}=1$, and $\bar{\sigma}=0.2$}
\label{fig:-2}
\end{figure}

Figure \ref{fig:-3} compares the value of a European call option
on a volatility target index with a range of $\lambda$. The curves
in Figure \ref{fig:-3} are computed using Monte Carlo method with
$100000$ sample paths to simulate volatility target index with $N=1000,2000,5000,10000,50000$
time steps, and using the Black--Scholes formula with the limiting
diffusion (\ref{eq:-39}). The parameters for the volatility target
index are $T=1.0$, $\sigma=0.5$, $v_{0}=0.02$, $r=0.05$, $\rho=0.03$,
$S_{0}=I_{0}=1$, and $\bar{\sigma}=0.2$. The strike of the call
option is $K=1.0$.

\begin{figure}[H]
\centering{}\includegraphics[scale=0.5]{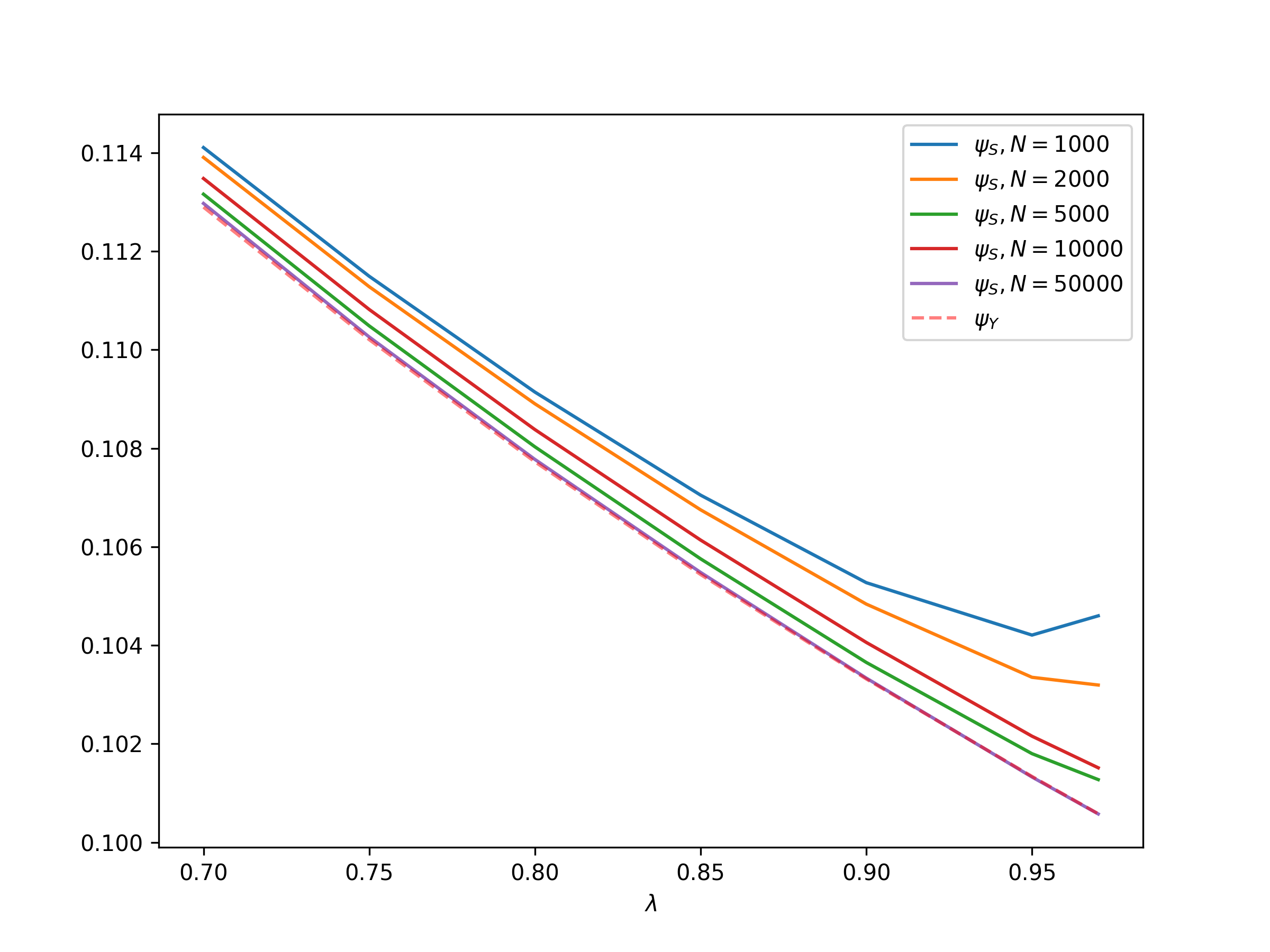}\caption{Convergence of European call option price for different $\lambda$
($x$-axis). Parameters used are $T=1.0$, $\sigma=0.5$, $v_{0}=0.02$,
$r=0.05$, $\rho=0.03$, $S_{0}=I_{0}=1$, and $\bar{\sigma}=0.2$}
\label{fig:-3}
\end{figure}

Figure \ref{fig:-4} compares the vega of a European call option on
a volatility target index with a range of $\lambda$. The curves in
Figure \ref{fig:-4} are computed using Monte Carlo method with $100000$
sample paths to simulate volatility target index with $N=1000,2000,5000,10000,50000$
time steps and $\Delta\sigma=0.001$, and using the Black--Scholes
formula and the vega conversion formula (\ref{eq:-40}). The parameters
for the volatility target index are $T=1.0$, $\sigma=0.5$, $v_{0}=0.02$,
$r=0.05$, $\rho=0.03$, $S_{0}=I_{0}=1$, and $\bar{\sigma}=0.2$.
The strike of the call option is $K=1.0$.

\begin{figure}[H]
\centering{}\includegraphics[scale=0.5]{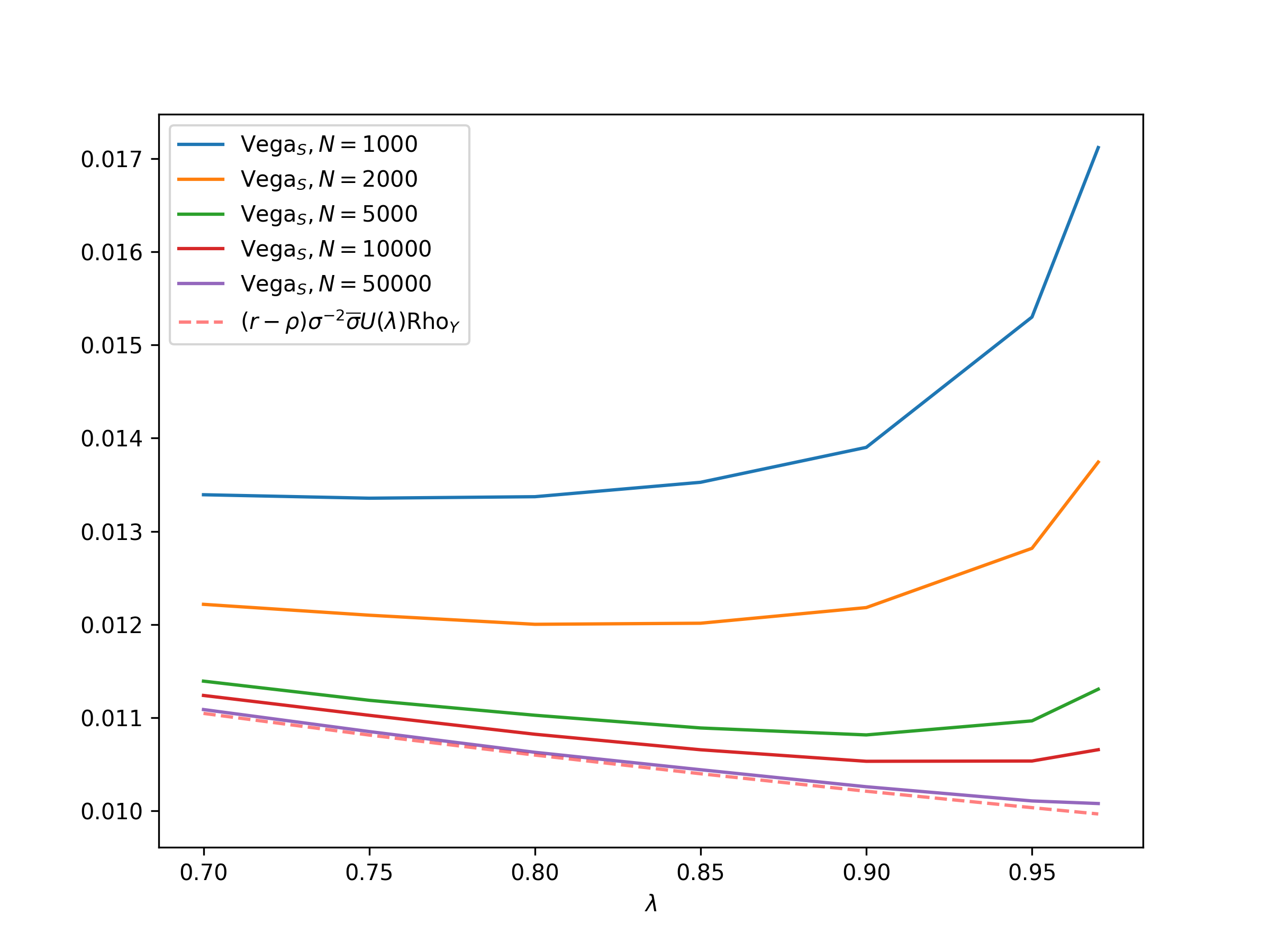}\caption{Convergence of European call option vega for different $\lambda$
($x$-axis). Parameters used are $T=1.0$, $\sigma=0.5$, $v_{0}=0.02$,
$r=0.05$, $\rho=0.03$, $S_{0}=I_{0}=1$, and $\bar{\sigma}=0.2$}
\label{fig:-4}
\end{figure}

\section{\label{sec:-4}Appendix}

In this appendix, we show that $\lim_{N\to\infty}[\log(I_{T}^{(N)})-\log(\tilde{I}_{T}^{(N)})]=0$
in distribution, and hence the definitions (\ref{eq:-86}) and (\ref{eq:-1})
are equivalent when $\Delta t\to0$. We start with some estimates
on the probability of events where remainders of Taylor expansions
can be controlled.
\begin{lem}
\label{lem:}Let $\{\xi_{n}^{(N)}\}_{1\le n\le N}$ be an array of
random variables such that
\begin{equation}
\lim_{N\to\infty}\sum_{n=1}^{N}\mathbb{E}(|\xi_{n}^{(N)}|^{p})=0,\label{eq:-87}
\end{equation}
for some $p>0$. Then, for any $\epsilon>0$,
\begin{equation}
\lim_{N\to\infty}\mathbb{P}\Big(\bigcap_{n=1}^{N}\big\{|\xi_{n}^{(N)}|\le\epsilon\big\}\Big)=1.\label{eq:-90}
\end{equation}
\end{lem}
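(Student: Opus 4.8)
The plan is to pass to complements and apply a union bound together with Markov's inequality. Writing $A_N=\bigcap_{n=1}^{N}\{|\xi_n^{(N)}|\le\epsilon\}$, the complement is $A_N^c=\bigcup_{n=1}^{N}\{|\xi_n^{(N)}|>\epsilon\}$, so it suffices to show $\mathbb{P}(A_N^c)\to0$ as $N\to\infty$. First I would bound $\mathbb{P}(A_N^c)$ by subadditivity of the measure:
\[
\mathbb{P}(A_N^c)\le\sum_{n=1}^{N}\mathbb{P}\big(|\xi_n^{(N)}|>\epsilon\big).
\]

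Next, since $p>0$ and $\epsilon>0$, the event $\{|\xi_n^{(N)}|>\epsilon\}$ coincides with $\{|\xi_n^{(N)}|^{p}>\epsilon^{p}\}$, and Markov's inequality applied to the non-negative random variable $|\xi_n^{(N)}|^{p}$ gives $\mathbb{P}(|\xi_n^{(N)}|>\epsilon)\le\epsilon^{-p}\,\mathbb{E}(|\xi_n^{(N)}|^{p})$. Summing over $n$,
\[
\mathbb{P}(A_N^c)\le\epsilon^{-p}\sum_{n=1}^{N}\mathbb{E}\big(|\xi_n^{(N)}|^{p}\big).
\]
By the hypothesis (\ref{eq:-87}), the right-hand side tends to $0$ as $N\to\infty$ (the factor $\epsilon^{-p}$ is a fixed constant), hence $\mathbb{P}(A_N^c)\to0$ and therefore $\mathbb{P}(A_N)=1-\mathbb{P}(A_N^c)\to1$, which is (\ref{eq:-90}).

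There is no real obstacle here: the only points worth stating carefully are that the number of terms $N$ in the union grows with the parameter, so a naive per-term estimate would not suffice, but this is exactly why the summability assumption (\ref{eq:-87}) is the right hypothesis — it controls the entire growing sum at once. The argument uses nothing beyond finite subadditivity and Markov's inequality and is uniform in the structure of the array, so it applies verbatim in each later invocation of the lemma where $\xi_n^{(N)}$ is taken to be a Taylor remainder term whose $p$-th moments are shown to be summable.
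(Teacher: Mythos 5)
Your proof is correct and is essentially identical to the paper's: both pass to the complement, use the union bound, and apply Markov's (Chebyshev's) inequality to $|\xi_n^{(N)}|^{p}$ so that the hypothesis (\ref{eq:-87}) kills the whole sum. Nothing further is needed.
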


\begin{proof}
By Chebyshev's inequality,
\[
\begin{aligned}\mathbb{P}\Big(\bigcup_{n=1}^{N}\big\{|\xi_{n}^{(N)}|>\epsilon\big\}\Big)\le\sum_{n=1}^{N}\mathbb{P}(|\xi_{n}^{(N)}|>\epsilon)\le\epsilon^{-p}\sum_{n=1}^{N}\mathbb{E}(|\xi_{n}^{(N)}|^{p}).\end{aligned}
\]
By (\ref{eq:-87}), we obtain that
\[
\lim_{N\to\infty}\mathbb{P}\Big(\bigcup_{n=1}^{N}\big\{|\xi_{n}^{(N)}|>\epsilon\big\}\Big)=0,
\]
which implies (\ref{eq:-90}).
\end{proof}
Let $r_{n}^{(N)}=\frac{1}{\Delta t}\int_{t_{n-1}}^{t_{n}}r(t)dt$,
$\rho_{n}^{(N)}=\frac{1}{\Delta t}\int_{t_{n-1}}^{t_{n}}\rho(t)dt$,
$(\sigma_{n}^{(N)})^{2}=\frac{1}{\Delta t}\int_{t_{n-1}}^{t_{n}}\sigma(t)^{2}dt$,
and 
\begin{align}
Z_{n}^{(N)} & =\frac{1}{\sigma_{n}\sqrt{\Delta t}}\int_{t_{n-1}}^{t_{n}}\sigma(t)dW_{t},\label{eq:-99}\\
R_{n}^{(N)} & =e^{(\rho_{n}-\frac{1}{2}\sigma_{n}^{2})\Delta t+\sigma_{n}\sqrt{\Delta t}Z_{n}}-1.\label{eq:-103}
\end{align}
Then, for each $N$, $\{Z_{n}^{(N)}\}_{n<N}$ are i.i.d. standard
normal distributions. 
\begin{lem}
\label{lem:-7}Let $Z_{n}^{(N)}$ be defined in (\ref{eq:-99}). For
any $\theta>0$ and any $\epsilon>0$,
\begin{align}
\lim_{N\to\infty}\mathbb{P}\Big(\bigcap_{n=1}^{N}\big\{(\Delta t)^{\theta}|Z_{n}^{(N)}|\le\epsilon\big\}\Big) & =1,\label{eq:-104}\\
\lim_{N\to\infty}\mathbb{P}\Big(\bigcap_{n=1}^{N}\big\{(\Delta t)^{\theta}w_{n-1}^{(N)}\le\epsilon\big\}\Big) & =1,\label{eq:-105}
\end{align}
and
\begin{equation}
\lim_{N\to\infty}\mathbb{P}\Big(\bigcap_{n=1}^{N}\big\{(\Delta t)^{\theta}w_{n-1}^{(N)}|Z_{n}^{(N)}|\le\epsilon\big\}\Big)=1.\label{eq:-106}
\end{equation}
\end{lem}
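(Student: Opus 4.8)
The plan is to obtain all three assertions as direct applications of Lemma~\ref{lem:}, the only work being to verify its hypothesis (\ref{eq:-87}) with a suitably chosen exponent $p$. Fix $\theta>0$ and $\epsilon>0$, and choose any $p>1/\theta$, so that $1-\theta p<0$. Since $\Delta t=T/N$, if $\{\xi_n^{(N)}\}$ is an array with $(\Delta t)^{-\theta p}\,\xi_n^{(N)}$ having uniformly bounded $p$-th absolute moment, say $\sup_{N\ge1}\sup_{n<N}\mathbb{E}\big(|(\Delta t)^{-\theta}\xi_n^{(N)}|^p\big)\le C$, then
\[
\sum_{n=1}^{N}\mathbb{E}\big(|\xi_n^{(N)}|^{p}\big)\le C\,T^{\theta p}\,N^{1-\theta p}\longrightarrow 0 \quad(N\to\infty),
\]
and Lemma~\ref{lem:} yields $\mathbb{P}\big(\bigcap_{n=1}^{N}\{|\xi_n^{(N)}|\le\epsilon\}\big)\to1$. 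So it remains to exhibit such a uniform moment bound in each of the three cases.

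For (\ref{eq:-104}), take $\xi_n^{(N)}=(\Delta t)^{\theta}Z_n^{(N)}$. By (\ref{eq:-99}) each $Z_n^{(N)}$ is a standard normal variable, so $\mathbb{E}(|Z_n^{(N)}|^{p})$ is the (finite) $p$-th absolute moment of a Gaussian, depending only on $p$; the uniform bound is immediate. For (\ref{eq:-105}), take $\xi_n^{(N)}=(\Delta t)^{\theta}w_{n-1}^{(N)}=(\Delta t)^{\theta}\bar\sigma\,(v_{n-1}^{(N)})^{-1/2}$; applying (\ref{eq:-33}) of Lemma~\ref{lem:-5} with $h(x)=x^{-p/2}$ gives $\sup_{N\ge1}\sup_{n<N}\mathbb{E}\big[(v_{n-1}^{(N)})^{-p/2}\big]<\infty$, hence $\sup_{N\ge1}\sup_{n<N}\mathbb{E}\big(|w_{n-1}^{(N)}|^{p}\big)<\infty$, as required.

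For (\ref{eq:-106}), take $\xi_n^{(N)}=(\Delta t)^{\theta}w_{n-1}^{(N)}|Z_n^{(N)}|$. Here the key observation is that $w_{n-1}^{(N)}$ is $\mathcal{F}_{t_{n-1}}$-measurable, since by (\ref{eq:-98}) and (\ref{eq:-36}) it is a function of $v_{n-1}^{(N)}$, which depends only on the Brownian increments up to time $t_{n-1}$, whereas $Z_n^{(N)}$, being a normalized increment of $W$ over $[t_{n-1},t_n]$, is independent of $\mathcal{F}_{t_{n-1}}$. Consequently $\mathbb{E}(|\xi_n^{(N)}|^{p})=(\Delta t)^{\theta p}\,\mathbb{E}(|w_{n-1}^{(N)}|^{p})\,\mathbb{E}(|Z_n^{(N)}|^{p})$, and the required uniform bound follows by combining the estimates of the two previous cases. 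Invoking Lemma~\ref{lem:} a last time finishes the proof. There is no genuine obstacle: the only non-routine ingredient is the uniform negative-moment bound (\ref{eq:-33}) for $v_n^{(N)}$, which is already in hand, together with the elementary independence/factorization used in the third case; the one point to keep straight is that $p$ must be chosen \emph{after} $\theta$, large enough that $\theta p>1$.
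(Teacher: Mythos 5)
Your proof is correct and follows essentially the same route as the paper: verify the hypothesis of Lemma \ref{lem:} by choosing an exponent with $\theta p>1$ (the paper takes the specific values $p=1+2/\theta$ or $p=2/\theta$, which is an inessential difference), using Gaussian moments for $Z_n^{(N)}$, the uniform negative-moment bound (\ref{eq:-33}) for $w_{n-1}^{(N)}$, and the independence of $w_{n-1}^{(N)}$ and $Z_n^{(N)}$ to factor the expectation in the third case. The only blemish is the typo ``$(\Delta t)^{-\theta p}\,\xi_n^{(N)}$'' where you mean $(\Delta t)^{-\theta}\xi_n^{(N)}$, which your displayed condition already states correctly.
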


\begin{proof}
The limit (\ref{eq:-104}) follows from
\[
\sum_{n=1}^{N}\mathbb{E}\big(|(\Delta t)^{\theta}Z_{n}^{(N)}|^{1+2/\theta}\big)\le c_{T}N^{-1-\theta}
\]
and Lemma \ref{lem:}. By (\ref{eq:-33}), 
\begin{equation}
A_{p}=\sup_{N\ge1}\sup_{n<N}\mathbb{E}(|w_{n}^{(N)}|^{p})<\infty,\quad p>0.\label{eq:-100}
\end{equation}
Therefore,
\[
\sum_{n=1}^{N}\mathbb{E}\big(|(\Delta t)^{\theta}w_{n-1}^{(N)}|^{2/\theta}\big)\le c_{\theta,T}N^{-1},
\]
which, together with Lemma \ref{lem:}, implies (\ref{eq:-105}).
Similarly,
\[
\sum_{n=1}^{N}\mathbb{E}\big(|(\Delta t)^{\theta}w_{n-1}^{(N)}Z_{n}^{(N)}|^{1+2/\theta}\big)=(\Delta t)^{2+\theta}\sum_{n=1}^{N}\mathbb{E}\big(|w_{n-1}^{(N)}|^{1+2/\theta}\big)\mathbb{E}\big(|Z_{n}^{(N)}|^{1+2/\theta}\big)\le c_{\theta,T}N^{-1-\theta}.
\]
The limit (\ref{eq:-106}) follows from the above and Lemma \ref{lem:}.
\end{proof}
\begin{prop}
\label{prop:}Let $I_{T}^{(N)}$ and $\tilde{I}_{T}^{(N)}$ be defined
by (\ref{eq:-1}) and (\ref{eq:-86}) respectively. Then
\[
\lim_{N\to\infty}\big[\log(\tilde{I}_{T}^{(N)})-\log(I_{T}^{(N)})\big]=0,
\]
in distribution. 
\end{prop}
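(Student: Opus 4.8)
The plan is to expand both log-returns over the $N$ sub-intervals and to compare them term by term on an event of probability tending to $1$. On $[t_{n-1},t_{n})$ the process $I^{(N)}$ solves a linear SDE whose coefficients are constant in $t$ and $\mathcal{F}_{t_{n-1}}$-measurable, so with the notation $r_{n},\rho_{n},\sigma_{n},Z_{n}^{(N)},R_{n}^{(N)}$ of (\ref{eq:-99})--(\ref{eq:-103}) one has
\[
\log\frac{I_{t_{n}}^{(N)}}{I_{t_{n-1}}^{(N)}}=\big[(1-w_{n-1}^{(N)})r_{n}+w_{n-1}^{(N)}\rho_{n}\big]\Delta t-\tfrac12(w_{n-1}^{(N)})^{2}\sigma_{n}^{2}\Delta t+w_{n-1}^{(N)}\sigma_{n}\sqrt{\Delta t}\,Z_{n}^{(N)},
\]
while (\ref{eq:-86}) gives $\log(\tilde I_{t_{n}}^{(N)}/\tilde I_{t_{n-1}}^{(N)})=\log(1+x_{n})$ with $x_{n}=(1-w_{n-1}^{(N)})r_{n}\Delta t+w_{n-1}^{(N)}R_{n}^{(N)}$. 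I would work on the event $\Omega_{N}=\Omega_{N}(\epsilon)$ on which $(\Delta t)^{1/4}|Z_{n}^{(N)}|\le\epsilon$, $(\Delta t)^{1/4}w_{n-1}^{(N)}\le\epsilon$ and $(\Delta t)^{1/4}w_{n-1}^{(N)}|Z_{n}^{(N)}|\le\epsilon$ for every $n\le N$; by Lemma~\ref{lem:-7}, $\mathbb{P}(\Omega_{N})\to1$, and on $\Omega_{N}$, for all large $N$, one has $|x_{n}|\le\tfrac12$ and $|(\rho_{n}-\tfrac12\sigma_{n}^{2})\Delta t+\sigma_{n}\sqrt{\Delta t}Z_{n}^{(N)}|\le c$ uniformly in $n$. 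In particular $\tilde I_{T}^{(N)}>0$ on $\Omega_{N}$, so $\log\tilde I_{T}^{(N)}$ is well defined there and the Taylor expansions below are legitimate.

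The central step is to Taylor-expand $\log(1+x_{n})$ to second order and $R_{n}^{(N)}=e^{y_{n}}-1=y_{n}+\tfrac12 y_{n}^{2}+O(|y_{n}|^{3})$, with $y_{n}=(\rho_{n}-\tfrac12\sigma_{n}^{2})\Delta t+\sigma_{n}\sqrt{\Delta t}Z_{n}^{(N)}$, and collect powers of $\sqrt{\Delta t}$. The order-$\sqrt{\Delta t}$ term of $\log(1+x_{n})$ is exactly $w_{n-1}^{(N)}\sigma_{n}\sqrt{\Delta t}Z_{n}^{(N)}$ and cancels the stochastic part of the $I^{(N)}$-increment; collecting the order-$\Delta t$ terms — the $\tfrac12\sigma_{n}^{2}(Z_{n}^{(N)})^{2}$ contribution coming from $R_{n}^{(N)}$, together with the $-\tfrac12(w_{n-1}^{(N)})^{2}\sigma_{n}^{2}$ that appears in both the It\^o correction of the $I^{(N)}$-increment and the $-\tfrac12 x_{n}^{2}$ term of $\log(1+x_{n})$ — leaves exactly
\[
\log\frac{\tilde I_{t_{n}}^{(N)}}{\tilde I_{t_{n-1}}^{(N)}}-\log\frac{I_{t_{n}}^{(N)}}{I_{t_{n-1}}^{(N)}}=\tfrac12\sigma_{n}^{2}\big(w_{n-1}^{(N)}-(w_{n-1}^{(N)})^{2}\big)\big((Z_{n}^{(N)})^{2}-1\big)\Delta t+\mathcal{R}_{n},
\]
where on $\Omega_{N}$ the remainder obeys $|\mathcal{R}_{n}|\le c(\Delta t)^{3/2}(1+(w_{n-1}^{(N)})^{3})(1+|Z_{n}^{(N)}|^{3})$, obtained by collecting the cubic remainder of $\log(1+x_{n})$, the cross and higher terms of $\tfrac12 x_{n}^{2}$, and $w_{n-1}^{(N)}$ times the cubic remainder of $e^{y_{n}}$. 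Getting this identity right — in particular seeing that the surviving order-$\Delta t$ term is precisely the centred quantity above — is the main bookkeeping obstacle.

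The crucial observation is then that $\tfrac12\sigma_{n}^{2}(w_{n-1}^{(N)}-(w_{n-1}^{(N)})^{2})((Z_{n}^{(N)})^{2}-1)\Delta t$ is a martingale difference: $w_{n-1}^{(N)}$ is $\mathcal{F}_{t_{n-1}}$-measurable, $Z_{n}^{(N)}$ is independent of $\mathcal{F}_{t_{n-1}}$, and $\mathbb{E}[(Z_{n}^{(N)})^{2}]=1$. Hence, writing $M_{N}$ for the sum of these terms over $n\le N$ (defined on all of $\Omega$),
\[
\mathbb{E}[M_{N}^{2}]=\sum_{n=1}^{N}\tfrac14\sigma_{n}^{4}\,\mathbb{E}\big[(w_{n-1}^{(N)}-(w_{n-1}^{(N)})^{2})^{2}\big]\,\mathbb{E}\big[((Z_{n}^{(N)})^{2}-1)^{2}\big](\Delta t)^{2}\le cN(\Delta t)^{2}=cT\Delta t\to0,
\]
using $\mathbb{E}[(w_{n-1}^{(N)}-(w_{n-1}^{(N)})^{2})^{2}]\le2\mathbb{E}[(w_{n-1}^{(N)})^{2}]+2\mathbb{E}[(w_{n-1}^{(N)})^{4}]$, which is bounded uniformly in $n,N$ by (\ref{eq:-100}) (equivalently (\ref{eq:-33}) in Lemma~\ref{lem:-5}); note that a naive $L^{1}$ bound of these terms is only $O(1)$, so the centring is essential here. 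For the remainder, (\ref{eq:-100}) and the Gaussian moments of $Z_{n}^{(N)}$ give $\mathbb{E}\big[\sum_{n=1}^{N}|\mathcal{R}_{n}|\chi_{\Omega_{N}}\big]\le c(\Delta t)^{3/2}\sum_{n=1}^{N}\mathbb{E}[1+(w_{n-1}^{(N)})^{3}]\,\mathbb{E}[1+|Z_{n}^{(N)}|^{3}]\le cT\sqrt{\Delta t}\to0$, so $\sum_{n=1}^{N}|\mathcal{R}_{n}|\chi_{\Omega_{N}}\to0$ in $L^{1}$ and hence in probability. Combining everything: on $\Omega_{N}$ we have $|\log\tilde I_{T}^{(N)}-\log I_{T}^{(N)}|\le|M_{N}|+\sum_{n=1}^{N}|\mathcal{R}_{n}|\chi_{\Omega_{N}}$, so for any $\delta>0$,
\[
\mathbb{P}\big(|\log\tilde I_{T}^{(N)}-\log I_{T}^{(N)}|>\delta\big)\le\mathbb{P}(\Omega_{N}^{c})+\mathbb{P}\big(|M_{N}|>\tfrac\delta2\big)+\mathbb{P}\Big(\sum_{n=1}^{N}|\mathcal{R}_{n}|\chi_{\Omega_{N}}>\tfrac\delta2\Big),
\]
and each term on the right tends to $0$ by Lemma~\ref{lem:-7}, the $L^{2}$ bound on $M_{N}$, and Chebyshev applied to the $L^{1}$ bound on the remainder. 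Thus $\log\tilde I_{T}^{(N)}-\log I_{T}^{(N)}\to0$ in probability, and a fortiori in distribution. The two places requiring care are the exact term-matching in the Taylor expansion and the recognition that the leftover order-$\Delta t$ term has to be treated as a martingale difference rather than in $L^{1}$; the uniform negative-moment bounds of Lemma~\ref{lem:-5} are exactly what make both the martingale estimate and the remainder estimate close.
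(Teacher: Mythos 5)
Your proposal is correct and follows essentially the same route as the paper: restrict to a high-probability event supplied by Lemma~\ref{lem:-7}, Taylor-expand $\log(1+x_{n})$ and $e^{y_{n}}$ so that the only surviving order-$\Delta t$ term is the centred quantity $\tfrac12 w_{n-1}^{(N)}(1-w_{n-1}^{(N)})\sigma_{n}^{2}\Delta t\big((Z_{n}^{(N)})^{2}-1\big)$, kill it via the martingale $L^{2}$ estimate together with the uniform moment bounds (\ref{eq:-100}), and absorb the rest into an $O((\Delta t)^{3/2})$ remainder. Your conclusion via convergence in probability (rather than the paper's bounded-continuous-test-function argument) is a harmless, slightly cleaner variant of the same final step.
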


\begin{proof}
Clearly,
\begin{align*}
\log(\tilde{I}_{T}^{(N)}) & =\sum_{n=1}^{N}\log[1+(1-w_{n-1}^{(N)})r_{n}\Delta t+w_{n-1}^{(N)}R_{n}^{(N)}],\\
\log(I_{T}^{(N)}) & =\sum_{n=1}^{N}\Big(r_{n}^{(N)}+w_{n-1}^{(N)}(\rho_{n}^{(N)}-r_{n}^{(N)})-\frac{1}{2}(w_{n-1}^{(N)}\sigma_{n}^{(N)})^{2}\Big)\Delta t+\sum_{n=1}^{N}w_{n-1}^{(N)}\sigma_{n}^{(N)}\sqrt{\Delta t}Z_{n}^{(N)}.
\end{align*}
Let $M=1+\Vert r\Vert_{L^{\infty}}+\Vert\rho\Vert_{L^{\infty}}+\Vert\sigma^{2}\Vert_{L^{\infty}}<\infty$,
and let $\epsilon\in(0,1)$ be such that
\[
E_{N}=\bigcap_{n=1}^{N}\big(\big\{\Delta t|w_{n-1}^{(N)}|\le\epsilon\big\}\cap\big\{\sqrt{\Delta t}w_{n-1}^{(N)}|Z_{n}^{(N)}|\le\epsilon\big\}\cap\big\{\sqrt{\Delta t}|Z_{n}^{(N)}|\le\epsilon\big\}\big).
\]
Then, by Lemma \ref{lem:-7},
\begin{equation}
\lim_{N\to\infty}\mathbb{P}(E_{N})=1.\label{eq:-96}
\end{equation}
In what follows, to simplify notations, we will suppress the superscript
and denote $w_{n}^{(N)}$, $r_{n}^{(N)}$, $\rho_{n}^{(N)}$, $\sigma_{n}^{(N)}$,
$Z_{n}^{(N)}$ by $w_{n}$, $r_{n}$, $\rho_{n}$, $\sigma_{n}$,
$Z_{n}$ respectively whenever no confusion occurs. In view of Lemma
\ref{lem:-7}, the uniform $L^{p}$ bound (\ref{eq:-100}), and
\begin{align*}
\big|e^{x}-1-x-\frac{1}{2}x^{2}\big| & \le c|x|^{3},\quad|x|\le\frac{1}{2},\\
\Big|\log(1+x)-x+\frac{1}{2}x^{2}\Big| & \le c|x|^{3},\quad|x|\le\frac{1}{2},
\end{align*}
by choosing $\epsilon>0$ sufficiently small, we have
\[
\begin{aligned} & \log(\tilde{I}_{T}^{(N)})\chi_{E_{N}}\\
 & =\big[(1-w_{n-1})r_{n}\Delta t+w_{n-1}R_{n}-\frac{1}{2}[(1-w_{n-1})r_{n}\Delta t+w_{n-1}R_{n}]^{2}\big]\chi_{E_{N}}+\text{O}((\Delta t)^{3/2})\\
 & =\Big[(1-w_{n-1})r_{n}\Delta t+w_{n-1}\Big((\rho_{n}-\frac{1}{2}\sigma_{n}^{2})\Delta t+\sigma_{n}\sqrt{\Delta t}Z_{n}+\frac{1}{2}\sigma_{n}^{2}\Delta tZ_{n}^{2}\Big)\\
 & \quad-\frac{1}{2}w_{n-1}^{2}\sigma_{n}^{2}\Delta tZ_{n}^{2}\Big]\chi_{E_{N}}+\text{O}((\Delta t)^{3/2})\\
 & =\Big[\Big(r_{n}\Delta t+w_{n-1}(\rho_{n}-r_{n})-\frac{1}{2}w_{n-1}^{2}\sigma_{n}^{2}\Big)\Delta t+w_{n-1}\sigma_{n}\sqrt{\Delta t}Z_{n}\\
 & \quad+\frac{1}{2}w_{n-1}(1-w_{n-1})\sigma_{n}^{2}\Delta t(Z_{n}^{2}-1)\Big]\chi_{E_{N}}+\text{O}((\Delta t)^{3/2})\\
 & =\Big(\log(I_{T}^{(N)})+\frac{1}{2}w_{n-1}(1-w_{n-1})\sigma_{n}^{2}\Delta t(Z_{n}^{2}-1)\Big)\chi_{E_{N}}+\text{O}((\Delta t)^{3/2}),
\end{aligned}
\]
where we denote $\xi_{n}^{(N)}=\text{O}((\Delta t)^{\theta})$ if
\[
\lim_{N\to\infty}\sup_{n<N}(\Delta t)^{-\theta}\mathbb{E}\big(|\xi_{n}^{(N)}|\big)<\infty.
\]
Therefore,
\begin{equation}
\lim_{N\to\infty}\Big(\log(\tilde{I}_{T}^{(N)})-\log(I_{T}^{(N)})-\frac{1}{2}\sum_{n=1}^{N}w_{n-1}(1-w_{n-1})\sigma_{n}^{2}\Delta t(Z_{n}^{2}-1)\Big)\chi_{E_{N}}=0,\label{eq:-110}
\end{equation}
in distribution. Note that $\sum_{k=1}^{n}w_{k-1}(1-w_{k-1})\sigma_{k}^{2}\Delta t(Z_{k}^{2}-1)$
is a martingale, and hence,
\[
\begin{aligned}\mathbb{E}\Big[\Big(\sum_{n=1}^{N}w_{n-1}(1-w_{n-1})\sigma_{n}^{2}\Delta t(Z_{n}^{2}-1)\Big)^{2}\Big] & \le c_{M,T}N^{-2}\sum_{n=1}^{N}\mathbb{E}\Big[\Big(w_{n-1}^{2}(1-w_{n-1})^{2}(Z_{n}^{2}-1)\Big)^{2}\Big]\\
 & =c_{M,T}N^{-2}\sum_{n=1}^{N}\mathbb{E}\Big[\Big(w_{n-1}^{2}(1-w_{n-1})^{2}\Big)^{2}\Big]\\
 & \le c_{M,T}N^{-2}\sum_{n=1}^{N}\big[\mathbb{E}(w_{n-1}^{4})+\mathbb{E}(w_{n-1}^{8})\big],
\end{aligned}
\]
which, together with (\ref{eq:-100}), implies that
\begin{equation}
\lim_{N\to\infty}\sum_{n=1}^{N}w_{n-1}(1-w_{n-1})\sigma_{n}^{2}\Delta t(Z_{n}^{2}-1)=0,\label{eq:-111}
\end{equation}
in distribution. It follows from (\ref{eq:-110}) and (\ref{eq:-111})
that
\begin{equation}
\lim_{N\to\infty}\big[\log(\tilde{I}_{T}^{(N)})-\log(I_{T}^{(N)})\big]\chi_{E_{N}}=0,\label{eq:-97}
\end{equation}
in distribution. 

For any continuous function $f\in C(\mathbb{R})\cap L^{\infty}(\mathbb{R})$,
we have 
\[
\big|\mathbb{E}\big[f(\log(\tilde{I}_{T}^{(N)}))-f(\log(I_{T}^{(N)}))\big]\big|\le\big|\mathbb{E}\big[f\big(\log(\tilde{I}_{T}^{(N)})\chi_{E_{N}}\big)-f\big(\log(I_{T}^{(N)})\chi_{E_{N}}\big)\big]\big|+2\Vert f\Vert_{L^{\infty}}(1-\mathbb{P}(E_{N})).
\]
It follows from (\ref{eq:-96}), (\ref{eq:-97}), and the above that
\[
\lim_{N\to\infty}\mathbb{E}\big[f(\log(\tilde{I}_{T}^{(N)}))-f(\log(I_{T}^{(N)}))\big]=0.
\]
This completes the proof.
\end{proof}

\end{document}